\documentclass[a4paper, reqno]{amsart}
\usepackage{amsmath, amsthm, amssymb, amstext}

\usepackage[left=2.3cm,right=2.3cm,top=2.3cm,bottom=2.3cm]{geometry}

\usepackage{hyperref,xcolor}
\usepackage[hyperpageref]{backref}
\hypersetup{pdfborder={0 0 0},colorlinks}

\usepackage{enumitem}
\allowdisplaybreaks
\newtheorem{theorem}{Theorem}

\newtheorem{lemma}[theorem]{Lemma}
\newtheorem{proposition}[theorem]{Proposition}
\newtheorem{corollary}[theorem]{Corollary}
\newtheorem{definition}[theorem]{Definition}
\newtheorem{example}[theorem]{Example}

\numberwithin{theorem}{section}
\numberwithin{equation}{section}

\title[On sharp singular Moser-Trudinger type inequalities]{On sharp singular Moser-Trudinger type inequalities and applications to zero mass $(p,Q)$-Laplace subelliptic equations in the Heisenberg group}
\author[G.M. Figueiredo]{Giovany M. Figueiredo}
\address[G.M. Figueiredo]{Universidade de Brasília, Departamento de Matemática, 70910-900 Brasília DF, Brazil}
\email{giovany@unb.br}
\author[D.K. Mahanta]{Deepak Kumar Mahanta}
\address[D.K. Mahanta]{Department of Mathematics, Indian Institute of Technology Jodhpur, Rajasthan 342030, India}
\email{mahanta.1@iitj.ac.in}
\author[A. Sarkar]{Abhishek Sarkar}
\address[A. Sarkar]{Department of Mathematics, Indian Institute of Technology Jodhpur, Rajasthan 342030, India}
\email{abhisheks@iitj.ac.in}

\subjclass{26D10,35A23, 35H20,35R03}
\keywords{subelliptic PDEs, the $(p,Q)$-Laplace operator, the Moser-Trudinger inequality, exponential growth}
\begin{document}
\begin{abstract}
This article establishes a sharp singular Moser-Trudinger type inequality in a new function space and explores a concentration-compactness principle in the Heisenberg group, inspired by P.\textcolor{blue}{-}L. Lions. We analyze the equivalence of sharp critical and subcritical singular Moser-Trudinger inequalities, focusing on their asymptotic behavior and connections between their suprema. Additionally, we apply the mountain pass theorem to demonstrate the existence of positive ground state solutions for zero mass $(p, Q)$-Laplace subelliptic equations with singular exponential nonlinearity for the following equation:
$$
    -\Delta_{H,p} u-\Delta_{H,Q} u=\frac{f(\xi,u)}{r(\xi)^\vartheta}\quad\text{in}\quad \mathbb{H}^n,
$$
with $1<p<Q$, $\vartheta\in(0,Q)$, $Q=2n+2$, the function $r(\cdot)$ is called the Kor\'anyi norm in $\mathbb{H}^n$ and the nonlinearity $f:\mathbb{H}^n\times \mathbb{R}\to \mathbb{R}$ is a Carath\'eodory function, which behaves like $\exp{(\alpha|s|^{\frac{Q}{Q-1}})}$ as $|s|\to~+\infty$ for some $\alpha>0$.
\end{abstract}
\maketitle
\section{Introduction and main results}\label{sec1}
In recent decades, there has been a significant interest in the study of geometric and functional inequalities in various function spaces due to a wide range of applications in harmonic analysis, geometric analysis, CR geometry, PDEs, modern physics, and other fields of mathematics. For instance, we refer the readers to study some of the exciting literature on Gagliardo–Nirenberg-Sobolev inequalities \cite{Demange-2008}, Caffarelli–Kohn–Nirenberg inequalities \cite{Caffarelli-Kohn-Nirenberg-1984}, Hardy-Littlewood-Sobolev inequalities \cite{Garofalo-Tralli-2022}, Hardy-Rellich inequalities \cite{Ghoussoub-Moradifam-2011}, Hardy-Sobolev inequalities \cite{Filippas-Maz’ya-Tertikas-2007}, Poincar\'{e} inequalities \cite{Boulkhemair-Chakib-2007, Keith-Zhong-2008}, Moser-Trudinger inequalities \cite{Beckner-1993, Cohn-Lu-2004} and Adams inequalities \cite{Adams-1988, Fontana-Morpurgo-2011}, etc. To the best of our knowledge, several authors concentrate on the stability, existence, and non-existence of extremal and attainability of best constants of such types of inequalities. In this context, we cite \cite{Catrina-Wang-2001, Chen-Lu-Tang-2024, Chern-Lin-2010, Hudson-Leckband-2004, Lu-Yang-2009, Lu-Zhu-2011, Tertikas-Zographopoulos-2007} and the references therein.

The main focus of this article is to discuss one such inequality, namely the celebrated Moser-Trudinger inequality. Generally, the existence of such types of inequalities arises in the literature due to the failure of the Sobolev embedding $W^{1,n}(\mathbb{R}^n)\hookrightarrow L^\infty(\mathbb{R}^n)$. For example, if we take $$u(x):=\psi(x)\log\bigg(\log \bigg(1+\frac{1}{|x|}\bigg)\bigg),~~\forall~\psi\in C^\infty_0(\mathbb{R}^n,[0,1])\quad\text{and}\quad n\geq 2,$$ where $\psi\equiv 1$ in $B_1(0)$ and $\psi\equiv 0$ in $ \mathbb{R}^n\setminus B_{2}(0)$, then one can easily verify $u\in W^{1,n}(\mathbb{R}^n)$ but $u\notin L^\infty(\mathbb{R}^n)$. Although, the standard Sobolev embeddings do not work, Trudinger \cite{Trudinger-1967} proved that for any bounded domain $\Omega\subset\mathbb{R}^n$ with $n\geq 2$, $W^{1,n}(\Omega)$ is continuously immersed into the Orlicz space $L_{\Phi_\alpha}(\Omega)$, where $\Phi_\alpha$ is a Young function given by $\Phi_\alpha(s)=\operatorname{\exp}\big(\alpha|s|^{\frac{n}{n-1}}\big)-1$ for some $\alpha>0$. In this direction, we would also like to cite the works of Yudovich \cite{Yudovich-1961} and Pokhozhaev \cite{Pokhozhaev-1965}. The sharpness of
the exponent $\alpha$ was first studied by Moser \cite{Moser-1970/71}. He used classical Schwarz symmetrization techniques to establish the following inequality:
\begin{align}\label{eq10.1}
    \sup_{u\in W^{1,n}_0(\Omega),~\|\nabla u\|_{L^n(\Omega)}~\leq 1}~\int_{\Omega} \operatorname{\exp}(\alpha|u|^{\frac{n}{n-1}})\,\mathrm{d}x\leq C(n)|\Omega|,~\forall~\alpha\leq \alpha_n, 
 \end{align}
 where $\alpha_n=n\omega_{n-1}^{\frac{1}{n-1}}$ and $\omega_{n-1}$ is the surface measure of the unit sphere in $\mathbb{R}^n$. Moreover, the above supremum is no longer finite if $\alpha> \alpha_n$. Inequality of this kind is currently referred to as the Moser-Trudinger type inequality. Note that the supremum \eqref{eq10.1} becomes meaningless for domains with infinite measure even if $\alpha\leq \alpha_n$. This makes the study more interesting, and many mathematicians dived deep into the study of the Moser-Trudinger inequality in the whole $\mathbb{R}^n$. For instance, Cao \cite{Cao-1992} first generalized the result obtained by Moser \cite{Moser-1970/71} and established a version of the Moser-Trudinger inequality in $\mathbb{R}^2$ as follows: 
 \begin{align}\label{eq10.2}
     \int_{\mathbb{R}^2}\big(\operatorname{\exp}(4\pi u^2)-1\big)\,\mathrm{d}x\leq C(m,M),~~\forall~u\in H^1(\mathbb{R}^2),
 \end{align}
  where $\|\nabla u\|^2_2\leq m<1 $, $\|u\|_2\leq M<+\infty$ and $C(m,M)$ is a suitable constant depending only on $m$ and $M$. Later, the inequality \eqref{eq10.2} was fully extended into higher dimensions (in $\mathbb{R}^n$) independently by do \'O \cite{do O-1997}.
 
 In addition, for any arbitrary domain $\Omega\subset\mathbb{R}^2$, it has been observed by Ruf \cite{Ruf-2005} that the following Moser-Trudinger inequality holds:
 \begin{align}\label{eq10.3}
    \sup_{u\in H^1_0(\Omega),~\int_\Omega(|\nabla u|^2+|u|^2)\,\mathrm{d}x\leq1} \int_{\Omega}\big(\operatorname{\exp}(4\pi u^2)-1\big)\,\mathrm{d}x\leq C,
 \end{align}
where $C>0$ is a suitable constant. It is important to note that the inequality mentioned above is sharp; that is, for any growth $\exp(\alpha u^2)$ with $\alpha>4\pi$, the supremum \eqref{eq10.3} becomes infinity. Moreover, by employing blow-up analysis techniques, Li-Ruf \cite{Li-Ruf-2008} again extended the previous result into an arbitrary domain $\Omega\subset\mathbb{R}^n$ and thus\textcolor{blue}{,} into $\mathbb{R}^n$.

To the best of the author's knowledge, the singular version of the Moser-Trudinger inequality \eqref{eq10.1} in bounded domains was first studied by Adimurthi-Sandeep in \cite{Adimurthi-Sandeep-2007}. It states that for all $\beta\in[0,n)$ and $\alpha\leq \alpha_{\beta,n}:=\big(1-\frac{\beta}{n}\big)\alpha_n$, there holds:
\begin{align}\label{eq10.4}
    \sup_{u\in W^{1,n}_0(\Omega),~\|\nabla u\|_{L^n(\Omega)\leq1}}\int_\Omega\frac{\operatorname{\exp}(\alpha|u|^{\frac{n}{n-1}})}{|x|^\beta}\,\mathrm{d}x<+\infty.
\end{align}
Consequently, the above supremum is sharp; that is, if we replace $\alpha_{\beta,n}$ by a larger constant, then the supremum \eqref{eq10.4} is no longer finite. Moreover, the above result has been fully generalized into the whole $\mathbb{R}^n$ by Adimurthi-Yang \cite{Adimurthi-Yang-2010} and de Souza \cite{de Souza-2011}, respectively.

It is well known that the embedding $W^{1,n}_0(\Omega)\hookrightarrow L_{\Phi_\alpha}(\Omega)$ is not compact, and thus, to deal with variational problems, we have several barriers to show the Palais-Smale compactness condition. To overcome these difficulties, Lions \cite{Lions-1985} established a concentration-compactness principle, which is a generalization of the Moser-Trudinger inequality. It states that if $u_k\rightharpoonup u$ weakly in $W^{1,n}_0(\Omega)$ with $u\neq 0$, $\|\nabla u_k\|_{L^n(\Omega)}\leq 1$, $|\nabla u_k|^n \rightharpoonup \mu$ weakly in $\mathcal{M}(\overline{\Omega})$, where $\mathcal{M}(\overline{\Omega})$ is the space of Radon measure in $\mathbb{R}^n$, then 
 \begin{align}\label{eq10.5}
   \sup_{k\in\mathbb{N}} \int_{\Omega} \operatorname{\exp}(\alpha_n p|u_k|^{\frac{n}{n-1}})\,\mathrm{d}x\leq C(p,\Omega), ~\forall~ p\in [1,\eta),   
 \end{align}
 where $\eta=(1-\|\nabla u^\ast \|^n_{L^n(\Omega)})^{-\frac{1}{n-1}}$ and $u^\ast$ is the spherical symmetric decreasing rearrangement of $u$. Further, it has been proved by \v{C}ern\'y-Cianchi-Hencl \cite{Cerny-Cianchi-Hencl-2013} that the exponent $p$ that appeared in \eqref{eq10.5} is sharp and established an improved version of Lions concentration-compactness principle in $W^{1,n}_0(\Omega)$. Notice that analogous versions of the concentration-compactness principle in the spirit of Lions in $W^{1,n}(\mathbb{R}^n)$ can also be seen in the works of \cite{do O-de Souza-de Medeiros-Severo-2014, Zhang-Chen-2018} and the references therein.

It should be pointed out that Moser-Trudinger inequalities are distinguished in two different ways, for instance, critical and subcritical Moser-Trudinger inequalities. The subcritical inequalities are subject to the partial Sobolev norms for the functions, whereas the critical sharp inequalities are subject to the complete Sobolev norms for the functions. These two categories of inequality differ somewhat from one another. However, these inequalities are surprisingly equivalent in the context of standard Sobolev spaces as well as in some other function spaces. For more details, we refer to \cite{de Oliveira-do O-2023, Lam-Lu-Zhang-2017, Lam-Lu-Zhang-2019, Lam-Lu-Zhang-2019@} and the related references. 

In addition, it is important to emphasize that there have been substantial advances in the Moser-Trudinger inequalities to study the existence and multiplicity of solutions to quasilinear elliptic PDEs involving the $n$-Laplace operator as well as the $(p,n)$-Laplace operator in $\mathbb{R}^n$. For a detailed study, one may see \cite{Adimurthi-Yang-2010, Carvalho-Figueiredo-Furtado-Medeiros-2021, Lam-Lu-2012, Mahanta-Mukherjee-Sarkar-2025, Mahanta-Winkert-2026, Yang-2012@} and the references therein. Besides the above-quoted works, some improved and complementary results on the Moser-Trudinger inequalities, which serve as good starting points for understanding the topic, can be found in \cite{Adimurthi-Druet-2005, Csato-Roy-2016, Ibrahim-Masmoudi-Nakanishi-2015, Ishiwata-2011, Malchiodi-Martinazzi-2014, Masmoudi-Sani-2015, Struwe-1988}.

Next, we shall provide a brief historical background on the Moser-Trudinger inequalities in the Heisenberg group. To present our main results, we need to define some basic terms related to the Heisenberg group. Please see Section \ref{sec2} for more details on this topic. In the Euclidean context, it is well known that proofs of the Moser-Trudinger inequalities and the associated concentration-compactness principles are based on conventional Schwarz symmetrization techniques and the P\'olya–Szeg\"{o} inequality. Due to the failure of the the P\'olya–Szeg\"{o} inequality in the Heisenberg group setting, for instance, see \cite[Theorem 5.3]{Manfredi-Vera de Serio-2019} and \cite{Jerison-Lee-1988}, one cannot directly employ similar strategies developed in the list of papers (see \cite{Adimurthi-Yang-2010,  Cao-1992,  do O-1997, do O-de Souza-de Medeiros-Severo-2014}) to prove the Moser-Trudinger inequalities as well as the concentration-compactness principle in the Heisenberg group. In addition, for the aforementioned reason, we also highlight here that the study of extremals of such inequalities in the Heisenberg group context remains an open problem today.

Initially, Cohn-Lu \cite{Cohn-Lu-2001} studied a version of the Moser-Trudinger inequality on bounded domains in the Heisenberg group. They established that for any bounded domain $\Omega\subset\mathbb{H}^n$ and $0<\alpha\leq \alpha_Q$, where $\alpha_Q$ is defined in \eqref{eq9.65}, there holds:
\begin{align}\label{eq10.6}
    \sup_{u\in HW^{1,Q}_0(\Omega),~\|D_H u\|_{L^Q(\Omega)}~\leq 1}~\frac{1}{|\Omega|}\int_{\Omega} \operatorname{\exp}\big(\alpha|u|^{\frac{Q}{Q-1}}\big)\,\mathrm{d}\xi\leq C_0,
 \end{align}
 where the uniform constant $C_0>0$ depends only on $Q$. Consequently, the constant $\alpha_Q$ is sharp; that is, if $\alpha>\alpha_Q$, then the above integral is finite, but the supremum \eqref{eq10.6} is infinity. Again, the above inequality was generalized into a singular version of the Moser-Trudinger inequality by Lam-Lu-Tang \cite{Lam-Lu-Tang-2012}. It can be stated in the following manner: for any bounded domain $\Omega\subset\mathbb{H}^n$ and $\beta\in[0,Q)$ and $\alpha_{\beta,Q}:=\big(1-\frac{\beta}{Q}\big)\alpha_Q$, then there exists a constant $C>0$ depending only on $\beta$ and $Q$ such that
 \begin{align}\label{eq10.7}
    \sup_{u\in HW^{1,Q}_0(\Omega),~\|D_H u\|_{L^Q(\Omega)\leq1}}\frac{1}{|\Omega|^{1-\frac{\beta}{Q}}}\int_\Omega\frac{\operatorname{\exp}\big(\alpha_{\beta,Q}|u|^{\frac{Q}{Q-1}}\big)}{r(\xi)^\beta}\,\mathrm{d}\xi\leq C.
\end{align}
In addition, the constant $\alpha_{\beta,Q}$ is sharp in the sense that, if we replace  $\alpha>\alpha_{\beta,Q}$ in place of $\alpha_{\beta,Q}$, the above supremum \eqref{eq10.7} is no longer finite.

Later, it has been observed that Lam-Lu \cite{Lam-Lu-201@} extended the result obtained by Adimurthi-Yang \cite{Adimurthi-Yang-2010} in the Euclidean space $\mathbb{R}^n$ into the whole $\mathbb{H}^n$, and proved a singular Moser-Trudinger inequality by using symmetrization-free arguments. They showed that for any $\tau>0$, $\beta\in[0,Q)$ and $0<\alpha\leq\alpha_{\beta,Q}:=\big(1-\frac{\beta}{Q}\big)\alpha_Q$, there holds:
  \begin{align}\label{eq10.8}
 \sup_{u\in HW^{1,Q}(\mathbb{H}^n),~\|u\|_{1,\tau}\leq 1} ~\int_{\mathbb{H}^n}\cfrac{\Phi\big(\alpha|u|^{\frac{Q}{Q-1}}\big)}{r(\xi)^\beta}\,\mathrm{d}\xi<+\infty,   
 \end{align}
 where
 \begin{align}\label{eq10.9}
  \quad\|u\|_{1,\tau}=(\tau\|u\|^Q_Q+\|D_H u\|^Q_Q)^{\frac{1}{Q}}\quad\text{and}\quad \Phi(s)=\exp(s)-\sum_{j=0}^{Q-2}\frac{s^j}{j!}.    
 \end{align}
Additionally, if $\alpha>\alpha_{\beta,Q}$, then the integral in \eqref{eq10.8} is still finite, but the above supremum is infinite. 

In this context, we also mention here that the subcritical singular Moser-Trudinger inequality can be found in \cite{Lam-Lu-Tang-2014}, a weighted Moser-Trudinger inequality was established in \cite{Lam-Tang-2013}, whereas by using a degenerate potential function, the authors in \cite{Chen-Lu-Zhu-2023} proved a version of such a type of inequality. However, by introducing symmetrization arguments in the Heisenberg group and a weaker form of the P\'olya–Szeg\"{o} types inequality that appeared in \cite[Theorem 5.3]{Manfredi-Vera de Serio-2019}, Cohn-Lam-Lu-Yang \cite{Cohn-Lam-Lu-Yang-2012} demonstrated an alternative form of the singular Moser-Trudinger inequality. It states that for any $0<\alpha\leq\alpha^\ast$ and $0\leq \beta<Q$, there holds:
 \begin{align*}
 \sup_{\|u\|_{HW^{1,Q}}\leq 1} ~\int_{\mathbb{H}^N}\cfrac{\Phi\big(\alpha|u|^{\frac{Q}{Q-1}}\big)}{r(\xi)^\beta}\,\mathrm{d}\xi<+\infty\iff \frac{\alpha}{\alpha^\ast}+\frac{\beta}{Q}\leq 1,    
 \end{align*}
 where $\alpha^{\ast}=\frac{\alpha_Q}{c^\ast}$ and for $u^\ast$ is the decreasing rearrangement of the nonnegative function $u$, $c^\ast$ is given by $$c^\ast=\inf\bigg\{ c^{\frac{1}{Q-1}}:~\displaystyle\int_{\mathbb{H}^n} |D_H u^\ast|^Q\,\mathrm{d}\xi\leq c \displaystyle\int_{\mathbb{H}^n} |D_H u|^Q\,\mathrm{d}\xi,~u\in HW^{1,Q}(\mathbb{H}^n)\bigg\}.$$

By the analogue of the Lions concentration\textcolor{blue}{-}compactness principle \cite{Lions-1985} and introducing a completely symmetrization-free method, Li-Lu-Zhu \cite{Li-Lu-Zhu-2018} provided the concentration\textcolor{blue}{-}compactness results for both the bounded domains and the whole space $\mathbb{H}^n$, respectively. Their result for any bounded domain $\Omega\subset\mathbb{H}^n$ can be stated as follows:
for any $\beta\in[0,Q)$ and a sequence $\{u_k\}_k\subset HW^{1,Q}_0(\Omega)$ with $\|D_H u_k\|_{L^Q(\Omega)}=1$ and $u_k\rightharpoonup u\neq 0$ weakly in $HW^{1,Q}_0(\Omega)$ as $k\to\infty$, then
$$\sup_{k\in\mathbb{N}}\int_\Omega\frac{\operatorname{\exp}\big(\alpha_{\beta,Q}p|u_k|^{\frac{Q}{Q-1}}\big)}{r(\xi)^\beta}\,\mathrm{d}\xi<+\infty,$$
where $$0<p<M_{Q,u}:=\big(1-\|D_H u\|^Q_{L^Q(\Omega)}\big)^{-\frac{1}{Q-1}}. $$
The constant $M_{Q,u}$ is sharp; that is, the above supremum is infinite for any $p\geq M_{Q,u}$. Similarly, for the whole space $\mathbb{H}^n$, it can be read as follows: for any $\beta\in[0,Q)$ and a sequence $\{u_k\}_k\subset HW^{1,Q}(\mathbb{H}^n)$ with $\| u_k\|_{HW^{1,Q}}=1$ and $u_k\rightharpoonup u\neq 0$ weakly in $HW^{1,Q}(\mathbb{H}^n)$ as $k\to\infty$, then
$$\sup_{k\in\mathbb{N}}\int_{\mathbb{H}^n}\cfrac{\Phi\big(\alpha_{\beta,Q}p|u_k|^{\frac{Q}{Q-1}}\big)}{r(\xi)^\beta}\,\mathrm{d}\xi<+\infty,$$
where $$0<p<\widetilde{M}_{Q,u}:=\big(1-\| u_k\|_{HW^{1,Q}}^Q\big)^{-\frac{1}{Q-1}} $$
and $\Phi(\cdot)$ is defined in \eqref{eq10.9}. In addition, the constant $\widetilde{M}_{Q,u}$ is sharp in the sense that the supremum defined above is no longer finite for any $p> \widetilde{M}_{Q,u}$. Moreover, we also refer to \cite{Chen-Lu-Zhu-2021,  Li-Lu-Zhu-2021} for the complete treatment in this direction.

Over the past few decades, the study of nonhomogeneous quasilinear elliptic equations involving the $(p,q)$-Laplace operator has gained considerable attention due to its wide range of applications in the applied sciences. Indeed, from a physical point, if $u$ denotes a concentration variable, $f(x,u)$ describes a local reaction term related to the source and loss process, and $\text{div}(\mathcal{D}u~ \nabla u)$ represents the diffusion with a diffusion coefficient $\mathcal{D}u$, where $ \mathcal{D}u:=|\nabla u|^{p-2} \nabla u+|\nabla u|^{q-2} \nabla u$, then the study of reaction-diffusion equations, namely
$$ u_t=\text{div}(\mathcal{D}u~ \nabla u)+f(x,u)$$ can be frequently observed in physics, plasma physics, biophysics, chemical reaction design, electromagnetism, and other fields. In many situations, $f(x,u)$ has a polynomial type with variable coefficients, but for the Liouville-Bratu-Gelfand and Frank-Kamenetsky models, $f(x,u)$ exhibits exponential growth at infinity. In this context, we refer \cite{Cassani-Tavares-Zhang-2020} and the references therein. 

The following form of the $(p,q)$-Laplace elliptic equations has been explored by several mathematicians:
$$ -\Delta_pu-\Delta_qu+V_1(x)|u|^{p-2}u+V_2(x)|u|^{q-2}u=f(x,u)\quad\text{in}\quad\mathbb{R}^n ,$$
where $p,q\in(1,+\infty)$ with $p<q$ and $V_1$ and $V_2$ are two potential functions satisfying some suitable hypotheses. The existence, non-existence, and regularity of weak solutions to the above problems have been studied by several authors (see \cite{Ambrosio-2023, Cai-Zhang-2024, He-Li-2008, Mahanta-Mukherjee-Winkert-2025, Zouai-Benouhiba-2023}) using both variational and non-variational techniques. If we take $V_1(x)=V_2(x)=0$ in $\mathbb{R}^n$, then this class of equations are so-called zero mass $(p,q)$-Laplace equations. In recent times, many authors have focused on the study of zero-mass quasilinear elliptic equations in the Euclidean context; for instance, see \cite{Carvalho-Figueiredo-Furtado-Medeiros-2021, Shen-Radulescu-2024} and the related references. It is noteworthy to mention that there are very limited amounts of literature available on the $Q$-Laplace and the $(p,Q)$-Laplace subelliptic equations in the Heisenberg group setting. In this direction, we refer to \cite{Balogh-Manfredi-Tyson-2003, Lam-Lu-201@, Lam-Lu-Tang-2012, Li-Lu-Zhu-2018, Pucci-Temperini-2022} and the references therein.  However, to the best of the authors' knowledge, zero-mass subelliptic equations have not yet been studied in the context of the Heisenberg group. This is one of the key motivations for studying this article. 

To study such a class of subelliptic equations, we assume $1<p<Q$ and define the function space $W$ by
$$ W:=\{u\in L^1_{\textit{loc}}(\mathbb{H}^n)\colon~|D_Hu|_H\in L^p(\mathbb{H}^n)\cap L^Q(\mathbb{H}^n)\}, $$
 which is the completion of the $C_0^\infty(\mathbb{H}^n)$ with respect to the norm 
$$ \|u\|:=\Big(\|D_Hu\|^Q_p+\|D_Hu\|^Q_Q\Big)^{\frac{1}{Q}},~\forall~u\in W, $$
where $\|\cdot\|_m$ is the standard $L^m(\mathbb{H}^n)$ norm for any $m\in[1,+\infty)$. In addition, for any $\Omega\subset \mathbb{H}^n$, we denote the norm of $L^m(\Omega)$ by $\|\cdot\|_{L^m(\Omega)}$. Also, if we introduce another norm $\|\cdot\|$ defined by $\|\cdot\|_W:=\|D_H \cdot\|_p+\|D_H \cdot\|_Q$, then $\|\cdot\|$ and $\|\cdot\|_W$ are equivalent norms for the space $W$. Indeed, one has
\begin{align}\label{eq1.99}
\|u\|\leq \|u\|_W\leq 2^{\frac{Q-1}{Q}}\|u\|,~\forall~u\in W. 
\end{align}
It is easy to see that $(W,\|\cdot\|_W)$ is a uniform convex, reflexive, and separable Banach space. Further, since reflexivity is preserved under equivalent norms, we deduce that $(W,\|\cdot\|)$ is also a reflexive Banach space. In addition, the Folland-Stein space $S^{1,p}(\mathbb{H}^n)$ is defined as the completion of $C_0^\infty(\mathbb{H}^n)$ equipped with the norm 
$$ \|u\|_{S^{1,p}}:=\|D_H u\|_p,~\forall~u\in S^{1,p}(\mathbb{H}^n). $$
In virtue of the Sobolev embedding, the chain of embeddings $W\hookrightarrow S^{1,p}(\mathbb{H}^n)\hookrightarrow L^{p^\ast}(\mathbb{H}^n)$ holds. Moreover, for any $p\in[1,+\infty)$ and $\vartheta\in [0,Q)$, the space $L^p(\mathbb{H}^n,r(\xi)^{-\vartheta} \mathrm{d}\xi)$ is defined by
$$ L^p(\mathbb{H}^n,r(\xi)^{-\vartheta} \mathrm{d}\xi):=\{u:\mathbb{H}^n\to\mathbb{R}~\text{is measurable}~:~|u|^p r(\xi)^{-\vartheta}\in L^1(\mathbb{H}^n)\},$$ 
endowed with the norm
$$ \|u\|_{p,\vartheta}:=\bigg(\int_{\mathbb{H}^n} |u|^p r(\xi)^{-\vartheta}\,\mathrm{d}\xi\bigg)^{\frac{1}{p}}.$$

Additionally, the following notations are used throughout this article: 
\begin{itemize}
\item $(W^*,\|\cdot\|_{*})$ stands for the continuous dual of $(W,\|\cdot\|)$, 
\item $\langle \cdot, \cdot \rangle$ stands for the duality order pair between $W^\ast$ and $W$,
\item  $u^+=$ max~$\{u,0\}$ and $u^-=$ max~$\{- u,0\}$,
\item $o_k(1)$ stands for a real sequence that converges to zero as $k\to\infty$, 
\item $\rightharpoonup $ and $\rightarrow$ stand for the weak convergence and the strong convergence,
\item $B_r(\xi)$ stands for an open ball centered at $\xi\in \mathbb{H}^n$ with radius $r>0$ and $B_r=B_r(O)$,
\item $g\lesssim ~( \gtrsim)~ h$, which means $g\leq ( \geq)~C h$ for some suitable constant $C>0$,
\item $g\underset{\approx}{<}h$ means $g$ is close enough to $h$, and $O(g)$ denotes some constant multiple of $g$,
\item $g\thickapprox h$ means $g$ is approximately equal to function $h$, 
\item $|U|$ represents the $(2n+1)$-dimensional Lebesgue measure for $U\subset\mathbb{H}^n$. \end{itemize}

We would like to highlight the key aspects and novelties of this study, which are listed below\textcolor{blue}{:}
\begin{itemize}
    \item [(a)] In the Heisenberg group setting, due to the absence of the P\'olya–Szeg\"{o} type inequality, we cannot use Schwarz symmetrization to prove the singular Moser-Trudinger type inequality in $W$ (see Theorem \ref{thm1.1}) as well as the associated concentration-compactness principle (see Theorem \ref{thm1.2}). We use a rearrangement-free approach introduced by Lam-Lu \cite{Lam-Lu-201@})  to prove such inequalities.
    \item [(b)] The embedding $W$ into the space $L_{\Phi_{\alpha,j_0}}(\mathbb{H}^n)$ is not compact when there is no singularity, meaning $\vartheta=0$. This is explained in Theorem \ref{thm1.1}. The space $L_{\Phi_{\alpha,j_0}}(\mathbb{H}^n)$ is defined by the Young function $\Phi_{\alpha,j_0}(\cdot)$ shown in equation \eqref{eq1.55}. Therefore, it is quite challenging to confirm the Palais-Smale compactness condition for the associated energy functional related to our main problem when we are considering the non-singular case. It should be noted here that a similar type of difficulty also arises for the singular case, that is, $\vartheta\neq 0$. To surpass this situation, we shall establish a concentration-compactness principle (see Theorem \ref{thm1.2}) by using a rearrangement-free approach discussed in \cite{Chen-Lu-Zhu-2021, Li-Lu-Zhu-2018, Li-Lu-Zhu-2021}.
    \item [(c)] In our study, we rely on the compact embedding $ W \hookrightarrow L^\theta(\mathbb{H}^n,r(\xi)^{-\vartheta} \mathrm{d}\xi)$ for all $\theta\geq p^\ast$ and $\vartheta\in(0,Q)$, for instance, see Theorem \ref{thm1.4}. Thus, our approach is different from $\vartheta=0$. However, one can apply the same procedure developed in this article and Lemma \ref{lem2.1} to study the non-singular case, that is, $\vartheta=0$.
    \item [(d)] For every Palais-Smale sequence $\{u_k\}_k \subset W$ with $u_k\rightharpoonup u$ in $W$ as $k\to\infty$, we cannot immediately obtain $$|D_H u_k|_H^{m-2}D_H u_k \rightharpoonup |D_H u|_H^{m-2}D_H u\quad\text{in}\quad L^{\frac{m}{m-1}}(\mathbb{H}^n,\mathbb{R}^{2n})\quad\text{as}\quad k\to\infty$$ for $m\in\{p,Q\}$. It suggests to us that more careful study is needed to prove $$D_H u_k\to D_H u \quad\text{a.e. in}\quad\mathbb{H}^n\quad\text{as}\quad k\to\infty.$$
     \item[(e)] The proofs employ advanced methodologies, such as variational and topological tools.
\end{itemize}

Moreover, inspired by the works quoted above, for example, we refer to \cite{Carvalho-Figueiredo-Furtado-Medeiros-2021, Chen-Lu-Zhu-2023, Lam-Lu-201@, Li-Lu-Zhu-2018, Pucci-Temperini-2022}; for the first time in literature, we establish a version of sharp singular Moser-Trudinger inequality, its concentration-compactness principle, asymptotic behavior of the sharp subcritical singular Moser-Trudinger inequality, relation between sharp critical and subcritical singular Moser-Trudinger inequalities in a suitable space $W$ and as a consequence of these inequalities, we study a class of subelliptic PDEs involving $(p,Q)$-Laplace operator in the whole space $\mathbb{H}^n$.

In view of Lemma \ref{lem2.1}, one sees that the embedding $W\hookrightarrow L^\tau (\mathbb{H}^n)$ is continuous for all  $\tau\in [p^\ast,+\infty)$. Thus, we have the following version of the Moser-Trudinger type inequality, which actually says that $W$ can be continuously immersed into the Orlicz space $L_{\Phi_{\alpha,j_0}}(\mathbb{H}^n)$ for the non-singular case, that is, $\vartheta=0$, where the associated Young function $\Phi_{\alpha,j_0}:\mathbb{R}\to [0,+\infty)$ is given by
\begin{align}\label{eq1.55}
 \Phi_{\alpha,j_0}(s):=\operatorname{exp}(\alpha|s|^{\frac{Q}{Q-1}})-\sum_{j=0}^{j_0-1} \frac{\alpha^j}{j!}|s|^{\frac{Qj}{Q-1}},~~\forall~s\in\mathbb{R}\quad\text{and}\quad\alpha> 0,
 \end{align}
with $$ \quad j_0:=\left\lceil \frac{p^\ast (Q-1)}{Q} \right\rceil=\min\bigg\{j\in\mathbb{N}:~j\geq \frac{p^\ast (Q-1)}{Q}\bigg\},$$
where $\left\lceil \cdot \right\rceil$ is the notation of the ceiling function. In addition, we mentioned that our article also discusses the more generalized singular Moser-Trudinger type inequality, which also includes the non\textcolor{blue}{-}singular case, that is, $\vartheta\in[0,Q)$. It can be read as follows.
\begin{theorem}[\bf Sharp Singular Moser-Trudinger Type Inequality] \label{thm1.1}
Assume that $1<p<Q$ and $\vartheta\in[0,Q)$. Denote 
\begin{align}\label{eq9.65}
\alpha_Q:=Q\bigg(2\pi^n\Gamma\bigg(\frac{1}{2}\bigg)\Gamma\bigg(\frac{Q-1}{2}\bigg){\Gamma\bigg(\frac{Q}{2}\bigg)}^{-1}{\Gamma(n)}^{-1}\bigg)^{Q^\prime-1},~~\text{ where}~~ \frac{1}{Q}+\frac{1}{Q^\prime}=1.
\end{align} 
Then for all $0<\alpha\leq \alpha_{\vartheta,Q}:=\big(1-\frac{\vartheta}{Q}\big)\alpha_Q$ and $u\in W$, there holds
\begin{align}\label{eq1.12}
    \sup_{u\in W,~\|u\|\leq 1}\int_{\mathbb{H}^n}\frac{\Phi_{\alpha,j_0}(u)}{r(\xi)^\vartheta}\,\mathrm{d}\xi<+\infty.
\end{align}
Furthermore, the constant $\alpha_{\vartheta,Q}$ is sharp in the sense that the supremum \eqref{eq1.12} is infinite for $\alpha> \alpha_{\vartheta,Q}$.
\end{theorem}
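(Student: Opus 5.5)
The plan follows the Lam--Lu symmetrization-free method: split $\mathbb{H}^n$ into the region where $|u|$ is large and the region where it is small. Fix $u\in W$ with $\|u\|\le 1$; replacing $u$ by $|u|$, I may assume $u\ge 0$. Let $\Omega(u):=\{\xi:u(\xi)>1\}$. By Lemma \ref{lem2.1}, $W\hookrightarrow L^{p^\ast}(\mathbb{H}^n)$, so $|\Omega(u)|\le \|u\|_{p^\ast}^{p^\ast}\le C$ uniformly. I would write
\[\int_{\mathbb{H}^n}\frac{\Phi_{\alpha,j_0}(u)}{r(\xi)^\vartheta}\,\mathrm{d}\xi=\int_{\{u\le1\}}+\int_{\Omega(u)}.\]
\emph{Small region.} For $u\le 1$, the series bound gives $\Phi_{\alpha,j_0}(u)\le C_\alpha u^{Qj_0/(Q-1)}$ with $Qj_0/(Q-1)\ge p^\ast$. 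I split further into $B_1$ and $B_1^c$. On $B_1$, $r^{-\vartheta}$ is integrable (since $\vartheta<Q$) and $u\le 1$, so this piece is bounded. On $B_1^c$, $r^{-\vartheta}\le 1$, and $u^{Qj_0/(Q-1)}\le u^{p^\ast}$ because $u\le 1$; this is bounded by $C\|u\|^{p^\ast}$ via the Sobolev embedding.

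\emph{Large region.} Set $v:=u-1$ on $\Omega(u)$; then $v\in HW^{1,Q}_0(\Omega(u))$ and $\|D_Hv\|_{L^Q(\Omega(u))}\le \|D_Hu\|_Q$. The elementary inequality $(a+1)^{Q'}\le (1+\varepsilon)a^{Q'}+C_\varepsilon$ does not suffice by itself. Instead I would use the Lam--Lu trick: with $A:=\|D_Hu\|_p^Q$, so that $\|D_Hu\|_Q^Q\le 1-A$, one has
\[u^{Q'}\le (1+A)^{\frac{1}{Q-1}}v^{Q'}+C(A).\]
Defining $w:=(1+A)^{1/Q}v$, I get $\|D_Hw\|_Q^Q\le (1+A)(1-A)\le 1$, and $\alpha u^{Q'}\le \alpha w^{Q'}+\alpha C(A)$. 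Then \eqref{eq10.7} (Lam--Lu--Tang) applies on the bounded set $\Omega(u)$, giving a bound $\lesssim |\Omega(u)|^{1-\vartheta/Q}e^{\alpha C(A)}$.

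\textbf{Main obstacle.} The constant $C(A)$ blows up as $A\to 0$, i.e.\ when $\|D_Hu\|_p$ is small. This is where the zero-mass setting departs from Lam--Lu, who had $\tau\|u\|_Q^Q$ in the norm. To handle it, I would bound $|\Omega(u)|$ more precisely. Since $u>1$ on $\Omega(u)$, Sobolev gives $|\Omega(u)|\le \|u\|_{p^\ast}^{p^\ast}\le S\,A^{p^\ast/Q}$. I would then choose $C(A)\approx \log(1/A)$ times a constant, which comes from optimizing $(1+s)^{Q'}\le(1+A)^{1/(Q-1)}s^{Q'}+(1-(1+A)^{-1/(Q-1)})^{1-Q'}$ — the standard inequality $(a+b)^{q}\le (1+\epsilon)^{q-1}a^q+(1-(1+\epsilon)^{-1/(q-1)})^{1-q}b^q$. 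Here $C(A)\sim A^{-1/(Q-1)}$, which is polynomial blow-up in the exponent, so the $|\Omega(u)|$ decay alone may not compensate. If this direct balance fails, I would instead run the level-set argument at height $\lambda$ rather than $1$: choose $\Omega_\lambda=\{u>\lambda\}$ with $\lambda$ large, so that $C(A)$ is multiplied by $\lambda^{Q'}$-type terms. Alternatively, I would use the scaling $u_t(\xi)=u(\delta_t\xi)$, which preserves $\|D_Hu\|_Q$ and multiplies $\|D_Hu\|_p^p$ by $t^{p-Q}$. For fixed $u$ this lets me choose $t$ so that the $p$-part and the measure are normalized. The singular weight, however, is not scale invariant ($r^{-\vartheta}$ picks up $t^{\vartheta-Q}$), so this needs care. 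I expect this uniformity issue to be the hard step.

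\textbf{Sharpness.} I would use Moser-type sequences concentrated at the origin, $u_k$ equal to $(\log k)^{(Q-1)/Q}$-multiples of $\log(1/r)$ truncated on $B_{1/k}\subset B_1$, normalized in $HW^{1,Q}_0(B_1)$ as in Lam--Lu--Tang. Their $L^p$-gradient norm tends to $0$ because $p<Q$ and the support shrinks. Hence $\|u_k\|\to 1$, and after renormalization the integral over $B_{1/k}$ grows like $k^{\,Q(\alpha/\alpha_{\vartheta,Q}-1)(1-\vartheta/Q)}\to\infty$ when $\alpha>\alpha_{\vartheta,Q}$.
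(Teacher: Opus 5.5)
There is a genuine gap, and it is at the step you yourself flag as the main obstacle: the large-region estimate is never closed. With the level fixed at $1$, your computation gives an additive constant $C(A)\sim A^{-1/(Q-1)}$, where $A=\|D_Hu\|_p^Q$. The best bound this produces is roughly $A^{\frac{p^\ast}{Q}(1-\frac{\vartheta}{Q})}\exp\bigl(cA^{-\frac{1}{Q-1}}\bigr)$, which is unbounded as $A\to0$. This regime really occurs: the Moser functions have $\|D_Hu\|_p\to0$. Neither fallback repairs this. Raising the level to a large $\lambda$ multiplies the bad constant by $\lambda^{Q'}$, so it goes in the wrong direction. The dilation $u(\delta_t\xi)$ does not change the values of $u$, so a fixed level for $u_t$ is the same fixed level for $u$. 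After undoing the factor $t^{Q-\vartheta}$ you recover exactly the same non-uniform bound.

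The missing idea, which is what the paper uses, is to truncate at a level proportional to $\|D_Hu\|_p$, namely $\lambda(u)=2^{-\frac{1}{Q(Q-1)}}\|D_Hu\|_p<1$. This gives three uniform estimates.
\begin{enumerate}
\item By the homogeneous Sobolev inequality $\|u\|_{p^\ast}\lesssim\|D_Hu\|_p$, the set $\{u>\lambda(u)\}$ has measure $\lesssim\lambda(u)^{-p^\ast}\|D_Hu\|_p^{p^\ast}$, which is bounded independently of $u$.
\item Set $w=u-\lambda(u)$. The elementary inequality $(a+b)^{Q'}\le a^{Q'}+Q'2^{\frac{1}{Q-1}}(a^{\frac{1}{Q-1}}b+b^{Q'})$, combined with Young's inequality, gives $u^{Q'}\le\bigl(1+\frac{1}{Q-1}\|D_Hu\|_p^Q\bigr)w^{Q'}+D_Q$, with $D_Q$ depending only on $Q$. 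In your language, the blow-up $C(A)\sim A^{-1/(Q-1)}$ is cancelled by the factor $\lambda(u)^{Q'}\sim A^{1/(Q-1)}$.
\item The multiplicative factor is absorbed, because $\|D_Hw\|_Q^{Q'}\le\|D_Hu\|_Q^{Q'}\le(1-\|D_Hu\|_p^Q)^{\frac{1}{Q-1}}\le1-\frac{1}{Q-1}\|D_Hu\|_p^Q$. Hence $\psi=\bigl(1+\frac{1}{Q-1}\|D_Hu\|_p^Q\bigr)^{\frac{Q-1}{Q}}w$ satisfies $\|D_H\psi\|_Q\le1$, and \eqref{eq10.7} applies with a uniform constant.
\end{enumerate}
Since $\lambda(u)<1$, your small-region argument works verbatim on $\{u\le\lambda(u)\}\subset\{u<1\}$. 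Your sharpness argument with the Moser sequence is the same as the paper's and is fine.
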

 Now, by drawing inspiration from the seminal works of Li-Lu-Zhu \cite{Li-Lu-Zhu-2018, Li-Lu-Zhu-2021} and Chen-Lu-Zhu \cite{Chen-Lu-Zhu-2021}, we establish a version of the concentration-compactness principle as an improvement of the Moser-Trudinger inequality stated above as follows.
\begin{theorem}[\bf Sharp Singular Concentration-Compactness Principle] \label{thm1.2}
Assume that $1<p<Q$ and $\vartheta\in[0,Q)$. Let $\{u_k\}_k\subset W$ be a sequence such that $\|u_k\|=1$ and $u_k\rightharpoonup u\neq 0$ in $W$ as $k\to\infty$, then  for all $\zeta$ satisfying $$ 0<\zeta< Z_Q(u):= \big(1-\|u\|^{Q}\big)^{-\frac{1}{Q-1}} \quad\quad (Z_Q(u)=+\infty \quad\text{whenever}\quad\|u\|=1),$$
we have 
\begin{align}\label{eq1.13}
\sup_{k\in\mathbb{N}}\int_{\mathbb{H}^n}\frac{\Phi_{\zeta\alpha_{\vartheta,Q},j_0}(u_k)}{r(\xi)^\vartheta}\,\mathrm{d}\xi<+\infty.
\end{align}
In addition, the constant $Z_Q(u)$ is sharp in the sense that the supremum \eqref{eq1.13} is infinite for $\zeta> Z_Q(u)$.
\end{theorem}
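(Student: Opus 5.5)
We follow the rearrangement-free scheme of Li-Lu-Zhu \cite{Li-Lu-Zhu-2018} and Chen-Lu-Zhu \cite{Chen-Lu-Zhu-2021}. We reduce \eqref{eq1.13} to the bounded-domain singular concentration-compactness principle of Li-Lu-Zhu quoted in the introduction. The large-level contributions are handled by Theorem \ref{thm1.1}, and the region where $|u_k|$ is small is controlled through the embedding $W\hookrightarrow L^\tau(\mathbb{H}^n)$, $\tau\ge p^\ast$, from Lemma \ref{lem2.1}.

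Fix $\zeta<Z_Q(u)$. Since $\|u_k\|=1$, the level sets $\Omega_k(A):=\{|u_k|>A\}$ satisfy $|\Omega_k(A)|\le A^{-p^\ast}\|u_k\|_{p^\ast}^{p^\ast}\le C A^{-p^\ast}$ uniformly in $k$. On $\mathbb{H}^n\setminus\Omega_k(A)$ we have $|u_k|\le A$. There $\Phi_{\zeta\alpha_{\vartheta,Q},j_0}(u_k)\le C_A|u_k|^{Qj_0/(Q-1)}$ and $Qj_0/(Q-1)\ge p^\ast$, so the integral is bounded. We split it over $B_1$ and its complement: on $B_1$ we use H\"older with $r^{-\vartheta}\in L^{s}(B_1)$ for some $s>1$ close to $1$ together with a higher $L^\tau$ norm, and on the complement $r^{-\vartheta}\le 1$.

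On $\Omega_k(A)$ we write $v_k:=(|u_k|-A)^+$, which lies in $HW^{1,Q}_0(\Omega_k(A))$ in a suitable sense. We use the elementary inequality $|u_k|^{Q'}\le (1+\varepsilon)v_k^{Q'}+C_\varepsilon A^{Q'}$, and we bound the singular weight by the Hardy-Littlewood type estimate that puts the singularity at the origin. This gives
\[
\int_{\Omega_k(A)}\frac{e^{\zeta\alpha_{\vartheta,Q}|u_k|^{Q'}}}{r(\xi)^\vartheta}\,\mathrm d\xi\le e^{C_\varepsilon A^{Q'}}\int_{\Omega_k(A)}\frac{e^{(1+\varepsilon)\zeta\alpha_{\vartheta,Q}v_k^{Q'}}}{r(\xi)^\vartheta}\,\mathrm d\xi .
\]
Next we pass to a subsequence with $|D_Hu_k|_H^Q\rightharpoonup\mu$ and $|D_Hu_k|_H^p\rightharpoonup\nu$. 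The key point is that
\[
\limsup_k\|D_Hv_k\|_Q^Q\le 1-\|D_Hu\|_Q^Q\cdot(\text{something})\;\ldots
\]
More precisely, since $\|u_k\|^Q=\|D_Hu_k\|_p^Q+\|D_Hu_k\|_Q^Q=1$, weak lower semicontinuity gives
\[
\liminf_k\big(\|D_Hu_k\|_p^Q+\|D_H(u_k-v_k)\|_Q^Q\big)\ge\|D_Hu\|_p^Q+\|D_H\min(|u|,A)\|_Q^Q,
\]
and the right side tends to $\|u\|^Q$ as $A\to\infty$. Hence $\limsup_k\|D_Hv_k\|_Q^Q\le 1-\|u\|^Q+o_A(1)$. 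We choose $A$ large and $\varepsilon$ small so that
\[
(1+\varepsilon)\zeta\big(1-\|u\|^Q+o_A(1)\big)^{1/(Q-1)}<1 .
\]
This is possible precisely because $\zeta<Z_Q(u)$. Applying \eqref{eq10.7} (the bounded-domain sharp inequality of Lam-Lu-Tang) to $v_k/\|D_Hv_k\|_Q$ on $\Omega_k(A)$, whose measure is uniformly bounded, then bounds the right side uniformly in $k$. One caveat: $\|D_Hv_k\|_Q\le\|D_Hu_k\|_Q\le 1$ may fail to be small when $\|D_Hu_k\|_p$ is small, but the estimate above already handles this. The main technical obstacle is justifying this splitting of the $Q$-energy when only the combined norm $\|\cdot\|$, not $\|D_H\cdot\|_Q$ alone, is normalized. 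We must make sure the mixed quantity behaves additively, that is, $\|D_Hu_k\|_Q^Q$ splits as $\|D_H\min(|u_k|,A)\|_Q^Q+\|D_Hv_k\|_Q^Q$. This disjoint-support identity is what makes the truncation argument work.

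For sharpness with $\zeta>Z_Q(u)$, we take $u$ to be a suitable multiple of a fixed $C_0^\infty$ function with $\|u\|<1$, and we add Moser-type concentrating functions $w_k$ built from the Kor\'anyi gauge and supported near a point away from $\operatorname{supp}u$. These have $\|D_Hw_k\|_p\to0$ and $\|D_Hw_k\|_Q^Q\to1-\|u\|^Q$ after normalization, and $w_k\rightharpoonup0$. Then $u_k=u+w_k$ (normalized) satisfies the hypotheses, and the standard computation for $\alpha>\alpha_{\vartheta,Q}$ from the sharpness part of Theorem \ref{thm1.1} shows that the integral diverges. For the singular case we place the concentration at the origin and take $u$ supported away from it.
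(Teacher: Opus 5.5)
Your proposal is correct and follows essentially the paper's argument: truncation at a height $A$, the disjoint-support splitting $\|D_H u_k\|_Q^Q=\|D_H\min(u_k,A)\|_Q^Q+\|D_H v_k\|_Q^Q$ combined with weak lower semicontinuity of the truncations, the Lam--Lu--Tang bounded-domain inequality \eqref{eq10.7} on the superlevel set, and, for sharpness, a fixed profile plus normalized Moser functions concentrating at the origin with disjoint gradient supports. The differences are minor: you argue directly rather than by contradiction, which also covers the case $\|u\|=1$ that the paper handles separately via strong convergence and a dominating function; and the Hardy--Littlewood remark is unnecessary, since your displayed estimate is pointwise and \eqref{eq10.7} applies on any bounded domain.
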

The obvious question that follows is yet unanswered:
 \[ \text{ Does the above supremum specified in \eqref{eq1.13} still hold true for}~ \zeta=Z_Q(u)~?\]     

The preceding result provides us with asymptotic lower and upper bounds for the sharp subcritical singular Moser-Trudinger type inequality. Indeed, we have the following theorem.
\begin{theorem}[\bf Sharp Subcritical Singular Moser-Trudinger Type Inequality]\label{thm5.1} Assume that $1<p<Q$,  $\vartheta\in[0,Q)$ and $0<\zeta <\alpha_Q$. Then there holds:
$$\text{SCMTI}(\zeta,\vartheta):=\sup_{u\in W,~\|D_H u\|_{Q}\leq 1}\cfrac{1}{\|D_H u\|_p^{p^\ast(1-\frac{\vartheta}{Q})}}\int_{\mathbb{H}^n}\frac{\Phi_{\zeta(1-\frac{\vartheta}{Q}),j_0}(u)}{r(\xi)^\vartheta}\,\mathrm{d}\xi<+\infty. $$
In addition, the constant $\alpha_Q$ is sharp in the sense that $\text{SCMTI}(\alpha_Q,\vartheta)=+\infty$. Further, there exist two suitable positive constants $c(\vartheta,p,Q)$ and $C(\vartheta,p,Q)$, which depend on $\vartheta$, $p$ and $Q$ such that when $\zeta\underset{\approx}{<}\alpha_Q$, there holds:
$$ \cfrac{c(\vartheta,p,Q)}{\Big(1-\big(\frac{\zeta}{\alpha_Q}\big)^{Q-1}\Big)^{\frac{p^\ast}{Q}\big(1-\frac{\vartheta}{Q}\big)}}\leq \text{SCMTI}(\zeta,\vartheta)\leq \cfrac{C(\vartheta,p,Q)}{\Big(1-\big(\frac{\zeta}{\alpha_Q}\big)^{Q-1}\Big)^{\frac{p^\ast}{Q}\big(1-\frac{\vartheta}{Q}\big)}}.$$    
\end{theorem}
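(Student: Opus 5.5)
The plan follows the scaling approach of Lam--Lu--Zhang for the equivalence of critical and subcritical inequalities, adapted to the dilation structure of $\mathbb{H}^n$. We use the dilations $\delta_\lambda(x,y,t)=(\lambda x,\lambda y,\lambda^2 t)$ and set $u_\lambda(\xi)=u(\delta_\lambda\xi)$. Then
\begin{align*}
\|D_H u_\lambda\|_Q=\|D_H u\|_Q,\qquad \|D_H u_\lambda\|_p^p=\lambda^{p-Q}\|D_H u\|_p^p,\qquad \int_{\mathbb{H}^n}\frac{F(u_\lambda)}{r^\vartheta}\,\mathrm{d}\xi=\lambda^{\vartheta-Q}\int_{\mathbb{H}^n}\frac{F(u)}{r^\vartheta}\,\mathrm{d}\xi .
\end{align*}
Since $p^\ast=\frac{Qp}{Q-p}$, the quotient defining $\text{SCMTI}(\zeta,\vartheta)$ is invariant under $u\mapsto u_\lambda$: the factor $\|D_H u\|_p^{-p^\ast(1-\vartheta/Q)}$ scales like $\lambda^{(Q-p)\frac{p^\ast}{p}\frac{Q-\vartheta}{Q}}=\lambda^{Q-\vartheta}$. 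We may therefore normalize $\|D_H u\|_p$ to any convenient value.

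\textbf{Upper bound and finiteness.} Take $u\in W$ with $\|D_H u\|_Q\le 1$, and set $v=u/\|u\|_{*}$, where the norm is adjusted so that $\|v\|=1$. Concretely, choose $\lambda$ so that $\|D_H u_\lambda\|_p^Q=1-(\zeta/\alpha_Q)^{Q-1}$. Then $\|u_\lambda\|^Q\le \|D_H u\|_Q^Q+1-(\zeta/\alpha_Q)^{Q-1}\le 2-(\zeta/\alpha_Q)^{Q-1}$. A sharper normalization works better: set $w=u_\lambda/\|u_\lambda\|$, so that $|u_\lambda|^{Q'}\le \|u_\lambda\|^{Q'}|w|^{Q'}$. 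We want $\zeta\|u_\lambda\|^{Q'}\le\alpha_Q$, which requires
\[
\|u_\lambda\|^Q\le (\alpha_Q/\zeta)^{Q-1}.
\]
Since $\|u_\lambda\|^Q\le 1+\|D_Hu_\lambda\|_p^Q$, it suffices to take $\|D_Hu_\lambda\|_p^Q=(\alpha_Q/\zeta)^{Q-1}-1\approx 1-(\zeta/\alpha_Q)^{Q-1}$ as $\zeta\uparrow\alpha_Q$. Next we use that $\Phi_{\beta,j_0}$ is monotone in both its argument and $\beta$. By Theorem~\ref{thm1.1} applied to $w$,
\[
\int\Phi_{\zeta(1-\vartheta/Q),j_0}(u_\lambda)r^{-\vartheta}\le \int\Phi_{\alpha_{\vartheta,Q},j_0}(w)r^{-\vartheta}\le C .
\]
Multiplying by $\lambda^{Q-\vartheta}$, that is, dividing by $\|D_H u_\lambda\|_p^{p^\ast(1-\vartheta/Q)}\approx (1-(\zeta/\alpha_Q)^{Q-1})^{\frac{p^\ast}{Q}(1-\frac{\vartheta}{Q})}$ once the invariance is applied, gives finiteness together with the stated upper bound.

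\textbf{Lower bound and sharpness.} We use Moser-type test functions adapted to $\mathbb{H}^n$. Let $m_k$ be the usual truncated logarithm of the Korányi norm, supported in $B_1$, with $\|D_H m_k\|_Q=1$ and $m_k\equiv (k/\alpha_Q)^{(Q-1)/Q}$ on $B_{e^{-k/Q}}$. The integral of $|D_H r|^Q$ over spheres produces exactly the constant $\alpha_Q$ of \eqref{eq9.65}. Two facts are needed: $\|D_Hm_k\|_p^p=O(e^{-k(Q-p)/Q})$-type bounds, and $\|D_H m_k\|_p$ bounded below. On the small ball,
\[
\int_{B_{e^{-k/Q}}}\frac{\Phi_{\zeta(1-\vartheta/Q),j_0}(m_k)}{r^\vartheta}\gtrsim e^{(1-\vartheta/Q)k(\zeta/\alpha_Q)}e^{-k(Q-\vartheta)/Q}.
\]
For $\zeta=\alpha_Q$ this stays bounded below while the normalization factor stays bounded, so this alone does not give divergence. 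Instead we use the dilation freedom and the $\|\cdot\|_p$ denominator. The lower bound comes from testing with $u=(1-\varepsilon)m_k$-type combinations rescaled so that $\|D_Hu\|_p$ is small. Equivalently, we show the reverse inequality from the critical side: if $\text{SCMTI}(\zeta,\vartheta)$ were smaller than claimed, the scaling above would give a finite critical bound beyond $\alpha_{\vartheta,Q}$ for the norm $\|\cdot\|$, contradicting the sharpness in Theorem~\ref{thm1.1}. Quantitatively, we take the functions $v_k$ that nearly realize the critical supremum at level $\alpha_{\vartheta,Q}$ with $\|v_k\|=1$ and $\|D_Hv_k\|_p^Q=t$ small. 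We set $u=v_k/\|D_H v_k\|_Q$, so that $\|D_H v_k\|_Q^{Q}=1-t$, and choose $\zeta$ with $(\zeta/\alpha_Q)^{Q-1}=1-t$. This yields $\text{SCMTI}\gtrsim t^{-\frac{p^\ast}{Q}(1-\frac{\vartheta}{Q})}$. Letting $\zeta\to\alpha_Q$, i.e.\ $t\to0$, gives $\text{SCMTI}(\alpha_Q,\vartheta)=\infty$.

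\textbf{Main obstacle.} The hardest step is the lower bound. It needs, for each small $t$, a function with $\|v\|=1$, $\|D_Hv\|_p^Q=t$, and critical integral bounded below uniformly. We will build this from the Heisenberg Moser sequence $m_k$ by adding a fixed spread-out bump that carries the $L^p$-gradient mass. Controlling the cross terms in $\|\cdot\|$ and the $p^\ast$-tail of $\Phi_{\cdot,j_0}$ relies on Lemma~\ref{lem2.1} and the explicit radial-type computation with $|D_H r|$.
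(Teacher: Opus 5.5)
Your upper bound is correct, but it takes a different route from the paper. Normalizing $\|D_Hu_\lambda\|_p^Q=(\alpha_Q/\zeta)^{Q-1}-1$ forces $\zeta\|u_\lambda\|^{\frac{Q}{Q-1}}\le\alpha_Q$. Monotonicity of $\Phi_{\cdot,j_0}$ together with Theorem~\ref{thm1.1} applied to $u_\lambda/\|u_\lambda\|$ then gives the bound for every $\zeta\in(0,\alpha_Q)$; this is exactly Lemma~\ref{lem4.2} with $a=b=Q$. The paper instead normalizes $\|D_Hu\|_p=1$ and truncates at the level $(1-(\zeta/\alpha_Q)^{Q-1})^{1/Q}$. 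It controls the measure of the superlevel set by the $p^\ast$-Sobolev inequality and applies the bounded-domain Lam--Lu--Tang inequality there, so the rate appears as $|\mathbb{U}|^{1-\vartheta/Q}$. Your route is shorter but rests on Theorem~\ref{thm1.1}.

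The genuine gap is in the lower bound and in the sharpness. Your claims about $m_k$ (the paper's $\psi_k$ from \eqref{eq3.14} with $s=1$) are false. The explicit computation gives
\[
\|D_Hm_k\|_p^p=\frac{Q^pc_p}{(Q-p)\alpha_Q^{(Q-1)p/Q}}\,k^{-p/Q}\big(1-e^{-k(Q-p)/Q}\big),
\]
so $\|D_Hm_k\|_p\to0$ like $k^{-1/Q}$. It is neither $O(e^{-k(Q-p)/Q})$ nor bounded below, and those two "facts" contradict each other anyway. Hence the normalization factor $\|D_Hm_k\|_p^{-p^\ast(1-\vartheta/Q)}\asymp k^{\frac{p^\ast}{Q}(1-\frac{\vartheta}{Q})}$ blows up, and your own small-ball estimate already finishes the job.
\begin{itemize}
\item At $\zeta=\alpha_Q$ the integral is $\gtrsim1$, so the quotient tends to $+\infty$. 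This is precisely the paper's sharpness proof.
\item For $\zeta<\alpha_Q$, choose $k\asymp(1-\zeta/\alpha_Q)^{-1}$. Then $e^{-k(1-\frac{\vartheta}{Q})(1-\frac{\zeta}{\alpha_Q})}\gtrsim1$, which yields $\text{SCMTI}(\zeta,\vartheta)\gtrsim(1-\zeta/\alpha_Q)^{-\frac{p^\ast}{Q}(1-\frac{\vartheta}{Q})}\asymp(1-(\zeta/\alpha_Q)^{Q-1})^{-\frac{p^\ast}{Q}(1-\frac{\vartheta}{Q})}$. The paper does the same with $v_k$ from \eqref{eq3.99} and $\ln k\approx p/(1-\zeta/\alpha_Q)$.
\end{itemize}

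Having discarded this direct argument, your substitute does not close the gap. Reducing to a family $v$ with $\|v\|=1$, $\|D_Hv\|_p^Q=t$ and critical integral bounded below is a correct reformulation, and your computation with $(\zeta/\alpha_Q)^{Q-1}=1-t$ is right. But the reformulation is equivalent to the lower bound itself, and the construction you sketch cannot deliver it.
\begin{itemize}
\item A \emph{fixed} bump carrying $L^p$-gradient mass keeps $\|D_Hv\|_p^Q$ bounded away from $0$, so it cannot equal $t\to0$.
\item A fixed profile dilated to make $\|D_H\cdot\|_p$ small has critical integral decaying like $\lambda^{\vartheta-Q}$.
\end{itemize}
Concentration is essential, and the required family is simply $m_k/\|m_k\|$. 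With $\tau_k:=\|D_Hm_k\|_p^Q\asymp k^{-1}$, its critical integral is $\gtrsim\exp\big(-k(1-\frac{\vartheta}{Q})(1-(1+\tau_k)^{-\frac{1}{Q-1}})\big)\gtrsim1$, because $k\tau_k$ stays bounded.

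Your appeal to the sharpness of $\alpha_{\vartheta,Q}$ in Theorem~\ref{thm1.1} is too vague to produce the rate, since that statement only concerns exponents above $\alpha_{\vartheta,Q}$. As written, neither the lower bound nor $\text{SCMTI}(\alpha_Q,\vartheta)=+\infty$ is established.
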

The next theorem addresses a necessary and sufficient condition for the sharp critical singular Moser-Trudinger type inequality to be bounded. Ultimately, we shall show that sharp critical and subcritical singular Moser-Trudinger type inequalities are equivalent.
\begin{theorem}[\bf Sharp Critical Singular Moser-Trudinger Type Inequality]\label{thm5.22} Assume that $1<p<Q$,  $\vartheta\in[0,Q)$ and $a,b>0$. Then there holds:
$$\text{CMTI}_{a,b}(\vartheta):=\sup_{u\in W,~\|D_H u\|^a_{Q}+\|D_H u\|^b_p\leq 1}\int_{\mathbb{H}^n}\frac{\Phi_{\alpha_{\vartheta,Q},j_0}(u)}{r(\xi)^\vartheta}\,\mathrm{d}\xi<+\infty\iff b\leq Q. $$
In addition, the constant $\alpha_Q$ is sharp in the sense that the above supremum is infinite when a larger constant replaces $\alpha_Q$. Notably, the following identity holds:
\begin{align}\label{eq1.15}
  \text{CMTI}_{a,b}(\vartheta)=\sup_{\zeta\in(0,\alpha_Q)}\Bigg(\cfrac{1-\big(\frac{\zeta}{\alpha_Q}\big)^{(\frac{Q-1}{Q})a}}{\big(\frac{\zeta}{\alpha_Q}\big)^{(\frac{Q-1}{Q})b}}\Bigg)^{\frac{p^\ast}{b}\big(1-\frac{\vartheta}{Q}\big)}\text{SCMTI}(\zeta,\vartheta).  
\end{align}
In particular, there hold $\text{CMTI}_{Q,Q}(\vartheta)<+\infty$ and
$$\text{CMTI}_{Q,Q}(\vartheta)=\sup_{\zeta\in(0,\alpha_Q)}\Bigg(\cfrac{1-\big(\frac{\zeta}{\alpha_Q}\big)^{Q-1}}{\big(\frac{\zeta}{\alpha_Q}\big)^{Q-1}}\Bigg)^{\frac{p^\ast}{Q}\big(1-\frac{\vartheta}{Q}\big)}\text{SCMTI}(\zeta,\vartheta).   $$
\end{theorem}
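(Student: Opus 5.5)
The plan is to prove the identity \eqref{eq1.15} first, using the scaling $u_\lambda(\xi)=u(\lambda\xi)$ (with $\lambda\xi$ the Heisenberg dilation $\delta_\lambda$), and then read off finiteness versus $b$ from the two-sided asymptotics of $\text{SCMTI}(\zeta,\vartheta)$ in Theorem \ref{thm5.1}. Under this dilation,
\[
\|D_H u_\lambda\|_Q=\|D_H u\|_Q,\qquad \|D_H u_\lambda\|_p^p=\lambda^{p-Q}\|D_H u\|_p^p,
\]
and
\[
\int \Phi(u_\lambda)\,r^{-\vartheta}\,\mathrm{d}\xi=\lambda^{\vartheta-Q}\int\Phi(u)\,r^{-\vartheta}\,\mathrm{d}\xi .
\]
Because $p^\ast(1-\vartheta/Q)\cdot(Q-p)/p=Q-\vartheta$, the quotient defining SCMTI is dilation invariant, and the sup can be taken over functions with any prescribed $\|D_H u\|_p$.

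\emph{Inequality $\le$.} Take $u$ with $\|D_H u\|_Q^a+\|D_H u\|_p^b\le 1$ and $u\ne 0$. Set $v=u/\|D_H u\|_Q$, so that $\|D_H v\|_Q=1$, and define $\zeta$ by $(\zeta/\alpha_Q)^{(Q-1)/Q}=\|D_H u\|_Q$. Then
\[
\alpha_{\vartheta,Q}|u|^{Q'}=\zeta\Big(1-\frac{\vartheta}{Q}\Big)|v|^{Q'},
\]
so $\Phi_{\alpha_{\vartheta,Q},j_0}(u)=\Phi_{\zeta(1-\vartheta/Q),j_0}(v)$. Since $\|D_H u\|_p^b\le 1-(\zeta/\alpha_Q)^{(Q-1)a/Q}$, we get
\[
\|D_H v\|_p^{p^\ast(1-\vartheta/Q)}=\Big(\frac{\|D_H u\|_p}{(\zeta/\alpha_Q)^{(Q-1)/Q}}\Big)^{p^\ast(1-\vartheta/Q)}\le\Bigg(\frac{1-(\zeta/\alpha_Q)^{(Q-1)a/Q}}{(\zeta/\alpha_Q)^{(Q-1)b/Q}}\Bigg)^{\frac{p^\ast}{b}(1-\frac{\vartheta}{Q})}.
\]
Multiplying through gives the integral bound by the right-hand side of \eqref{eq1.15}.

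\emph{Inequality $\ge$.} Fix $\zeta$ and $v$ with $\|D_H v\|_Q\le 1$. By homogeneity we may normalize $\|D_H v\|_Q=1$, adjusting $\zeta$ if needed, which is where monotonicity of the $\zeta$-sup is used. Put $u=(\zeta/\alpha_Q)^{(Q-1)/Q}v_\lambda$ and choose $\lambda$ so that $\|D_H u\|_p^b=1-(\zeta/\alpha_Q)^{(Q-1)a/Q}$. By the dilation invariance, the same chain of equalities reverses, and taking the sup over $v$ and $\zeta$ gives \eqref{eq1.15}.

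Finiteness then follows from Theorem \ref{thm5.1}. As $\zeta\to\alpha_Q$, with $t=\zeta/\alpha_Q$, we have $1-t^{(Q-1)a/Q}\sim c(1-t^{Q-1})$. The factor in \eqref{eq1.15} therefore behaves like $(1-t^{Q-1})^{\frac{p^\ast}{b}(1-\frac{\vartheta}{Q})}$, while SCMTI behaves like $(1-t^{Q-1})^{-\frac{p^\ast}{Q}(1-\frac{\vartheta}{Q})}$. Their product stays bounded near $\alpha_Q$ if and only if $1/b\ge 1/Q$, that is, $b\le Q$. For $\zeta$ away from $\alpha_Q$, SCMTI is bounded by monotonicity in $\zeta$. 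Near $\zeta=0$, the factor behaves like $t^{-(Q-1)p^\ast(1-\vartheta/Q)/Q}$. I must check that SCMTI decays like $\zeta^{j_0}$ there, which follows from $\Phi_{\zeta(1-\vartheta/Q),j_0}\le \zeta^{j_0}\Phi_{\cdot}(\cdot)$-type estimates, using $j_0\ge p^\ast(Q-1)/Q$. This small-$\zeta$ bound is the step I expect to be the main obstacle, and it is exactly why $j_0$ was chosen as it is. The lower bound in Theorem \ref{thm5.1} gives $\text{CMTI}_{a,b}=\infty$ when $b>Q$.

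For sharpness of $\alpha_Q$, I would insert Moser-type sequences concentrating at the origin (those used for Theorem \ref{thm1.1}), dilated so that their $p$-gradient norm tends to $0$. They are then admissible for any $a,b$, and the integral blows up for any constant larger than $\alpha_Q$. The case $a=b=Q$ is then a direct specialization.
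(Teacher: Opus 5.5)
Your proof is correct. The two halves of \eqref{eq1.15} are essentially the paper's Lemma \ref{lem4.2} and its maximizing-sequence argument; your $\le$ step via $v=u/\|D_H u\|_Q$, with no dilation, is slightly cleaner.

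The genuine difference is how you get finiteness for $b\le Q$. The paper proves it directly, before and independently of \eqref{eq1.15}, with a case split on $\sigma=\|D_H u\|_Q$. For $\sigma\in(2^{-Q/p^\ast},1)$ it rescales and applies the upper bound of Theorem \ref{thm5.1} on that whole range of $\zeta=\sigma^{Q'}\alpha_Q$. For $\sigma\le 2^{-Q/p^\ast}$ it uses the fixed map $u\mapsto 2^{Q/p^\ast}u(\delta_2\xi)$, which preserves $\|D_H u\|_p$, and bounds everything by the single constant $2^{Q-\vartheta}\,\text{SCMTI}(2^{-\frac{Q^2}{(Q-1)p^\ast}}\alpha_Q,\vartheta)$. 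The identity is then used only for $b>Q$.

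You instead treat \eqref{eq1.15} as an equality in $[0,+\infty]$ and study $\zeta\mapsto(\cdots)\,\text{SCMTI}(\zeta,\vartheta)$ at both endpoints. This needs only the near-$\alpha_Q$ asymptotics of Theorem \ref{thm5.1} plus monotonicity in $\zeta$, at the cost of a decay estimate as $\zeta\to0$. Your bound $\Phi_{\zeta c,j_0}\le(\zeta/\zeta_0)^{j_0}\Phi_{\zeta_0 c,j_0}$, together with $j_0\ge\frac{p^\ast(Q-1)}{Q}\ge\frac{p^\ast(Q-1)}{Q}(1-\frac{\vartheta}{Q})$, settles this cleanly. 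It also shows exactly why $j_0$ is chosen as it is, a point the paper's dilation by $2$ sidesteps.

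Two details need repair.

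\textbf{Normalization in the $\ge$ direction.} Normalizing $\|D_H v\|_Q=1$ is unnecessary. If $\|D_H v\|_Q<1$, then $u=(\zeta/\alpha_Q)^{\frac{Q-1}{Q}}v_\lambda$ still satisfies $\|D_H u\|_Q^a\le(\zeta/\alpha_Q)^{\frac{Q-1}{Q}a}$ and is admissible, exactly as in Lemma \ref{lem4.2}. If you do normalize, the justification is not monotonicity of the supremum but a direct comparison of the factors. Set $s=\|D_H v\|_Q$ and $\tilde\zeta=s^{Q'}\zeta$. Then the factor at $\tilde\zeta$ is at least $s^{-p^\ast(1-\frac{\vartheta}{Q})}$ times the factor at $\zeta$, which exactly absorbs the change in the SCMTI quotient.

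\textbf{Admissibility in the sharpness argument.} The Moser functions have $\|D_H v_k\|_Q=1$, so $\|D_H v_k\|_Q^a+\|D_H v_k\|_p^b>1$ however you dilate them; they are not admissible as stated. Multiply by $\tau_k\to1$ with $\tau_k^a+\tau_k^b\|D_H v_k\|_p^b=1$, as the paper does. The exponent $(Q-\vartheta)\ln k\,(\zeta\tau_k^{Q'}/\alpha_Q-1)$ still diverges for $\zeta>\alpha_Q$, so the conclusion stands.
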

The compact embedding described below plays a significant role in our analysis. Its statement can be interpreted as follows.
\begin{theorem}\label{thm1.4}
Let $1<p<Q$ and $\vartheta\in(0,Q)$. Then the embedding $W\hookrightarrow L^\theta(\mathbb{H}^n,r(\xi)^{-\vartheta}\mathrm{d}\xi)$ is compact for all $\theta\in[p^\ast,+\infty)$.
\end{theorem}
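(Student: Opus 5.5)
Continuity of the embedding and compactness are handled in turn. We take a bounded sequence $\{u_k\}_k\subset W$ with $u_k\rightharpoonup u$ in $W$, and we must show $\|u_k-u\|_{\theta,\vartheta}\to0$. Replacing $u_k$ by $v_k:=u_k-u$, we may assume $u=0$ and $\|v_k\|\le M$. The first ingredient is Lemma \ref{lem2.1}: the embedding $W\hookrightarrow L^\tau(\mathbb{H}^n)$ is continuous for every $\tau\in[p^\ast,+\infty)$, so $\sup_k\|v_k\|_\tau<+\infty$ for all such $\tau$. The second ingredient is local compactness. On any ball $B_R$, the sequence $\{v_k\}_k$ is bounded in $HW^{1,p}(B_R)$, since $\|D_Hv_k\|_p$ is bounded and $\|v_k\|_{L^p(B_R)}\lesssim\|v_k\|_{p^\ast}$. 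The Rellich–Kondrachov theorem for Folland–Stein spaces on bounded domains then gives $v_k\to0$ in $L^s(B_R)$ for $s<p^\ast$. Interpolating with the uniform $L^\tau$ bounds for large $\tau$, we get $v_k\to0$ in $L^s(B_R)$ for every finite $s\ge1$. Finally, a diagonal argument yields a subsequence with $v_k\to0$ a.e.

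Next we split $\mathbb{H}^n=B_R\cup B_R^c$. On the exterior, $r(\xi)^{-\vartheta}\le R^{-\vartheta}$, so
\[
\int_{B_R^c}\frac{|v_k|^\theta}{r(\xi)^\vartheta}\,\mathrm{d}\xi\le R^{-\vartheta}\sup_k\|v_k\|_\theta^\theta\le C R^{-\vartheta},
\]
which is small uniformly in $k$ once $R$ is large. This is the only place where $\vartheta>0$ is used, and it is exactly why the case $\vartheta=0$ is excluded from the statement. On the ball we use Hölder's inequality with an exponent $t>1$ chosen so that $t\vartheta<Q$; this makes $r^{-\vartheta t}$ integrable on $B_R$, because $|B_\rho|\sim\rho^Q$ in homogeneous dimension. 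Writing $t'$ for the conjugate exponent of $t$, we obtain
\[
\int_{B_R}\frac{|v_k|^\theta}{r(\xi)^\vartheta}\,\mathrm{d}\xi\le\Big(\int_{B_R}r(\xi)^{-\vartheta t}\,\mathrm{d}\xi\Big)^{1/t}\|v_k\|_{L^{\theta t'}(B_R)}^{\theta},
\]
and the right-hand side tends to $0$ by the local strong convergence in every finite $L^s(B_R)$. Letting first $k\to\infty$ and then $R\to\infty$ gives the compactness. Continuity of the embedding itself follows from the same splitting with $v_k$ replaced by an arbitrary $u\in W$, applying Lemma \ref{lem2.1} on both pieces.

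The main obstacle is the local compactness step, which needs the Rellich-type theorem in the Heisenberg setting. Functions in $W$ are not a priori in $L^p$ globally, so we must localize with a cutoff $\psi\in C_0^\infty(B_{2R})$. We then check that $\psi v_k$ is bounded in $HW^{1,p}_0(B_{2R})$, using $|D_H(\psi v_k)|_H\le|D_H v_k|_H+|D_H\psi|_H|v_k|$ together with the $L^{p^\ast}$ bound. Once this is in place, the compact embedding $HW^{1,p}_0(B_{2R})\hookrightarrow L^s(B_{2R})$ for $s<p^\ast$, a standard result for Hörmander vector fields on bounded domains, completes the argument.
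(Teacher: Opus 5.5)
Your argument is correct. It uses the same outer split as the paper but handles the ball more directly. Like you, the paper bounds the exterior piece by $R^{-\vartheta}\sup_j\|u_{k_j}-u\|_\theta^\theta$.

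On $B_R$, however, the paper does not apply the local strong convergence directly. It uses Egoroff's theorem to find a set $\mathbb{U}_\delta\subset B_R$ of small measure off which $u_{k_j}\to u$ uniformly. It then bounds the integral over $B_R\setminus\mathbb{U}_\delta$ by $\sup|u_{k_j}-u|^\theta\int_{B_R}r(\xi)^{-\vartheta}\,\mathrm{d}\xi$. The piece over $\mathbb{U}_\delta$ is controlled by $|\mathbb{U}_\delta|^{1/\sigma}$ times a uniform bound from the continuous embedding $W\hookrightarrow L^{\theta\sigma'}(\mathbb{H}^n,r(\xi)^{-\vartheta\sigma'}\mathrm{d}\xi)$ with $\vartheta\sigma'<Q$. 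That is a Vitali-type argument needing only a.e. convergence plus uniform higher integrability against the singular weight. Your single H\"older step with $t\vartheta<Q$ replaces it and is shorter, since strong convergence in $L^{\theta t'}(B_R)$ is available anyway.

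The local compactness step you flag as the main obstacle is already settled in the preliminaries. Lemma \ref{lem2.1} states that $W\hookrightarrow L^\tau(B_R)$ is compact for every $\tau\in[1,+\infty)$, proved via $W\hookrightarrow HW^{1,Q}(B_R)$ and the compact embedding $HW^{1,Q}(B_R)\hookrightarrow L^\tau(B_R)$. Your alternative is also valid: cut off into $HW^{1,p}_0(B_{2R})$, apply Rellich for exponents below $p^\ast$, then interpolate with the global $L^\tau$ bounds. Its minor advantage is that it only needs the zero-trace Rellich theorem, which requires no regularity of the Kor\'anyi ball.

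Two small remarks. The diagonal a.e. argument plays no role in your proof. Identifying the Rellich limit as $0$ follows from $v_k\rightharpoonup 0$ weakly in $L^{p^\ast}(\mathbb{H}^n)$ together with the usual subsequence argument.
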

Due to the applications of Theorems \ref{thm1.1}, \ref{thm1.2}, and \ref{thm1.4},  we study the ground state solutions to a class of zero mass $(p,Q)$-Laplace subelliptic equations with singular exponential growth in $\mathbb{H}^n$ as follows:
\begin{equation}\label{main problem}
   -\Delta_{H,p} u-\Delta_{H,Q} u=\frac{f(\xi,u)}{r(\xi)^\vartheta}\quad\text{in}\quad \mathbb{H}^n,\tag{$\mathcal{P}$}
\end{equation}
with $1<p<Q$, $\vartheta\in(0,Q)$ and $Q=2n+2$ is the homogeneous dimension of the Heisenberg group $\mathbb{H}^n$. The nonlinear term $f:\mathbb{H}^n\times \mathbb{R}\to \mathbb{R}$ is a Carath\'eodory function, which satisfies some hypotheses and the function $r(\cdot)$ is known as the Kor\'anyi norm in $\mathbb{H}^n$, which is defined as follows:
$$r(\xi)=r(z,t)=(|z|^4+t^2)^{\frac{1}{4}},~\forall~\xi\in \mathbb{H}^n,$$
where $\xi=(z,t)\in \mathbb{H}^n$, $z=(x,y)=(x_1,\dots,x_n,y_1,\dots,y_n)\in\mathbb{R}^{2n}$, $t\in \mathbb{R}$, and $|z|$ is the Euclidean norm in $\mathbb{R}^{2n}$. In addition, the operator $\Delta_{H,m} u=\text{div}_H(|D_H u|_H^{m-2}D_H u)$ with $m>1$ is the standard $m$-Laplace subelliptic operator of $u$, where $D_H u$ is the subelliptic gradient (horizontal gradient) of  $u$, which is the $2n$-dimensional vector defined by
$$ D_H u=(X_1 u,\dots,X_n u,Y_1 u,\dots,Y_n u), $$
and $\{X_j, Y_j\}_{j=1}^{n}$ is the standard basis of the horizontal left-invariant vector fields on $\mathbb{H}^n$ with
$$X_j=\frac{\partial}{\partial x_j}+2y_j \frac{\partial}{\partial t}\quad\text{and}\quad Y_j=\frac{\partial}{\partial y_j}-2x_j \frac{\partial}{\partial t},~\forall~ j\in\{1,\dots,n\}. $$
Throughout this article, we assume that the nonlinearity $f:\mathbb{H}^n\times \mathbb{R}\to \mathbb{R}$ satisfies:
\begin{itemize}
    \item [($f_1$)] Let $f:\mathbb{H}^n\times \mathbb{R}\to \mathbb{R}$ be a Carath\'eodory function such that $f(\xi,s)=0$ for all $(\xi,s)\in \mathbb{H}^n\times (-\infty,0]$ and $f(\xi,s)>0$ for all $(\xi,s)\in \mathbb{H}^n\times (0,+\infty)$. In addition, there exists a constant $q>\max\{p^\ast,Q\}$, where $p^\ast:=\frac{Qp}{Q-p}$ is the critical Sobolev exponent such that for all $\xi\in \mathbb{H}^n$,  $$f(\xi,s)=o(|s|^{q-1})\quad\text{as}\quad s\to 0.$$  
    \item [($f_2$)] The function $f:\mathbb{H}^n\times \mathbb{R}\to \mathbb{R}$ has critical exponential growth at infinity, that is, it behaves like $\exp{(\alpha|s|^{\frac{Q}{Q-1}})}$ as $|s|\to~+\infty$ for some $\alpha>0$, which means that there exists a positive constant $\alpha_0$ such that  
    \begin{align*}
	\lim_{|s|\to +\infty} \frac{|f(\xi,s)|}{\operatorname{exp}\big(\alpha|s|^{\frac{Q}{Q-1}}\big)}=
	\begin{cases}
		0       & \text{if } \alpha>\alpha_0, \\
		+\infty & \text{if }\alpha<\alpha_0, 
	\end{cases}~~~~\text{uniformly with respect to}~~ \xi\in \mathbb{H}^n.
\end{align*}
      \item [($f_3$)] There exists a constant $\mu>Q$ such that
      $$0<\mu F(\xi,s):=\mu\int_{0}^{s} f(\xi,t)\,\mathrm{d}t\leq f(\xi,s)s\quad\text{for all}\quad(\xi,s)\in \mathbb{H}^n\times (0,+\infty). $$
       \item [($f_4$)] There exist positive constants $s_0$ and $K_0$ such that 
       $$0< F(\xi,s)\leq K_0f(\xi,s)\quad\text{for all}\quad(\xi,s)\in \mathbb{H}^n\times[s_0,+\infty). $$
       \item [($f_5$)] There exists a constant $\varrho>\max\{p^\ast,Q\}$ and $\beta>0$ such that 
       $$ F(\xi,s)\geq \beta s^\varrho\quad\text{for all}\quad(\xi,s)\in\mathbb{H}^n\times (0,+\infty).$$
       \item [($f_6$)] The function $s\mapsto\displaystyle\frac{f(\xi,s)}{s^{Q-1}}$ is strictly increasing for all $(\xi,s)\in \mathbb{H}^n\times (0,+\infty)$.
\end{itemize}

In virtue of $(f_1)$ and $(f_2)$, for any fixed $\ell>\max\{p^\ast,Q\}$ and $\alpha>\alpha_0$, there exists $\varepsilon>0$ and a constant $C_\varepsilon>0$ depending on $\varepsilon,~\alpha$ and $\ell$ such that 
\begin{align}\label{eq1.1}
    |f(\xi,s)|\leq \varepsilon |s|^{q-1}+C_\varepsilon|s|^{\ell-1}\Phi_{\alpha,j_0}(s)\quad\text{for all}\quad(\xi,s)\in\mathbb{H}^n\times\mathbb{R}
\end{align}
and thus, by employing $(f_3)$, we have
\begin{align}\label{eq1.2}
    |F(\xi,s)|\leq \varepsilon |s|^{q}+C_\varepsilon|s|^{\ell}\Phi_{\alpha,j_0} (s)\quad\text{for all}\quad(\xi,s)\in\mathbb{H}^n\times\mathbb{R},
\end{align}
where the Young function $\Phi_{\alpha,j_0}(\cdot)$ is given in \eqref{eq1.55}. It is evident that all of the aforementioned assumptions on the nonlinear term are satisfied by the following function. 
\begin{example}
A model function satisfying the hypotheses $(f_1)-(f_6)$ can be considered as $$F(\xi,s):=\beta (s^+)^\varrho \operatorname{exp}\Big(\alpha_0(s^+)^{\frac{Q}{Q-1}}\Big)\quad\text{for all}\quad(\xi,s)\in\mathbb{H}^n\times\mathbb{R},$$ where $Q=2n+2$, $\beta>0$, $0<\alpha_0 <\alpha$, $\varrho>q,\mu>\max\{p^\ast,Q\}$ and $f(\xi,s)=\frac{\partial F(\xi,s)}{\partial s}$ for all $(\xi,s)\in \mathbb{H}^n\times \mathbb{R}$.
\end{example}
Next, we define the weak formulation to the problem \eqref{main problem} as follows.
\begin{definition}
    We say that $u\in W$ is a weak solution to the problem \eqref{main problem}, if there holds
    $$\int_{\mathbb{H}^n}|D_H u|_H^{p-2}(D_H u,D_H \psi)_H\,\mathrm{d}\xi+\int_{\mathbb{H}^n}|D_H u|_H^{Q-2}(D_H u,D_H \psi)_H\,\mathrm{d}\xi=\int_{\mathbb{H}^n} \frac{f(\xi,u)}{r(\xi)^\vartheta}\psi\,\mathrm{d}\xi,~\forall~\psi\in W.$$  
\end{definition}
Let $I \colon W\to \mathbb{R}$ be the associated energy functional to the problem \eqref{main problem}, which is given by 
\begin{align}\label{eq1.18}
   I(u):=\frac{1}{p}\|D_H u\|^p_p+\frac{1}{Q}\|D_H u\|^Q_Q-\int_{\mathbb{H}^n}\frac{F(\xi,u)}{r(\xi)^\vartheta}\,\mathrm{d}\xi,~\forall~ u\in W.  
\end{align}
It is easy to see that $I$ is well-defined, of class $C^1(W,\mathbb{R})$ and its Gâteaux derivative is given by
\begin{align*}
    \langle{I^\prime(u),\psi}\rangle=\int_{\mathbb{H}^n}\Big( |D_H u|_H^{p-2}(D_H u,D_H \psi)_H+|D_H u|_H^{Q-2}(D_H u,D_H \psi)_H\Big)\,\mathrm{d}\xi-\int_{\mathbb{H}^n}\frac{f(\xi,u)}{r(\xi)^\vartheta}\psi\,\mathrm{d}\xi,
\end{align*}
for all $u,\psi\in W$. By using standard arguments, one sees that the critical points of $I$ are exactly weak solutions to the problem \eqref{main problem}. Moreover, we say that a solution $v_0\in W$  to the problem \eqref{main problem} is a ground state solution, if there holds:
$$ I(v_0)=\inf\{I(u)\colon~u\in W\setminus\{0\}~~\text{and}~~I^\prime(u)=0\}.$$

Now, we state the main existence result for ground state solutions to the problem \eqref{main problem} as follows.
\begin{theorem} \label{thm1.5}
Let $1<p<Q$, $\vartheta\in(0,Q)$ and hypotheses $(f_1)-(f_6)$ be satisfied. In addition, suppose that there exists $\beta_0>0$ large enough such that $(f_5)$ holds for all $\beta\geq \beta_0$, then the problem \eqref{main problem} has a positive ground state solution.
\end{theorem}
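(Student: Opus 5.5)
The plan is to apply the mountain pass theorem to $I$ in \eqref{eq1.18} and then to use the Nehari manifold to show that the mountain pass level is the ground state level.

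\emph{Mountain pass geometry.} I first check the geometry. Fix $\alpha>\alpha_0$ close to $\alpha_0$ and use \eqref{eq1.2}, H\"older's inequality, Theorem \ref{thm1.1} and the embedding of Theorem \ref{thm1.4}. For $\|u\|=\rho$ small this gives
\[
\int_{\mathbb{H}^n}\frac{F(\xi,u)}{r(\xi)^\vartheta}\,\mathrm{d}\xi\lesssim \varepsilon\|u\|^{q}+C_\varepsilon\|u\|^{\ell}.
\]
For $\|u\|\le 1$ we have $\|D_Hu\|_p,\|D_Hu\|_Q\le 1$, so $\frac1p\|D_Hu\|_p^p+\frac1Q\|D_Hu\|_Q^Q\ge \frac1Q\|u\|^Q$. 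Since $q,\ell>Q$, this yields $I\ge \delta>0$ on the sphere $\|u\|=\rho$. For the other end, $(f_5)$ gives $I(t\varphi)\to-\infty$ for any fixed $0\le\varphi\in C_0^\infty$, because $\varrho>Q$.

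\emph{Palais--Smale sequence and bounds.} The mountain pass theorem without (PS) then produces a sequence $\{u_k\}_k$ with $I(u_k)\to c_\ast$ and $I'(u_k)\to0$. Combining $I(u_k)-\frac1\mu\langle I'(u_k),u_k\rangle$ with $(f_3)$ and $\mu>Q>p$ shows that $\{u_k\}_k$ is bounded in $W$.

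\emph{Smallness of the level.} The decisive quantitative step uses $\beta\ge\beta_0$ large. Testing along the path $t\varphi$,
\[
c_\ast\le \max_{t\ge0}\Big(\frac{t^p}{p}A+\frac{t^Q}{Q}B-\beta t^\varrho C\Big)\to0 \quad\text{as }\beta\to\infty .
\]
Hence we may assume $c_\ast<\frac1Q\big(\frac{\alpha_{\vartheta,Q}}{\alpha_0}\big)^{Q-1}$ and, more simply, that $\limsup\|u_k\|^{Q'}<\alpha_{\vartheta,Q}/\alpha_0$ (up to the factors coming from the norm equivalence \eqref{eq1.99}). This places the sequence in the subcritical regime of Theorem \ref{thm1.1}, so
\[
\frac{f(\xi,u_k)}{r^\vartheta} \quad\text{is bounded in } L^{s}(r^{-\vartheta}\mathrm{d}\xi) \quad\text{for some } s>1 .
\]
If needed, Theorem \ref{thm1.2} handles the alternative case $u\ne0$.

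\emph{Passing to the limit.} Let $u_k\rightharpoonup u$. The compact embedding of Theorem \ref{thm1.4}, together with this uniform integrability and Vitali's theorem, gives
\[
\int \frac{f(\xi,u_k)(u_k-u)}{r^\vartheta}\to0 .
\]
The main obstacle is item (d): obtaining a.e.\ convergence of $D_Hu_k$, and then strong convergence. I would argue as follows. Testing $I'(u_k)$ against $u_k-u$ yields
\[
\langle -\Delta_{H,p}u_k-\Delta_{H,Q}u_k,\,u_k-u\rangle\to0 .
\]
The $(S_+)$ property of the $(p,Q)$-Laplacian, which follows from the Simon-type monotonicity inequalities for $m\ge2$ and $m<2$, then gives $u_k\to u$ in $W$. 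If the global test function is problematic, I would localize with cutoffs $\psi_R(u_k-u)$ to obtain a.e.\ gradient convergence first, and then conclude.

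\emph{Nontriviality, positivity and ground state.} Consequently $I(u)=c_\ast>0$ and $I'(u)=0$, so $u\ne0$. Testing with $u^-$ and using $f=0$ on $(-\infty,0]$ gives $u\ge0$, and a Harnack inequality for the subelliptic quasilinear operator gives $u>0$. Finally, $(f_6)$ implies that $t\mapsto I(tv)$ has a unique maximum point on each ray, so
\[
c_\ast=\inf_{\mathcal N}I, \qquad \mathcal N=\{u\in W\setminus\{0\}:\ \langle I'(u),u\rangle=0\},
\]
using the standard characterization of the mountain pass level as $\inf_{v\ne0}\max_{t\ge 0}I(tv)$. Every nontrivial critical point lies in $\mathcal N$, so $u$ is a ground state. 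Because of the $p$-term, the uniqueness of the maximum along rays needs $(f_6)$ together with $p<Q$: after dividing by $t^{Q}$, the map $t\mapsto t^{p-Q}A/1+B-\int F(tv)/t^Q$ is strictly decreasing in derivative sign, which is the check I would carry out carefully.
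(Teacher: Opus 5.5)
Your proof is correct in substance. Its skeleton coincides with Lemmas \ref{lem6.3}, \ref{lem6.4}, \ref{lem5.5} and the last part of the paper's proof: mountain pass geometry, boundedness from $I(u_k)-\frac1\mu\langle I'(u_k),u_k\rangle$, smallness of the level for large $\beta$, and the fibering argument based on $(f_6)$ and $p<Q$.

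The real difference is the compactness step. The paper proves the full $(PS)_c$ condition on $(0,c_0)$ in Proposition \ref{prop5.7}:
\begin{itemize}
\item it first gets a.e.\ convergence of $D_Hu_k$ by a localized Simon-inequality argument (Lemma \ref{lem5.6});
\item it uses this to show that the weak limit solves \eqref{main problem};
\item it then separates $u=0$ (excluded via Theorem \ref{thm1.1}) from $u\neq0$, where the subcase $\|u\|<\lim_k\|u_k\|$ is handled with the concentration-compactness principle, Theorem \ref{thm1.2}, before Simon's inequality gives strong convergence.
\end{itemize}
You instead note that $\limsup_k\|u_k\|^{Q'}<\alpha_{\vartheta,Q}/\alpha_0$ already puts the whole sequence uniformly in the subcritical range of Theorem \ref{thm1.1}. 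Hence $\int_{\mathbb{H}^n}f(\xi,u_k)(u_k-u)r(\xi)^{-\vartheta}\,\mathrm{d}\xi\to0$ whatever $u$ is, and the $(S_+)$ property of $-\Delta_{H,p}-\Delta_{H,Q}$ gives $u_k\to u$ in $W$ directly. This dispenses with Lemma \ref{lem5.6}, the separate weak-solution step, the case analysis and Theorem \ref{thm1.2}. Indeed, given Lemma \ref{lem5.5}, the final H\"older estimate in Proposition \ref{prop5.7} already closes with Theorem \ref{thm1.1} in place of Theorem \ref{thm1.2}. The paper's layout is the standard template in which concentration-compactness is what allows a larger energy threshold. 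Your shortcut costs nothing here, because $\beta_0$ may be taken as large as needed.

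A few details need tidying.
\begin{itemize}
\item \emph{The energy threshold.} The level $c_\ast<\frac1Q(\alpha_{\vartheta,Q}/\alpha_0)^{Q-1}$ does not by itself give the norm bound. You only control $\|D_Hu_k\|_p^p+\|D_Hu_k\|_Q^Q$ by $\frac{Q\mu}{\mu-Q}c_\ast+o_k(1)$, and the $p$-part enters $\|u_k\|^Q$ with the power $Q/p$. Use an explicit level such as $c_0$ in \eqref{eq6.2}.
\item \emph{The $L^s$ bound on $f(\xi,u_k)$.} Theorem \ref{thm1.4} does not supply a single bound in $L^s(\mathbb{H}^n,r(\xi)^{-\vartheta}\mathrm{d}\xi)$ with $s$ close to $1$ for the term $\varepsilon|u_k|^{q-1}$ of \eqref{eq1.1} when $(q-1)s<p^\ast$. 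Lemma \ref{lem2.1} and Theorem \ref{thm1.4} only cover exponents $\ge p^\ast$. Either take $s\ge p^\ast/(q-1)$ and lower the level accordingly, or estimate the two terms of \eqref{eq1.1} separately by H\"older as the paper does.
\item \emph{The ground state.} You only need $c_\ast\le I(w)$ for every nontrivial critical point $w$, not the full identity $c_\ast=\inf_{\mathcal N}I$. The correct check is that $t^{1-Q}\frac{\mathrm{d}}{\mathrm{d}t}I(tw)=t^{p-Q}\|D_Hw\|_p^p+\|D_Hw\|_Q^Q-\int_{\mathbb{H}^n}\frac{f(\xi,tw)}{(tw)^{Q-1}}\frac{w^Q}{r(\xi)^\vartheta}\,\mathrm{d}\xi$ is strictly decreasing in $t$ (with $f$, not $F$). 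This is the paper's computation.
\item \emph{Positivity.} The paper obtains only $u\ge0$, $u\neq0$. Strict positivity via Harnack would need a strong minimum principle for the $(p,Q)$-Laplacian on $\mathbb{H}^n$, which neither you nor the paper provides.
\end{itemize}
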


The rest of this article is organized as follows: In Section \ref{sec2}, we briefly discuss the notations and provide some essential results in the Heisenberg group setting, which will be used in the following main sections. In Section \ref{sec3} and Section \ref{sec4}, by using a rearrangement-free approach, we shall prove Theorem \ref{thm1.1} as well as Theorem \ref{thm1.2}. In Section \ref{sec5} and Section \ref{sec6}, we shall provide the proof of Theorem \ref{thm5.1} and Theorem \ref{thm5.22}, respectively. Furthermore, we establish a compact embedding result; for instance, see Theorem \ref{thm1.4} in Section \ref{sec7}. Eventually, we shall apply the mountain pass theorem, Theorems \ref{thm1.1}, \ref{thm1.2}, and \ref{thm1.4} to prove Theorem \ref{thm1.5} in Section \ref{sec8}.
\section{Some useful preliminary results}\label{sec2}
In this section, we first provide some fundamental properties of the Heisenberg group and technical lemmas that will be utilized in the next sections.

The Heisenberg group $(\mathbb{H}^n,o):=(\mathbb{R}^{2n+1},o) $ is a non-abelian group, which is equipped with the group law
$$\xi o~ \xi^\prime:=\bigg(z+z^\prime,t+t^\prime+2\displaystyle\sum_{i=1}^{n}(y_ix_i^\prime-x_iy_i^\prime)\bigg),~\forall~\xi,~  \xi^\prime\in \mathbb{H}^n,
$$
where $$\xi=(z,t)=(x_1,\dots,x_n,y_1,\dots,y_n,t)~~\text{ and}~~ \xi^\prime=(z^\prime,t^\prime)=(x^\prime_1,\dots,x^\prime_n,y^\prime_1,\dots,y^\prime_n,t^\prime).$$ 
Additionally, we have $$\xi^{-1}=-\xi\quad\text{ and}\quad (\xi o~ \xi^\prime)^{-1}=(\xi^\prime)^{-1} o~\xi^{-1}.$$
The standard basis $\{X_j, Y_j\}_{j=1}^{n}$ for the Lie algebra of left-invariant vector fields on $\mathbb{H}^n$ is given by
$$X_j=\frac{\partial}{\partial x_j}+2y_j \frac{\partial}{\partial t},~~ Y_j=\frac{\partial}{\partial y_j}-2x_j \frac{\partial}{\partial t},~T=\frac{\partial}{\partial t}\quad\text{for}\quad j=1,\dots,n. $$ 
It satisfies the Heisenberg canonical commutation relations:
$$[X_j,Y_k]=-4 \delta_{jk}T,~[X_j,X_k]=[X_j,T]=[Y_j,Y_k]=[Y_j,T]=0.$$
Since all the commutators of length greater than two vanish, $\mathbb{H}^n$ is a nilpotent, graded, and stratified group of step two. A left invariant vector field $X$ that belongs to the span of $\{X_j, Y_j\}_{j=1}^{n}$, is called horizontal. For any $R>0$, the dilation function $\delta_R:\mathbb{H}^n\to\mathbb{H}^n$ is given by
$$ \delta_R(\xi)=(Rz,R^2t),~\forall~\xi=(z,t)\in \mathbb{H}^n. $$
The Jacobian determinant of $\delta_R$ is $R^{Q}$, where $Q=2n+2$ is the homogeneous dimension of $\mathbb{H}^n$. Moreover, the Kor\'anyi norm in $\mathbb{H}^n$ is defined by
$$r(\xi)=r(z,t)=(|z|^4+t^2)^{\frac{1}{4}},~\forall~\xi=(z,t)\in \mathbb{H}^n.$$
Further, the corresponding distance is called the Kor\'anyi distance and is given by 
$$ d_K(\xi,\xi^\prime)=r(\xi^{-1}o ~\xi^\prime),~\forall~(\xi,\xi^\prime)\in \mathbb{H}^n\times \mathbb{H}^n. $$ 
Note that the  Kor\'anyi norm is homogeneous of degree 1 with respect to the dilation. Indeed, for any $R>0$, we have
$$ r(\delta_R(\xi))=r(Rz,R^2t)=(|Rz|^4+R^4t^2)^{\frac{1}{4}}=R~ r(\xi),~\forall~\xi=(z,t)\in \mathbb{H}^n.$$
The Kor\'anyi open ball in $\mathbb{H}^n$ of radius $R$ and centered at $\xi_0$ is defined by
 $$B_R(\xi_0)=\{\xi\in \mathbb{H}^n:~d_K(\xi,\xi_0)<R \}.$$ For simplicity, we put $B_R=B_R(O)$, where $O=(0,0)$ is the natural origin of $\mathbb{H}^n$.
 
The Lebesgue measure on $\mathbb{R}^{2n+1}$
remains invariant under the left translations of the Heisenberg
group. In addition, since the Haar measures on Lie groups are unique up to constant multipliers, we symbolize by $\mathrm{d}\xi$ the Haar measure on $\mathbb{H}^n$, which actually coincides with $(2n + 1)$–Lebesgue measure. For any set $U\subset\mathbb{H}^n$, $|U|$ denotes its $(2n + 1)$–Lebesgue measure. Again, the Haar measure on $\mathbb{H}^n$ is $Q$-homogeneous with respect to $\delta_R$. Notice that we also have
$$|\delta_R(U)| = R^Q|U|,\quad \mathrm{d}(\delta_R (\xi)) = R^Q\mathrm{d}\xi \quad\text{and} \quad|B_R|=R^Q|B_1|.$$
Let $u\in C^1(\mathbb{H}^n,\mathbb{R})$ be fixed. Then\textcolor{blue}{,} the subelliptic gradient (horizontal gradient) $D_H u$ is a $2n$-dimensional vector, which is given by
$$ D_H u=(X_1 u,\dots,X_n u,Y_1 u,\dots,Y_n u). $$
In addition, we denote
$$|D_H u|_H=\sqrt{(D_H u,D_H u)_H}=\bigg(\sum_{j=1}^{n} (X_ju)^2+(Y_ju)^2\bigg)^{\frac{1}{2}},$$
where $(\cdot,\cdot)_H$ is the inner product and $|\cdot|_H$ is the corresponding norm in $\mathbb{R}^{2n}$.
Note that, if $g\in C^1(\mathbb{R)}$, then $$D_H g(u) =g^\prime(u) D_H u. $$
Let $A\colon\xi\in\mathbb{H}^n\to A(\xi)=(A_1\xi,\cdots,A_{2n}\xi)$ be a $C^1$ vector field. Then\textcolor{blue}{,} horizontal divergence of $A$ is defined by
$$\text{div}_H A(\xi)=\sum_{j=1}^{n} X_j(A_j\xi)+\sum_{j=1}^{n} Y_j(A_{n+j}\xi),~~\forall~\xi \in\mathbb{H}^n.$$
Similarly, if $u\in C^2(\mathbb{H}^n)$, then the sub-Laplace (the horizontal Laplace) operator in $\mathbb{H}^n$ of u is given by
$$\Delta_H u=\displaystyle\sum_{j=1}^{n}(X_j^2+Y_j^2)u=\displaystyle\sum_{j=1}^{n}\bigg(\frac{\partial^2}{\partial x_j^2}+\frac{\partial^2}{\partial y_j^2}+4y_j \frac{\partial^2}{\partial x_j\partial t}-4x_j\frac{\partial^2}{\partial y_j\partial t}\bigg)u+4|z|^2\frac{\partial^2 u}{\partial t^2}. $$
A widespread generalization of the sub-Laplace operator is the $p$-Laplace subelliptic operator with $p>1$ on the Heisenberg group, which is  defined by
$$\Delta_{H,p} u=\text{div}_H(|D_H u|_H^{p-2}D_H u),~\forall~u\in C^\infty_0(\mathbb{H}^n). $$
For $m\geq 1$, we define
$$
  HW^{1,m}(\mathbb{H}^n)=\big\{ u\in L^m(\mathbb{H}^n):|D_H u|_H\in L^m(\mathbb{H}^n)\big\},   
$$
endowed with the norm
$$ \|u\|_{HW^{1,m}}=(\|u\|^m_m+\|D_H u\|^m_m)^{\frac{1}{m}}.$$
It is easy to see that the space $(HW^{1,m}(\mathbb{H}^n),\|\cdot\|_{HW^{1,m}})$ is a reflexive and separable Banach space. Moreover, $C^\infty_0(\mathbb{H}^n)$ is a dense subset of $HW^{1,m}(\mathbb{H}^n)$. In a similar manner, for any open set $\Omega$ in $\mathbb{H}^n$, we can define the space $(HW^{1,m}(\Omega),\|\cdot\|_{HW^{1,m}(\Omega)})$. Further, the space $HW^{1,m}_0(\Omega)$ is defined as the closure of $C^\infty_0(\Omega)$ with respect to the norm of $HW^{1,m}(\Omega)$.

Next, we shall present some basic properties of Schwarz symmetrization. For this instance, we refer the readers to see \cite{Baernstein-2019, Bennett-Sharpley-1988}. First, we discuss the symmetrization of the Lebesgue measurable set $\Omega$ in $\mathbb{H}^n$. Let $\Omega^\ast$ be the open ball centered at the origin of $\mathbb{H}^n$ such that $|\Omega^\ast|=|\Omega|$. Additionally, we denote $|\Omega|=+\infty$ implies $\Omega^\ast=\mathbb{H}^n$ and $0<|\Omega|<+\infty$ implies $\Omega^\ast=B_R$ such that $|\Omega^\ast|=|\Omega|=\sigma_Q R^Q$, where $\sigma_Q$ is the volume of the unit ball in $\mathbb{H}^n$ and $R=R(\Omega)$ is the volume radius of $\Omega$. It is well known that $\Omega^\ast$ is called the symmetric decreasing rearrangement of $\Omega$. Let $g\colon \Omega\to\mathbb{R}$ be a measurable function vanishing at infinity, that is, there holds:
\begin{align*}
 |\{\xi\in \Omega\colon~|g(\xi)|>t\}|=\int_{\{\xi\in \Omega\colon~|g(\xi)|>t\}} \mathrm{d}\xi<+\infty,~\forall~t>0.  
\end{align*}
Its distribution function $\mu_g\colon[0,+\infty)\to [0,+\infty]$ is defined by $$\mu_g(t)=|\{\xi\in \Omega\colon~|g(\xi)|>t\}|,~~\forall~t\geq 0.$$ 
 The unidimensional decreasing rearrangement of $g$ is a function $g^\sharp\colon[0,|\Omega|]\to [0,+\infty]$ defined by $$g^\sharp(0):=\text{ess sup}~ g\quad\text{and}\quad g^\sharp(s):=\inf\{t\geq 0\colon~\mu_g(t)<s\},~\forall~s\in(0,|\Omega|].$$ 
Indeed, it has been observed by Bennett-Sharpley \cite{Bennett-Sharpley-1988} that the following identity also holds:
$$g^\sharp(s)=m_{\mu_g}(s)=\sup\{t\geq 0:~\mu_g(t)>s\},~\forall~s\geq 0,$$
where $m_{\mu_g}$ is the distribution function of $\mu_g$. Further, $g^\sharp$ is a nonnegative, decreasing, and right continuous function on $[0,+\infty)$. Now, we define $g^\ast\colon \Omega^\ast\to [0,+\infty]$, which is the Schwarz symmetrization of $g$, alternatively known as the spherically symmetric decreasing rearrangement of $g$, and given by 
$$ g^\ast(\xi):= g^\sharp\big(\sigma_Q r(\xi)^Q\big),~\forall ~\xi\in \Omega^\ast.$$
It is always a radially symmetric, nonnegative, and nonincreasing function. Further, if $\Psi\colon\mathbb{R}\to \mathbb{R}$ is nondecreasing, then 
\begin{equation}\label{eq2.1}
 (\Psi(g))^\ast=\Psi(g^\ast).   
\end{equation}
It has been observed that $g^\ast$ and $|g|$ are equimeasurable, that is, $\{\xi\colon~ g^\ast(\xi)>t\}=\{\xi\colon~ |g(\xi)|>t\}^\ast$. Let $u\colon \mathbb{H}^n\to\mathbb{R}$ be a nonnegative function. 
Then\textcolor{blue}{,} a weak form of the P\'olya–Szeg\"{o} types inequality in the Heisenberg group was proved in \cite[Theorem 5.3]{Manfredi-Vera de Serio-2019}. It states that
\begin{align}\label{eq2.2}
    \|D_H u^\ast\|_p\leq C \|D_H u\|_p,~~\forall~p\geq 1,
\end{align}
where $C\geq 1$ is a constant depending on $p$ and $Q$ only. The result stated below can be found in the proof of \cite[Theorem 1.4]{Chen-Lu-Zhu-2023} for $p=Q$, and it is straightforward to verify that 
\begin{align}\label{eq2.3}
    \|D_H |u|\|_p\leq \|D_H u\|_p,~~\forall~p\geq 1.
\end{align}

In addition, we note that the following Hardy-Littlewood inequality holds.
\begin{lemma}{\cite[Corollary 2.16]{Baernstein-2019}}\label{lem2.22}
Let $g$ and $h$ be two nonnegative measurable functions defined on $\mathbb{H}^n$ with $\mu_g(t)<+\infty$ and $\mu_h(t)<+\infty$ for all $t>0$. Then there holds 
$$\int_{\mathbb{H}^n} g(\xi)h(\xi)\,\mathrm{d}\xi\leq \int_{\mathbb{H}^n} g^\ast(\xi) h^\ast(\xi)\,\mathrm{d}\xi$$
with the notion that the left side is infinity, so is the right side.
\end{lemma}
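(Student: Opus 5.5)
The plan is to reduce the inequality to the case of characteristic functions through the layer-cake representation. Every term involved is nonnegative, so Tonelli's theorem can be applied freely in $[0,+\infty]$. This also handles the final clause of the statement automatically: if the left side is infinite, then so is the right side.

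\textbf{Step 1 (layer-cake).} For nonnegative measurable $g$ and $h$, write
$g(\xi)=\int_0^{\infty}\chi_{\{g>s\}}(\xi)\,\mathrm{d}s$ and $h(\xi)=\int_0^{\infty}\chi_{\{h>t\}}(\xi)\,\mathrm{d}t$. Then Tonelli gives
\begin{align*}
\int_{\mathbb{H}^n} g h\,\mathrm{d}\xi=\int_0^\infty\!\!\int_0^\infty \big|\{g>s\}\cap\{h>t\}\big|\,\mathrm{d}s\,\mathrm{d}t .
\end{align*}
Joint measurability of $(\xi,s,t)\mapsto\chi_{\{g>s\}}(\xi)\chi_{\{h>t\}}(\xi)$ is standard, since $\{(\xi,s): g(\xi)>s\}$ is measurable. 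The same identity holds for $g^\ast$ and $h^\ast$.

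\textbf{Step 2 (sets).} Fix $s,t>0$ and put $A=\{g>s\}$ and $B=\{h>t\}$. By hypothesis $|A|=\mu_g(s)<+\infty$ and $|B|=\mu_h(t)<+\infty$. Hence $A^\ast$ and $B^\ast$ are Kor\'anyi balls centred at $O$, or empty. Two balls with a common centre are nested, so
\begin{align*}
|A\cap B|\le\min\{|A|,|B|\}=\min\{|A^\ast|,|B^\ast|\}=|A^\ast\cap B^\ast|.
\end{align*}
The values $s=0$ or $t=0$ form a null set in the $(s,t)$ integral and can be ignored.

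\textbf{Step 3 (identify level sets).} Next I use the equimeasurability fact recalled before the lemma, namely $\{g^\ast>s\}=\{g>s\}^\ast$, and likewise for $h$. The only care needed is that this identity may hold only up to a null set, open ball versus closed ball. The Kor\'anyi sphere $\{r(\xi)=R\}$ has measure zero, because $|B_R|=\sigma_Q R^Q$ is continuous in $R$. Using $g^\ast(\xi)=g^\sharp(\sigma_Q r(\xi)^Q)$ with $g^\sharp$ decreasing, the set $\{g^\ast>s\}$ is exactly a ball centred at $O$ of measure $|\{g^\sharp>s\}|=\mu_g(s)$. This gives the identification up to a null set, which does not affect the integrals.

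\textbf{Step 4 (conclusion).} Combining Steps 2 and 3 gives
$|\{g>s\}\cap\{h>t\}|\le|\{g^\ast>s\}\cap\{h^\ast>t\}|$ for all $s,t>0$. Integrating over $(s,t)$ and using Step 1 for both pairs $(g,h)$ and $(g^\ast,h^\ast)$ yields the claimed inequality in $[0,+\infty]$.

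The step I expect to need the most care is Step 3: checking that the level sets of $g^\ast$ are precisely centred balls of measure $\mu_g(s)$. This uses the right-continuity of $g^\sharp$ and the relation $g^\sharp(\tau)>s\iff \tau<\mu_g(s)$, which follows from the definition of $g^\sharp$ as an infimum. The remaining steps are routine.
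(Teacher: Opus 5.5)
Your layer-cake reduction to pairs of nested centred Kor\'anyi balls is correct and is the standard proof of this Hardy--Littlewood inequality; the paper gives no argument of its own here, only the citation to Baernstein, so there is nothing further to compare. One small correction to Step 3: the equivalence $g^\sharp(\tau)>s\iff\tau<\mu_g(s)$ rests on the right-continuity of $\mu_g$ (continuity of measure along $\{g>s+1/k\}\uparrow\{g>s\}$), not on that of $g^\sharp$, although your null-set remark already absorbs the open-versus-closed endpoint ambiguity coming from the paper's strict-inequality definition of $g^\sharp$.
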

The following lemma follows from \cite[Proposition 2.1]{Carvalho-Figueiredo-Furtado-Medeiros-2021}. However, for the sake of completeness, we include the proof of the preceding lemma.
\begin{lemma}\label{lem2.1}
 Let $1<p<Q$, then the embedding $W\hookrightarrow L^\tau (\mathbb{H}^n)$ is continuous for all $\tau\in [p^\ast,+\infty)$. Further, the embedding $W\hookrightarrow L^\tau (B_R)$ is compact for all $R>0$ and $\tau\in [1,+\infty)$.
\end{lemma}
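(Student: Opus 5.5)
The plan is to reduce everything to the Folland--Stein Sobolev inequality and to interpolation, using the $L^Q$-part of the gradient only to control large exponents.

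\emph{Continuity.} For $u\in C_0^\infty(\mathbb{H}^n)$ the Sobolev inequality on $\mathbb{H}^n$ gives
$$\|u\|_{p^\ast}\le S_p\|D_H u\|_p\le S_p\|u\|_W .$$
This settles the endpoint $\tau=p^\ast$. For $\tau>p^\ast$ I would use a Gagliardo--Nirenberg/Sobolev-type argument applied to powers of $|u|$. Take $\gamma\ge 1$ and set $v=|u|^\gamma$. Then $|D_H v|_H=\gamma|u|^{\gamma-1}|D_Hu|_H$ almost everywhere, by the chain rule and \eqref{eq2.3}.

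Apply the $L^1$ Sobolev inequality $\|v\|_{Q/(Q-1)}\lesssim\|D_H v\|_1$, followed by Hölder with the pair $(Q,Q')$. This gives
$$\|u\|_{\gamma Q'}^{\gamma}\lesssim \gamma\,\|u\|_{(\gamma-1)Q'}^{\gamma-1}\,\|D_Hu\|_Q .$$
I start the iteration with $(\gamma-1)Q'=p^\ast$, i.e.\ $\gamma=1+p^\ast/Q'$. Iterating $\gamma\mapsto\gamma+1$ then controls every exponent $p^\ast+kQ'$, $k\in\mathbb{N}$, by a product of powers of $\|D_Hu\|_p$ and $\|D_Hu\|_Q$. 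Each such product is bounded by a constant times $\|u\|_W^{\,\cdot}$ with the right homogeneity, since every $L^\tau$ bound is homogeneous of degree one in $u$.

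Exponents between consecutive values $p^\ast+kQ'$ are reached by log-convexity of $L^\tau$ norms (Riesz--Thorin/Hölder interpolation). Density of $C_0^\infty$ in $W$, together with Fatou's lemma, extends the inequality to all of $W$.

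\emph{Compactness on $B_R$.} Let $u_k\rightharpoonup u$ in $W$. Then $\{u_k\}$ is bounded in $HW^{1,p}(B_{2R})$: the gradients are bounded in $L^p$, and $\|u_k\|_{L^p(B_{2R})}\le |B_{2R}|^{1/p-1/p^\ast}\|u_k\|_{p^\ast}$, which is bounded by the continuity part.

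Multiply by a cutoff $\psi\in C_0^\infty(B_{2R})$ with $\psi\equiv1$ on $B_R$. The Rellich--Kondrachov theorem for Hörmander vector fields (Folland--Stein; Garofalo--Nhieu) then gives compactness of $HW^{1,p}_0(B_{2R})\hookrightarrow L^s(B_{2R})$ for $s<p^\ast$. Hence $u_k\to u$ in $L^s(B_R)$ for $1\le s<p^\ast$; small $s$ also follow from Hölder since $|B_R|<\infty$.

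For $\tau\ge p^\ast$, pick $\tau_1>\tau$ and interpolate:
$$\|u_k-u\|_{L^\tau(B_R)}\le \|u_k-u\|_{L^{1}(B_R)}^{\lambda}\,\|u_k-u\|_{\tau_1}^{1-\lambda}.$$
The second factor is uniformly bounded by the continuous embedding just proved, and the first tends to zero. This gives compactness for every $\tau\in[1,\infty)$.

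\emph{Main obstacle.} The hardest step is the iteration for $\tau>p^\ast$. The constants must stay finite for each fixed $\tau$, and both gradient norms must be combined correctly into $\|u\|$. If the bookkeeping becomes messy, I would instead split $u$ into its truncations $\min(|u|,1)$ and $(|u|-1)^+$:
\begin{itemize}
\item the small part lies in $L^\tau$ for all $\tau\ge p^\ast$ because $|u|^\tau\le|u|^{p^\ast}$ where $|u|\le1$;
\item the large part is supported on a set of finite measure, bounded by $\|u\|_{p^\ast}^{p^\ast}$, and has gradient in $L^Q$. Its $L^\tau$ norm then follows from the local Trudinger/Cohn--Lu estimate, or equivalently from the Sobolev inequality in $HW^{1,Q}$ on finite-measure sets.
\end{itemize}
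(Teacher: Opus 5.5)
Your proposal is correct, but both halves take a different route from the paper.

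\textbf{Continuity.} The paper avoids any iteration. By H\"older's inequality it interpolates the horizontal gradient, $\|D_Hu\|_\eta\le\|D_Hu\|_Q^{\sigma}\|D_Hu\|_p^{1-\sigma}\le\|u\|$ for every $\eta\in(p,Q)$. Hence $W\hookrightarrow S^{1,\eta}(\mathbb{H}^n)\hookrightarrow L^{\eta^\ast}(\mathbb{H}^n)$. Since $\eta^\ast\to+\infty$ as $\eta\to Q$, interpolating $L^\tau$ between $L^{p^\ast}$ and $L^{\eta^\ast}$ covers every $\tau\ge p^\ast$ in one step. Your Moser-type bootstrap with $|u|^\gamma$ reaches the same conclusion using only the $L^1$-Sobolev inequality, the $S^{1,p}$ Sobolev inequality and $\|D_Hu\|_Q$. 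The cost is the bookkeeping you anticipated, which is harmless because each fixed $\tau$ needs only finitely many steps. The truncation fallback is therefore unnecessary.

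\textbf{Compactness.} The paper proves $W\hookrightarrow HW^{1,Q}(B_R)$ continuously, using H\"older and the $L^\theta$ bound with $\theta>Q$. It then quotes from Pucci--Temperini the compactness of $HW^{1,Q}(B_R)\hookrightarrow L^\tau(B_R)$ for all finite $\tau$. You instead cut off into $HW^{1,p}_0(B_{2R})$ and use only the subcritical Rellich--Kondrachov theorem. For zero-boundary-value spaces that theorem needs no information about the boundary of the Kor\'anyi ball. You then reach $\tau\ge p^\ast$ by interpolating against the uniform global $L^{\tau_1}$ bound from the first part. So your argument rests on more standard compactness input, while the paper's is shorter once the borderline-case result is cited.

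\textbf{Two small points to tidy.} First, identify the strong $L^s(B_R)$ limit with $u$. It is $u$, because continuity of the embedding gives $u_k\rightharpoonup u$ in $L^{p^\ast}$; by uniqueness of limits, the whole sequence then converges. Second, for $\gamma>1$ and $u\in C_0^\infty(\mathbb{H}^n)$, $|u|^\gamma$ is already $C^1$ with compact support. The chain rule therefore needs no appeal to \eqref{eq2.3}.
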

\begin{proof} Let $\eta\in(p,Q)$ and $\tau\in [p^\ast,+\infty)$ be such that $\tau<\eta^\ast:=\frac{Q\eta}{Q-\eta}$. Then, without losing generality, there exists $\sigma\in(0,1)$ such that $\frac{1}{\eta}=\frac{\sigma}{Q}+\frac{1-\sigma}{p}$. By applying H\"older's inequality, we get
 \begin{align*}
    \|D_H u\|_\eta&=\bigg(\int_{\mathbb{H}^n}|D_H u|^\eta_H\,\mathrm{d}\xi\bigg)^{\frac{1}{\eta}}=\bigg(\int_{\mathbb{H}^n}|D_H u|^{\sigma\eta}_H |D_H u|^{(1-\sigma)\eta}_H\,\mathrm{d}\xi\bigg)^{\frac{1}{\eta}}\\
    &\leq \bigg(\int_{\mathbb{H}^n}|D_H u|^Q_H\,\mathrm{d}\xi\bigg)^{\frac{\sigma}{Q}}\bigg(\int_{\mathbb{H}^n}|D_H u|^p_H\,\mathrm{d}\xi\bigg)^{\frac{1-\sigma}{p}} \leq\|u\|^\sigma \|u\|^{1-\sigma}=\|u\|,
 \end{align*}
for all $u\in W$. It follows from the above inequality that the chain of embeddings $W\hookrightarrow S^{1,\eta}(\mathbb{H}^n)\hookrightarrow L^{\eta^\ast}(\mathbb{H}^n)$ is continuous. Recall that the embeddings $W\hookrightarrow S^{1,p}(\mathbb{H}^n)\hookrightarrow L^{p^\ast}(\mathbb{H}^n)$ hold and $\tau\in[p^\ast,\eta^\ast)$. Thus, there exists $\lambda\in(0,1)$ such that $\frac{1}{\tau}=\frac{\lambda}{\eta^\ast}+\frac{1-\lambda}{p^\ast}$. Due to the interpolation inequality, we have
$$\|u\|_\tau\leq\|u\|^\lambda_{\eta^\ast}\|u\|^{1-\lambda}_{p^\ast}\lesssim \|u\|^\lambda\|u\|^{1-\lambda}=\|u\|<+\infty. $$
So, the embedding $W\hookrightarrow L^\tau (\mathbb{H}^n)$ is continuous for all $\tau\in [p^\ast,+\infty)$. Next, we claim that the embedding $W\hookrightarrow HW^{1,Q}(B_R)$ is continuous for all $R>0$. Indeed, let us choose $\theta>\max\{p^\ast,Q\}$, then by using H\"older's inequality and the embedding $W\hookrightarrow L^\theta(\mathbb{H}^n)$, one has
\begin{align*}
\int_{B_R}|u|^Q\,\mathrm{d}\xi&\leq\bigg(\int_{B_R}\,\mathrm{d}\xi\bigg)^{\frac{\theta-Q}{\theta}} \bigg(\int_{B_R}|u|^\theta\,\mathrm{d}\xi\bigg)^{\frac{Q}{\theta}}\leq |B_R|^{\frac{\theta-Q}{\theta}}\bigg(\int_{\mathbb{H}^n}|u|^\theta\,\mathrm{d}\xi\bigg)^{\frac{Q}{\theta}}\lesssim |B_R|^{\frac{\theta-Q}{\theta}}\|u\|^Q.
\end{align*}
As a byproduct of the above inequality, we obtain 
\begin{align*}
 \|u\|_{HW^{1,Q}(B_R)}\lesssim  \sqrt[Q]{|B_R|^{\frac{\theta-Q}{\theta}}\|u\|^Q+\|D_H u\|^Q_Q}\lesssim \sqrt[Q]{\Big(|B_R|^{\frac{\theta-Q}{\theta}}+1\Big)}\|u\|,~\forall~u\in W.
\end{align*}
This shows the claim. Now, we recall that the embedding $HW^{1,Q}(B_R)\hookrightarrow L^\tau(B_R)$ is compact for all $R>0$ and $\tau\in[1,+\infty)$, thanks to Pucci-Temperini \cite[Page No. $608$, Eq.~$(26)$]{Pucci-Temperini-2022}. Hence, the embedding $W\hookrightarrow L^\tau (B_R)$ is compact for all $\tau\in [1,+\infty)$. This finishes the proof.
\end{proof}
In reference to the work of Yang \cite[Lemma 2.1 and Lemma 2.2]{Yang-2012@}, we present the following result:.  
\begin{lemma}\label{lem2.2}
The function $s\mapsto\operatorname{\exp}(s)-\sum_{j=0}^{j
_0-1}\frac{s^j}{j!}$ is continuous, increasing, and convex in $[0,+\infty)$, where $j_0$ is defined in \eqref{eq1.55}. Additionally, for any $\alpha>0$ and $m\geq1$, we have
 $$ \bigg(\operatorname{exp}(\alpha|s|^{\frac{Q}{Q-1}})-\sum_{j=0}^{j_0-1} \frac{\alpha^j}{j!}|s|^{\frac{Qj}{Q-1}}\bigg)^m\leq  \operatorname{exp}(\alpha m|s|^{\frac{Q}{Q-1}})-\sum_{j=0}^{j_0-1} \frac{\alpha^jm^j}{j!}|s|^{\frac{Qj}{Q-1}},~\forall~s\in\mathbb{R}. $$
\end{lemma}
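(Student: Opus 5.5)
The statement immediately above is Lemma \ref{lem2.2}. I would prove it by working with the auxiliary function
$$h(s):=\exp(s)-\sum_{j=0}^{j_0-1}\frac{s^j}{j!}=\sum_{j\ge j_0}\frac{s^j}{j!},\qquad s\ge 0.$$
Continuity is immediate. For monotonicity and convexity I would differentiate termwise: $h'(s)=\sum_{j\ge j_0}\frac{s^{j-1}}{(j-1)!}\ge 0$ and $h''(s)=\sum_{j\ge j_0,\,j\ge2}\frac{s^{j-2}}{(j-2)!}\ge0$ on $[0,+\infty)$. Since $j_0\ge1$ (indeed $p^\ast(Q-1)/Q>0$), every term is a nonnegative power series with nonnegative coefficients. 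Hence $h$ is increasing (strictly for $s>0$) and convex, which settles the first part.

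For the inequality, I would set $t:=\alpha|s|^{\frac{Q}{Q-1}}\ge0$. The claim then reads $h(t)^m\le h(mt)$ for all $t\ge0$ and $m\ge1$, because $h(mt)=\exp(\alpha m|s|^{\frac{Q}{Q-1}})-\sum_{j<j_0}\frac{\alpha^jm^j}{j!}|s|^{\frac{Qj}{Q-1}}$ exactly. Following Yang's argument, I would fix $t>0$ and study $g(m):=\log h(mt)-m\log h(t)$ for $m\ge1$, noting $g(1)=0$. Differentiating gives
$$g'(m)=t\,\frac{h'(mt)}{h(mt)}-\log h(t).$$
The key observation is that $\psi(x):=\frac{h'(x)}{h(x)}\ge1$ for $x>0$, since $h'(x)-h(x)=\frac{x^{j_0-1}}{(j_0-1)!}\ge0$. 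Consequently $g'(m)\ge t-\log h(t)$, and $h(t)\le e^t$ gives $\log h(t)\le t$. Thus $g'\ge0$, so $g(m)\ge g(1)=0$, which is exactly $h(t)^m\le h(mt)$. The case $t=0$ is trivial, since both sides vanish (using $m\ge1$ and $h(0)=0$ when $j_0\ge1$).

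I expect the main obstacle to be only the bookkeeping that guarantees $h(t)>0$, so that the logarithms are defined. This holds for $t>0$ because all coefficients of $h$ are nonnegative and the $j_0$-th one is positive. An alternative route, should the log-derivative argument run into trouble, is to use that $h(t)/t^{j_0}$ is nondecreasing together with the same comparison with $e^t$.
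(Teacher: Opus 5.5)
Your proof is correct and complete, and it takes a different route from the paper only in the sense that the paper gives no argument at all: it defers entirely to Yang's Lemmas 2.1 and 2.2. So what you wrote is a self-contained substitute for a citation, and nothing in the paper confirms that it is ``Yang's argument.''

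The key points check out:
\begin{itemize}
\item The identity $h'(x)-h(x)=x^{j_0-1}/(j_0-1)!\ge 0$ gives $t\,h'(mt)/h(mt)\ge t$ for $t>0$.
\item The bound $h(t)\le e^t$ gives $\log h(t)\le t$.
\item Together these make $g(m)=\log h(mt)-m\log h(t)$ nondecreasing on $[1,+\infty)$ with $g(1)=0$.
\item The case $t=0$ is trivial since $j_0\ge 1$. In fact $j_0\ge 2$, because $p^\ast(Q-1)/Q=p(Q-1)/(Q-p)>p>1$.
\end{itemize}

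What your route buys over the bare citation is that it visibly works for every truncation index $j_0\ge 1$. So no separate check is needed that the cited inequality applies to the paper's particular $j_0=\lceil p^\ast(Q-1)/Q\rceil$, rather than to the index $Q-1$ of the classical function in \eqref{eq10.9}. It also avoids the induction you would need if you instead differentiated $S_N(mt)-S_N(t)^m$ in $t$, with $S_N(t)=e^t-\sum_{k<N}t^k/k!$, $S_N'=S_{N-1}$ and $0\le S_N\le S_{N-1}$.

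Your stated fallback would not work on its own. Monotonicity of $h(t)/t^{j_0}$ only gives $h(mt)\ge m^{j_0}h(t)$, which falls below $h(t)^m$ as soon as $h(t)^{m-1}>m^{j_0}$.

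If you want an even shorter proof, use $h(t)^m\le e^{(m-1)t}h(t)\le h(mt)$. The first step is just $0\le h(t)\le e^t$ together with $m-1\ge 0$. The second follows by comparing Taylor coefficients: for $n\ge j_0$ the coefficient of $t^n/n!$ on the left is $\sum_{j=j_0}^{n}\binom{n}{j}(m-1)^{n-j}$, which is at most $\sum_{j=0}^{n}\binom{n}{j}(m-1)^{n-j}=m^n$. For $n<j_0$ both coefficients vanish.
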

\section{Sharp singular Moser-Trudinger type inequality: Proof of Theorem \ref{thm1.1}}\label{sec3}
In this section, we focus on proving a variant of the singular Moser-Trudinger type inequality by employing a rearrangement-free approach introduced by Lam-Lu \cite{Lam-Lu-201@}. However, we shall also deduce a consequence result of Theorem \ref{thm1.1}, which is based on Schwarz symmetrization arguments and the utilization of a weaker form of the P\'olya–Szeg\"{o} types inequality (see \cite[Theorem 5.3]{Manfredi-Vera de Serio-2019}) in the Heisenberg group setting.
\begin{proof}[\bf{Proof of Theorem \ref{thm1.1}}.]
Due to the density argument of $C^\infty_0(\mathbb{H}^n)$ in $W$ and \eqref{eq2.3}, without loss of generality, one can assume $u\in C^\infty_0(\mathbb{H}^n)\setminus\{0\}$, $u\geq 0$ and $\|u\|\leq 1$. In addition, we define
$$\digamma(u):=2^{-\frac{1}{Q(Q-1)}}\|D_H u\|_p\qquad\text{and}\qquad\mathbb{U}:=\{\xi\in \mathbb{H}^n\colon~u(\xi)>\digamma(u)\}.$$
By employing the embedding $S^{1,p}(\mathbb{H}^n)\hookrightarrow L^{p^\ast}(\mathbb{H}^n)$, one has
\begin{equation}\label{eq3.1}
    \digamma(u)<1\quad\text{and}\quad |\mathbb{U}|\leq 2^{\frac{p^\ast}{Q(Q-1)}}C(p,Q),
\end{equation}
where $C(p,Q)>0$ is a suitable constant depending on $p$ and $Q$. It follows that $\mathbb{U}$ is a bounded set in $\mathbb{H}^n$. Notice that
$$\int_{\mathbb{H}^n}\frac{\Phi_{\alpha,j_0}(u)}{r(\xi)^\vartheta}\,\mathrm{d}\xi=\bigg(\int_{\mathbb{H}^n\setminus \mathbb{U}}+\int_{\mathbb{U}}\bigg)\frac{\Phi_{\alpha,j_0}(u)}{r(\xi)^\vartheta}\,\mathrm{d}\xi:=J_1+J_2. $$
In view of \eqref{eq3.1}, we deduce that $\mathbb{H}^n\setminus \mathbb{U}\subset \{\xi\in \mathbb{H}^n:~u(\xi)<1\}$. Thus, by using Lemma \ref{lem2.1} and $\|u\|\leq 1$, we obtain
\begin{equation}\label{eq3.2}
\begin{aligned}
J_1 & \leq \int_{\{\xi\in \mathbb{H}^n\colon~ u(\xi)<1\}}\frac{1}{r(\xi)^\vartheta}\sum_{j=j_0}^\infty \frac{\alpha^j}{j!}|u|^{\frac{Qj}{Q-1}}\,\mathrm{d}\xi \leq \int_{\{\xi\in \mathbb{H}^n\colon~ u(\xi)<1\}}\frac{1}{r(\xi)^\vartheta}\sum_{j=j_0}^\infty \frac{\alpha^j}{j!}|u|^{p^\ast}\,\mathrm{d}\xi \\ 
 & \leq \operatorname{\exp}(\alpha) \bigg(\int_{\{\xi\in \mathbb{H}^n\colon~ u(\xi)<1,~r(\xi)<1\}}\frac{|u|^{p^\ast}}{r(\xi)^\vartheta}\,\mathrm{d}\xi+  \int_{\{\xi\in \mathbb{H}^n\colon~u(\xi)<1,~r(\xi)\geq1\}}\frac{|u|^{p^\ast}}{r(\xi)^\vartheta}\,\mathrm{d}\xi\bigg)\\ 
& \leq \operatorname{\exp}(\alpha)\bigg( \int_{\{ \xi\in \mathbb{H}^n\colon~r(\xi)<1\}}\frac{1}{r(\xi)^\vartheta}\,\mathrm{d}\xi+\int_{\{ \xi\in \mathbb{H}^n\colon~r(\xi)\geq1\}}|u|^{p^\ast}\,\mathrm{d}\xi\bigg) \leq C(\alpha,\vartheta,p,Q),
\end{aligned}
\end{equation}
where $C(\alpha,\vartheta,p,Q)>0$ is a suitable constant depending on  $\alpha$, $\vartheta$, $p$ and $Q$. Next, to evaluate $J_2$, we define the function $w\colon\mathbb{U}\to\mathbb{R}$ by $$w(\xi):=u(\xi)-\digamma(u),~\forall~\xi\in\mathbb{U}.$$ It follows directly that $w\in HW^{1,Q}_0(\mathbb{U})$. Recall the following elementary inequality
$$(a+b)^\eta\leq a^\eta+\eta 2^{\eta-1}(a^{\eta-1}b+b^\eta),~\forall~\eta\geq 1~~\text{and}~~a,b\geq 0.$$
By using the above inequality together with Young's inequality, we get
\begin{align*}
    |u(\xi)|^{\frac{Q}{Q-1}}&\leq (|w(\xi)|+|\digamma(u)|)^{\frac{Q}{Q-1}}\leq |w(\xi)|^{\frac{Q}{Q-1}}+\frac{Q}{Q-1}~2^{\frac{1}{Q-1}}\Big(|w(\xi)|^{\frac{1}{Q-1}}|\digamma(u)|+|\digamma(u)|^{\frac{Q}{Q-1}}\Big)\\
    &\leq |w(\xi)|^{\frac{Q}{Q-1}}+\frac{Q}{Q-1}~2^{\frac{1}{Q-1}}\bigg(\frac{|w(\xi)|^{\frac{Q}{Q-1}}|\digamma(u)|^Q}{Q}+\frac{Q-1}{Q}+|\digamma(u)|^{\frac{Q}{Q-1}}\bigg)\\
    &\leq |w(\xi)|^{\frac{Q}{Q-1}}\bigg(1+\frac{2^{\frac{1}{Q-1}}}{Q-1}~|\digamma(u)|^Q\bigg)+D_Q,~\forall~\xi\in \mathbb{U},
\end{align*}
where $$D_Q:=\frac{Q}{Q-1}~2^{\frac{1}{Q-1}}\bigg(\frac{Q-1}{Q}+1\bigg) .$$
Again, we define $\psi\colon\mathbb{U}\to\mathbb{R}$ as follows:
$$\psi(\xi):=\bigg(1+\frac{2^{\frac{1}{Q-1}}}{Q-1}~|\digamma(u)|^Q\bigg)^{\frac{Q-1}{Q}}w(\xi),~~\forall~\xi\in\mathbb{U}. $$
It follows immediately that $\psi\in HW^{1,Q}_0(\mathbb{U})$ and $|u|^{\frac{Q}{Q-1}}\leq |\psi|^{\frac{Q}{Q-1}}+D_Q$ in $\mathbb{U}$. Now, by using the inequality $(1-x)^\eta\leq 1-\eta x$ for all $x\in[0,1]\textcolor{blue}{,}~\eta\in(0,1]$ and $\|u\|\leq1$, we obtain
\begin{align}
 \|D_H \psi\|^{\frac{Q}{Q-1}}_{L^Q(\mathbb{U})}&\notag=\bigg(1+\frac{2^{\frac{1}{Q-1}}}{Q-1}~|\digamma(u)|^Q\bigg) \|D_H w\|^{\frac{Q}{Q-1}}_{L^Q(\mathbb{U})}\leq\bigg(1+\frac{1}{Q-1}\|D_H u\|^Q_p\bigg)\|D_H u\|^{\frac{Q}{Q-1}}_{Q}\\ \notag
 &\leq \bigg(1+\frac{1}{Q-1}\|D_H u\|^Q_p\bigg)\bigg(1-\frac{1}{Q-1}\|D_H u\|^Q_p\bigg)\\ \notag
 &=1-\frac{1}{(Q-1)^2}\|D_H u\|^{2Q}_p\leq 1.
\end{align}
This shows that $\psi\in HW^{1,Q}_0(\mathbb{U})$ and $\|D_H \psi\|_{L^Q(\mathbb{U})}\leq 1$. Hence, by employing the Moser-Trudinger inequality \cite[Theorem 1.4]{Lam-Lu-201@} and \eqref{eq3.1}, we have
\begin{equation}\label{eq3.3}
\begin{aligned}
  J_2&\leq \int_\mathbb{U}\frac{\operatorname{\exp}\big(\alpha|u|^{\frac{Q}{Q-1}}\big)}{r(\xi)^\vartheta}\,\mathrm{d}\xi\leq  \int_\mathbb{U}\frac{\operatorname{\exp}\big(\alpha_{\vartheta,Q}\big(|\psi|^{\frac{Q}{Q-1}}+D_Q\big)\big)}{r(\xi)^\vartheta}\,\mathrm{d}\xi \\
  &\leq \operatorname{\exp}(\alpha_{\vartheta,Q} D_Q)C(\vartheta,Q)|\mathbb{U}|^{1-\frac{\vartheta}{Q}}\leq C(\vartheta,p,Q),
\end{aligned}
\end{equation}
where $C(\vartheta,p,Q)>0$ is a suitable constant depending on $\vartheta$, $p$ and $Q$. By combining \eqref{eq3.2} and \eqref{eq3.3}, we conclude the validity of \eqref{eq1.12}.

To complete the proof, it suffices to check the sharpness of $\alpha_{\vartheta,Q}$. For this, we define
$$\xi=(z,t)\in\mathbb{H}^n\setminus\{O\},~z^\ast=\frac{z}{r(\xi)},~t^\ast=\frac{t}{r(\xi)^2}\quad\text{and}\quad\xi^\ast=(z^\ast,t^\ast). $$
It is easy to see that $\xi^\ast\in\Sigma:=\{\xi\in \mathbb{H}^n\colon~r(\xi)=1\}$, the Heisenberg unit sphere. Moreover, inspired by \cite{Cohn-Lu-2001}, we set
$$ c_m :=\int_\Sigma|z^\ast|^m\,\mathrm{d}\mu(\xi^\ast)=\omega_{2n-1}\Gamma\bigg(\frac{1}{2}\bigg)\Gamma\bigg(\frac{Q+m-2}{4}\bigg)\Gamma\bigg(\frac{Q+m}{4}\bigg)^{-1},~\forall~m\in\{p,Q\} ,$$
where $\mathrm{d}\mu(\xi^\ast)$ is the Radon measure on $\Sigma$, $\omega_{2n-1}=2\pi^n\Gamma(n)^{-1}$ is the surface measure of the unit sphere in $\mathbb{R}^{2n}$. In addition, one has $$\alpha_Q:=Qc_Q^{\frac{1}{Q-1}}=Q\bigg(2\pi^n\Gamma\bigg(\frac{1}{2}\bigg)\Gamma\bigg(\frac{Q-1}{2}\bigg){\Gamma\bigg(\frac{Q}{2}\bigg)}^{-1}{\Gamma(n)}^{-1}\bigg)^{Q^\prime-1}.$$
Let $s>0$ be fixed and choose the Moser's function sequence $\{v_k\}_k$ as follows:
\begin{align}\label{eq3.99}
  v_k(\xi):=\begin{cases}
    c^{-\frac{1}{Q}}_Q(\ln k)^{\frac{Q-1}{Q}}\quad\quad\quad\quad\quad\text{if}\quad r(\xi)\in[0,sk^{-1}],\\
    c^{-\frac{1}{Q}}_Q(\ln k)^{-\frac{1}{Q}}\ln\big(\frac{s}{r(\xi)}\big)\quad\quad\text{if}\quad r(\xi)\in[sk^{-1},s],\\
    0 \quad\quad\quad\quad\quad\quad\quad\quad\quad\quad\quad\text{if}\quad r(\xi)\in[s,+\infty).
\end{cases}   
\end{align}
Due to \cite[Lemma 2.1]{Cohn-Lu-2001}, one has $|D_H r(\xi)|_H=\frac{|z|}{r(\xi)}$, and thus, we have
$$ |D_H v_k|_H=c^{-\frac{1}{Q}}_Q(\ln k)^{-\frac{1}{Q}}\frac{|z|}{r(\xi)^2}\chi _{B_s\setminus B_{sk^{-1}}}(\xi).$$
By direct calculations, we deduce that
\begin{align*}
    \int_{\mathbb{H}^n}|D_H v_k|^p_H\,\mathrm{d}\xi&=\int_\Sigma\int_{sk^{-1}}^{s}\bigg(c^{-\frac{1}{Q}}_Q(\ln k)^{-\frac{1}{Q}}\frac{|z^\ast|}{\eta}\bigg)^p\eta^{Q-1}\,\mathrm{d}\eta\,\mathrm{d}\mu(\xi^\ast)\\
    &=c^{-\frac{p}{Q}}_Q(\ln k)^{-\frac{p}{Q}}c_p\int_{sk^{-1}}^{s}\eta^{Q-p-1}\,\mathrm{d}\eta=\frac{c^{-\frac{p}{Q}}_Q(\ln k)^{-\frac{p}{Q}}c_p s^{Q-p}}{Q-p}\bigg(1-\frac{1}{k^{Q-p}}\bigg)
\end{align*}
and 
\begin{align*}
    \int_{\mathbb{H}^n}|D_H v_k|^Q_H\,\mathrm{d}\xi&=\int_\Sigma\int_{sk^{-1}}^{s}\bigg(c^{-\frac{1}{Q}}_Q(\ln k)^{-\frac{1}{Q}}\frac{|z^\ast|}{\eta}\bigg)^Q\eta^{Q-1}\,\mathrm{d}\eta\,\mathrm{d}\mu(\xi^\ast)\\
    &=c^{-1}_Q(\ln k)^{-1}c_Q\int_{sk^{-1}}^{s}\eta^{-1}\,\mathrm{d}\eta=c^{-1}_Q(\ln k)^{-1}c_Q\ln\bigg(\frac{s}{sk^{-1}}\bigg)=1.
\end{align*}
From the above estimates, we obtain $\|v_k\|\to 1$ as $k\to\infty$. In addition, let $\alpha>\alpha_{\vartheta,Q}$, and thus\textcolor{blue}{,} there exists $\varepsilon_0>0$ such that $\alpha=(1+\varepsilon_0)\alpha_{\vartheta,Q}$. It follows that
\begin{align*}
  \sup_{u\in W,~\|u\|\leq 1}\int_{\mathbb{H}^n}\frac{\Phi_{\alpha,j_0}(u)}{r(\xi)^\vartheta}\,\mathrm{d}\xi&\geq \int_{\{\xi\in \mathbb{H}^n\colon~r(\xi)<sk^{-1}\}}\frac{\Phi_{\alpha,j_0}(v_k/\|v_k\|)}{r(\xi)^\vartheta}\,\mathrm{d}\xi\\
  &=\int_\Sigma\int_{0}^{sk^{-1}}\Phi_{\alpha,j_0}\Bigg(\frac{c^{-\frac{1}{Q}}_Q(\ln k)^{\frac{Q-1}{Q}}}{\|v_k\|}\Bigg)\eta^{Q-\vartheta-1}\,\mathrm{d}\eta\,\mathrm{d}\mu(\xi^\ast)\\
  &\gtrsim \bigg(\int_\Sigma \,\mathrm{d}\mu(\xi^\ast)\bigg)\frac{\big(sk^{-1}\big)^{Q-\vartheta}}{Q-\vartheta}\operatorname{\exp}\Bigg(\alpha\Bigg|\frac{c^{-\frac{1}{Q}}_Q(\ln k)^{\frac{Q-1}{Q}}}{\|v_k\|}\Bigg|^{\frac{Q}{Q-1}}\Bigg)\\
  &=\bigg(\int_\Sigma \,\mathrm{d}\mu(\xi^\ast)\bigg)\frac{s^{Q-\vartheta}}{Q-\vartheta}\operatorname{\exp}\Bigg[(Q-\vartheta)\ln k\Bigg(\frac{1+\varepsilon_0}{\|v_k\|^{\frac{Q}{Q-1}}}-1\Bigg)\Bigg]\\ &\to+\infty\quad\text{as}\quad k\to\infty.
\end{align*}
The proof of the theorem is complete at this point.
\end{proof}
The next result can be obtained by applying the above theorem and a similar strategy developed in the proof of \cite[Lemma 4.1]{Nguyen-2019}. For simplicity, we will present an alternative approach to establish the following corollary.
\begin{corollary}\label{cor3.1}
For all $\alpha>0$, $\vartheta\in[0,Q)$ and $u\in W$, there holds
$$\frac{\Phi_{\alpha,j_0}(u)}{r(\xi)^{\vartheta}}\in L^1(\mathbb{H}^n).$$ 
\end{corollary}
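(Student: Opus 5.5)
The plan is to reduce the corollary to Theorem \ref{thm1.1} by splitting $u$ into a small-norm part and a compactly supported smooth part. By \eqref{eq2.3} and the fact that $\Phi_{\alpha,j_0}$ depends only on $|s|$, we may assume $u\geq 0$. Fix $\alpha>0$, $\vartheta\in[0,Q)$ and $u\in W$. By density of $C_0^\infty(\mathbb{H}^n)$ in $W$, choose $\varphi\in C_0^\infty(\mathbb{H}^n)$ with $\|u-\varphi\|\leq\delta$, where $\delta>0$ is small and will be fixed below. Put $v:=u-\varphi$, so that $u=v+\varphi$ with $\varphi$ bounded, say $|\varphi|\leq M$, and $\operatorname{supp}\varphi\subset B_R$.

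The first step is the pointwise splitting of the exponent. For any $\varepsilon>0$ there is $C_\varepsilon>0$ such that
$$|v+\varphi|^{\frac{Q}{Q-1}}\leq(1+\varepsilon)|v|^{\frac{Q}{Q-1}}+C_\varepsilon|\varphi|^{\frac{Q}{Q-1}}.$$
This follows from the elementary inequality used in the proof of Theorem \ref{thm1.1} together with Young's inequality. Take $\varepsilon=1$. Then on $B_R$ we get
$$\exp\big(\alpha|u|^{\frac{Q}{Q-1}}\big)\leq e^{\alpha C_1M^{Q/(Q-1)}}\exp\big(2\alpha|v|^{\frac{Q}{Q-1}}\big),$$
while outside $B_R$ we simply have $u=v$.

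The second step handles the two regions. Outside $B_R$, write $\Phi_{\alpha,j_0}(v)=\Phi_{\alpha\|v\|^{Q/(Q-1)},j_0}(v/\|v\|)$. Choose $\delta$ so small that $\alpha\delta^{Q/(Q-1)}\leq\alpha_{\vartheta,Q}$. Then Theorem \ref{thm1.1}, applied to $v/\|v\|$, shows that $\int_{\mathbb{H}^n\setminus B_R}\Phi_{\alpha,j_0}(v)\,r(\xi)^{-\vartheta}\,\mathrm{d}\xi<+\infty$. (If $v=0$ there is nothing to prove.) Inside $B_R$, we have $\Phi_{\alpha,j_0}(u)\leq\exp(\alpha|u|^{\frac{Q}{Q-1}})$, so it suffices to bound $\int_{B_R}\exp\big(2\alpha|v|^{\frac{Q}{Q-1}}\big)r(\xi)^{-\vartheta}\,\mathrm{d}\xi$. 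Since $B_R$ has finite measure and $r^{-\vartheta}\in L^1(B_R)$, we can write
$$\exp\big(2\alpha|v|^{\frac{Q}{Q-1}}\big)=\Phi_{2\alpha,j_0}(v)+\sum_{j<j_0}\frac{(2\alpha)^j}{j!}|v|^{\frac{Qj}{Q-1}}.$$
Shrinking $\delta$ further so that $2\alpha\delta^{Q/(Q-1)}\leq\alpha_{\vartheta,Q}$, Theorem \ref{thm1.1} bounds the first term.

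The finite polynomial sum is the step that needs care, because of the singular weight near the origin: it requires $|v|^{Qj/(Q-1)}r^{-\vartheta}\in L^1(B_R)$ for $0\leq j<j_0$. We treat it with H\"older's inequality. Pick $\tau>1$ with $\tau\vartheta<Q$, so that $r^{-\vartheta}\in L^{\tau}(B_R)$. We also have $|v|^{Qj/(Q-1)}\in L^{\tau'}(B_R)$ for every finite exponent, since $W\hookrightarrow L^m(B_R)$ for all $m\in[1,+\infty)$ by Lemma \ref{lem2.1}. Hence every term in the sum is integrable.

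Combining the two regions gives $\Phi_{\alpha,j_0}(u)\,r(\xi)^{-\vartheta}\in L^1(\mathbb{H}^n)$. The main obstacle is this local control of the low-order terms against the singular weight, and it is exactly what the H\"older argument in $B_R$ resolves.
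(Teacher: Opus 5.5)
Your proof is correct, but it takes a different route from the paper's. The paper symmetrizes. It uses the weak P\'olya--Szeg\"{o} inequality \eqref{eq2.2} together with \eqref{eq2.3} to get $u^\ast\in W$, and the Hardy--Littlewood inequality of Lemma \ref{lem2.22} (with $(r^{-\vartheta})^\ast=r^{-\vartheta}$) to replace $u$ by $u^\ast$. It then controls the exterior of $B_R$ by the radial decay estimate \eqref{eq3.4}, summing the exponential series against $\|u^\ast\|_{p^\ast}^{p^\ast}$. On $B_R$ it writes $u^\ast=v+u^\sharp(|B_R|)$ with $v\in HW^{1,Q}_0(B_R)$ and, after \eqref{eq3.6} and H\"older, invokes the bounded-domain singular Moser--Trudinger inequality of Lam--Lu--Tang in its ``finite for every exponent'' form.

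You instead split $u=v+\varphi$ by density with $\|v\|$ small, and rescale $v$ into Theorem \ref{thm1.1} via $\Phi_{\alpha,j_0}(v)=\Phi_{\alpha\|v\|^{Q/(Q-1)},j_0}(v/\|v\|)$. The only extra cost is the finitely many terms $j<j_0$ on $B_R$, and your H\"older step with $\tau\vartheta<Q$ plus the local embedding of Lemma \ref{lem2.1} handles them correctly.

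What your route buys is self-containedness and consistency with the paper's rearrangement-free philosophy. It uses nothing beyond Theorem \ref{thm1.1} and Lemma \ref{lem2.1}. It avoids the weak P\'olya--Szeg\"{o} inequality, the radial lemma, and the arbitrary-exponent bounded-domain inequality, which is itself usually proved by exactly your density trick. In effect it is the ``apply the above theorem'' route the paper mentions just before the corollary but does not carry out. In exchange, the paper's route gives an explicit exterior bound \eqref{eq3.5} depending only on $R$ and $\|u^\ast\|$. Neither argument bounds the whole integral in terms of $\|u\|$ alone, and no such bound can hold for $\alpha>\alpha_{\vartheta,Q}$ by the sharpness part of Theorem \ref{thm1.1}.

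A minor point of presentation: your two smallness conditions on $\delta$ depend only on $\alpha$, $\vartheta$ and $Q$. It is cleaner to fix $\delta$ with $2\alpha\delta^{Q/(Q-1)}\le\alpha_{\vartheta,Q}$ at the outset, and only then choose $\varphi$, $M$ and $R$.
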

\begin{proof}
Let $u^\ast$ be the Schwarz symmetrization of $|u|$. Then, we obtain from \eqref{eq2.2} and \eqref{eq2.3} that $u^\ast\in W$. Moreover, since  $(1/r(\xi)^\vartheta)^\ast=1/r(\xi)^\vartheta$, by using Lemma \ref{lem2.22} and  \eqref{eq2.1}, one has
 $$\int_{\mathbb{H}^n}\frac{\Phi_{\alpha,j_0}(u)} {r(\xi)^\vartheta}\,\mathrm{d}\xi\leq \int_{\mathbb{H}^n}\frac{\Phi_{\alpha,j_0}(u^\ast)} {r(\xi)^\vartheta}\,\mathrm{d}\xi .$$
 Now\textcolor{blue}{,} we fix $R>0$, then we can write
 $$\int_{\mathbb{H}^n}\frac{\Phi_{\alpha,j_0}(u^\ast)} {r(\xi)^\vartheta}\,\mathrm{d}\xi=\bigg(\int_{\mathbb{H}^n\setminus B_R}+\int_{B_R}\bigg)\frac{\Phi_{\alpha,j_0}(u^\ast)} {r(\xi)^\vartheta}\,\mathrm{d}\xi:=J_1+J_2. $$
 In addition, by using the properties of the Schwarz symmetrization, for all $u^\ast\in L^m(\mathbb{H}^n)$ with $m\geq1$ and $s>0$, we have
 \begin{equation}\label{eq3.333}
 \begin{aligned}
     \int_{\mathbb{H}^n}u^\ast(\xi)^{m}\,\mathrm{d}\xi& = \int_{\mathbb{H}^n}u^\sharp\big(\sigma_Q r(\xi)^Q\big)^{m}\,\mathrm{d}\xi\geq \int_{\{\xi\in\mathbb{H}^n:~r(\xi)\leq s\}}u^\sharp\big(\sigma_Q r(\xi)^Q\big)^m\,\mathrm{d}\xi\\
     &=\int_\Sigma\int_{0}^{s}u^\sharp\big(\sigma_Q \eta^Q\big)^m\eta^{Q-1}\,\mathrm{d}\eta\,\mathrm{d}\mu(\xi^\ast)\geq \bigg(\int_\Sigma\,\mathrm{d}\mu(\xi^\ast)\bigg)\frac{s^Q}{Q}u^\sharp\big(\sigma_Q s^Q\big)^m. 
 \end{aligned}
 \end{equation}
 In particular, for all $m\geq 1$, one has
 \begin{align}\label{eq3.4}
 u^\ast(\xi)\leq C(m,Q)r(\xi)^{-\frac{Q}{m}}\|u^\ast\|_{m},~\forall~u^\ast\in L^m(\mathbb{H}^n)  \quad\text{and}\quad\xi\in\mathbb{H}^n\setminus\{O\},     
 \end{align}
 where $C(m,Q)>0$ is a constant depending on $m$ and $Q$, which is defined as follows: 
 $$C(m,Q):=\bigg(\frac{Q}{\int_\Sigma\,\mathrm{d}\mu(\xi^\ast)}\bigg)^{\frac{1}{m}}\quad\text{with}\quad \Sigma:=\{\xi\in \mathbb{H}^n\colon~r(\xi)=1\}. $$
By using the monotone convergence theorem, \eqref{eq3.4} and Lemma \ref{lem2.1}, we have
\begin{equation}\label{eq3.5}
 \begin{aligned}
     J_1&\leq \frac{1}{R^\vartheta}\int_{\mathbb{H}^n\setminus B_R}\sum_{j=j_0}^{\infty}\frac{\alpha^j}{j!}|u^\ast|^{\frac{Qj}{Q-1}}\,\mathrm{d}\xi=\frac{1}{R^\vartheta}\sum_{j=j_0}^{\infty}\int_{\mathbb{H}^n\setminus B_R}\frac{\alpha^j}{j!}|u^\ast|^{\frac{Qj}{Q-1}}\,\mathrm{d}\xi\\ 
     &=\frac{1}{R^\vartheta}\sum_{j=j_0}^{\infty}\frac{\alpha^j}{j!}\int_{\mathbb{H}^n\setminus B_R}|u^\ast|^{\frac{Qj}{Q-1}-p^\ast}|u^\ast|^{p^\ast}\,\mathrm{d}\xi\\ 
     &\leq \frac{\Big(C(p,Q) R^{\frac{\vartheta-Q}{p^\ast}}\Big)^{-p^\ast}}{\|u^\ast\|^{p^\ast}_{p^\ast}}\sum_{j=j_0}^{\infty}\frac{\Big(\alpha\Big(C(p,Q)R^{-\frac{Q}{p^\ast}}\|u^\ast\|_{p^\ast}\Big)^{\frac{Q}{Q-1}}\Big)^j}{j!}\int_{\mathbb{H}^n\setminus B_R}|u^\ast|^{p^\ast}\,\mathrm{d}\xi\\ 
     &\leq \Big(C(p,Q) R^{\frac{\vartheta-Q}{p^\ast}}\Big)^{-p^\ast}\operatorname{\exp}\bigg(\alpha\Big(\widetilde{C}(p,Q)R^{-\frac{Q}{p^\ast}}\|u^\ast\|\Big)^{\frac{Q}{Q-1}}\bigg)<+\infty,
 \end{aligned}
 \end{equation}
 where $\widetilde{C}(p,Q)>0$ is a suitable constant depending on $p$ and $Q$. Define a function $v:B_R\to\mathbb{R}$ by
$$v(\xi):=u^\ast(\xi)-u^\sharp(|B_R|),~\forall~\xi\in B_R.$$
It follows from the embedding $W\hookrightarrow HW^{1,Q}(B_R)$ and \eqref{eq3.333} that $v\in HW_0^{1,Q}(B_R)$.
Notice that for all $q>1$ and $\eta>0$, we have 
 \begin{align}\label{eq3.6}
     (a+b)^q\leq (1+\eta)a^q+C_\eta b^q,~\forall~a,b>0,~\text{with}~~~C_\eta=(1-(1+\eta)^{-\frac{1}{q-1}})^{1-q}.
\end{align}
Let $\sigma>1$ be such that $\frac{1}{\sigma}+\frac{1}{\sigma^\prime}=1$ and $\vartheta<\frac{Q}{\sigma}$. Thus, for each $\alpha>0$, it follows from the Moser-Trudinger inequality \cite[Lemma 4.1]{Lam-Lu-Tang-2012}, H\"older's inequality, Lemma \ref{lem2.1}, \eqref{eq3.333} and \eqref{eq3.6} that
\begin{equation}\label{eq3.7}
\begin{aligned}
    J_2 &\leq \int_{B_R}\frac{\operatorname{\exp}\big(\alpha|u^\ast|^{\frac{Q}{Q-1}}\big)}{r(\xi)^\vartheta}\,\mathrm{d}\xi\leq \int_{B_R}\frac{\operatorname{\exp}\Big(\alpha(1+\eta)|v|^{\frac{Q}{Q-1}}+\alpha C_\eta|u^\sharp(|B_R|)|^{\frac{Q}{Q-1}}\Big)}{r(\xi)^\vartheta}\,\mathrm{d}\xi \\ 
    &\leq \Bigg(\int_{B_R}\operatorname{\exp}\Big(\alpha \sigma^\prime C_\eta|u^\sharp(|B_R|)|^{\frac{Q}{Q-1}}\Big)\,\mathrm{d}\xi \Bigg)^{\frac{1}{\sigma^\prime}}\Bigg(\int_{B_R}\frac{\operatorname{\exp}\big(\alpha \sigma(1+\eta)|v|^{\frac{Q}{Q-1}}\big)}{r(\xi)^{\vartheta \sigma}}\,\mathrm{d}\xi \Bigg)^{\frac{1}{\sigma}}\\ 
    &\leq \bigg(\operatorname{\exp}\Big(\alpha \sigma^\prime C_\eta\Big(C(p,Q)R^{-\frac{Q}{p^\ast}}\|u^\ast\|_{p^\ast}\Big)^{\frac{Q}{Q-1}}\Big)|B_R|\bigg)^{\frac{1}{\sigma^\prime}}\Bigg(\int_{B_R}\frac{\operatorname{\exp}\big(\alpha \sigma(1+\eta)|v|^{\frac{Q}{Q-1}}\big)}{r(\xi)^{\vartheta \sigma}}\,\mathrm{d}\xi \Bigg)^{\frac{1}{\sigma}} \\
    &<+\infty.
\end{aligned}
\end{equation}
In view of \eqref{eq3.5} and \eqref{eq3.7}, we conclude the proof.
\end{proof}
\section{Sharp singular concentration-compactness principle: Proof of Theorem \ref{thm1.2}}\label{sec4}
This section is devoted to proving a concentration-compactness principle of the singular Moser-Trudinger inequality discussed in the previous section (see Theorem \ref{thm1.1}), which is also based on a rearrangement-free approach. For a detailed study, refer to \cite{Chen-Lu-Zhu-2021, Li-Lu-Zhu-2018, Li-Lu-Zhu-2021}. Moreover, we can now start to prove the following theorem. 
\begin{proof}[\bf{Proof of Theorem \ref{thm1.2}}.]
To prove Theorem \ref{thm1.2}, first notice that $\Phi_{\alpha,j_0}(|u|)=\Phi_{\alpha,j_0}(u)$, so without loss of generality, we can assume that $\{u_k\}_k\subset W$ and its weak limit $u$ are nonnegative functions. Now, by using the weak lower semicontinuity of the norm in $W$, one has
$$\|u\|\leq\liminf_{k\to\infty}\|u_k\|=1. $$
In conclusion, the proof of Theorem \ref{thm1.2} is divided into two cases as follows:

\text{Case{-}1:} In this case, we assume that $\|u\|=1$. Notice that $W$ is a reflexive Banach space, $u_k\rightharpoonup u$ in $W$ and $\|u_k\|\to \|u\|$ in $\mathbb{R}$ as $k\to\infty$. Thus, we obtain from \cite[Corollary A.2]{Autuori-Pucci-2013} that $u_k\to u$ in $W$ as $k\to\infty$. In addition, by employing a similar idea as in the proof of Case{-}$2$ \cite[Theorem 1.5]{Shen-Radulescu-2024}, up to a subsequence of $\{u_k\}_k\subset W$ still denoted by itself, there exists some $v\in W$ such that $|u_k(\xi)|\leq v(\xi)$ for a.e. $\xi\in\mathbb{H}^n$. Now, by using Corollary \ref{cor3.1} and the Lebesgue dominated convergence theorem that
\begin{align}\label{eq3.8}
\lim_{k\to\infty} \int_{\mathbb{H}^n}\frac{\Phi_{\zeta\alpha_{\vartheta,Q},j_0}(u_k)}{r(\xi)^\vartheta}\,\mathrm{d}\xi=\int_{\mathbb{H}^n}\frac{\Phi_{\zeta\alpha_{\vartheta,Q},j_0}(u)}{r(\xi)^\vartheta}\,\mathrm{d}\xi<+\infty.   
\end{align}
Moreover, again by Corollary \ref{cor3.1} and \eqref{eq3.8}, one can easily conclude that \eqref{eq1.13} holds.

\text{Case{-}2:} In this case, we assume that $0<\|u\|<1$. Next, we shall verify this situation by the method of contradiction. Assume that there exists some $0<\zeta<Z_Q(u)$ such that
\begin{align*}
\sup_{k\in\mathbb{N}}\int_{\mathbb{H}^n}\frac{\Phi_{\zeta\alpha_{\vartheta,Q},j_0}(u_k)}{r(\xi)^\vartheta}\,\mathrm{d}\xi=+\infty.
\end{align*}
Let $L>0$ be a constant and $\mathbb{U}:=\{\xi\in\mathbb{H}^n\colon~u_k(\xi)\geq L\}$. It follows that $\mathbb{U}$ is a bounded set. Indeed, by the given hypothesis and Lemma \ref{lem2.1}, one has
$$ |\mathbb{U}|= \int_\mathbb{U} \,\mathrm{d}\xi\leq L^{-p^\ast}\int_\mathbb{U} |u_k|^{p^\ast} \,\mathrm{d}\xi\lesssim \|u_k\|^{p^\ast}\lesssim 1.$$
We can write
\begin{align*}
 \int_{\mathbb{H}^n}\frac{\Phi_{\zeta\alpha_{\vartheta,Q},j_0}(u_k)}{r(\xi)^\vartheta}\,\mathrm{d}\xi=\bigg(\int_{\mathbb{H}^n\setminus\mathbb{U}}+\int_{\mathbb{U}}\bigg)\frac{\Phi_{\zeta\alpha_{\vartheta,Q},j_0}(u_k)}{r(\xi)^\vartheta}\,\mathrm{d}\xi:=J_1+J_2.   
\end{align*}
In virtue of $\frac{Qj_0}{Q-1}\geq p^\ast$ and Lemma \ref{lem2.1}, we have
\begin{align*}
J_1&=\int_{\mathbb{H}^n\setminus \mathbb{U}}\sum_{j=j_0}^{\infty} \frac{\big(\zeta\alpha_{\vartheta,Q} L^{Q^\prime}\big)^j}{j!}\bigg|\frac{u_k}{L}\bigg|^{\frac{Qj}{Q-1}}\frac{1}{r(\xi)^\vartheta} \,\mathrm{d}\xi\leq \sum_{j=j_0}^{\infty} \frac{\big(\zeta\alpha_{\vartheta,Q} L^{Q^\prime}\big)^j}{j!}\int_{\mathbb{H}^n\setminus \mathbb{U}}\bigg|\frac{u_k}{L}\bigg|^{\frac{Qj_0}{Q-1}}\frac{1}{r(\xi)^\vartheta} \,\mathrm{d}\xi\\
&\lesssim \int_{\mathbb{H}^n\setminus \mathbb{U}}\frac{|u_k|^{\frac{Qj_0}{Q-1}}}{r(\xi)^\vartheta} \,\mathrm{d}\xi\lesssim\int_{\{\xi\in\mathbb{H}^n\colon~r(\xi)<1\}}\frac{1}{r(\xi)^\vartheta} \,\mathrm{d}\xi+\int_{\{\xi\in\mathbb{H}^n\colon~r(\xi)\geq1\}}|u_k|^{\frac{Qj_0}{Q-1}}\,\mathrm{d}\xi~\lesssim 1.
\end{align*}
Define $v_k(\xi):=u_k(\xi)-L$ in $\mathbb{U}$, then we obtain from \eqref{eq3.6} that
\begin{align*}
  J_2&\leq \int_{\mathbb{U}}\frac{\operatorname{\exp}\big(\zeta\alpha_{\vartheta,Q}|u_k|^{\frac{Q}{Q-1}}\big)}{r(\xi)^\vartheta}\,\mathrm{d}\xi\leq\int_{\mathbb{U}}\frac{\operatorname{\exp}\big(\zeta\alpha_{\vartheta,Q}(1+\eta)|v_k|^{\frac{Q}{Q-1}}+\zeta\alpha_{\vartheta,Q}C_\eta L^{\frac{Q}{Q-1}}\big)}{r(\xi)^\vartheta}\,\mathrm{d}\xi\\
    &\lesssim \int_{\mathbb{U}}\frac{\operatorname{\exp}\big(\zeta\alpha_{\vartheta,Q}(1+\eta)|v_k|^{\frac{Q}{Q-1}}\big)}{r(\xi)^\vartheta}\,\mathrm{d}\xi.   
\end{align*}
From the above estimates, we obtain for $\eta>0$ sufficiently small that
\begin{align}\label{eq3.9}
\sup_{k\in\mathbb{N}}\int_{\mathbb{U}}\frac{\operatorname{\exp}\big(\bar{\zeta}\alpha_{\vartheta,Q}|v_k|^{\frac{Q}{Q-1}}\big)}{r(\xi)^\vartheta}\,\mathrm{d}\xi=+\infty,\quad\text{where}\quad \bar{\zeta}:=\zeta(1+\eta)<Z_Q(u).    
\end{align}
For $L>0$ and $v\geq 0$, we define the truncation functions $G^L$ and $G_L$ as follows:
$$ G^L(v):=\min\{v,L\}\quad\text{and}\quad G_L(v):=v-G^L(v).$$
Notice that $\|G^L(u)\|\to\|u\|$ as $L\to\infty$. It follows that for $\bar{\zeta}\in(0,Z_Q(u))$, we can assume $L>0$ sufficiently large enough such that
\begin{align}\label{eq3.10}
    \frac{1-\| u\|^Q}{1-\|G^L(u)\|^Q}>\bigg(\frac{\bar{\zeta}}{Z_Q(u)}\bigg)^{Q-1}.
\end{align}
To finish the proof, it suffices to show that
\begin{align}\label{eq3.11}
    \limsup_{k\to\infty}\|\bar{\zeta}^{~\frac{Q-1}{Q}} D_H v_k\|_{L^Q(\mathbb{U})}<1.
\end{align}
Indeed, if not, up to a subsequence still denoted by itself, we can assume 
$$ \| D_H v_k\|^Q_{L^Q(\mathbb{U})}=\| D_H G_L( u_k)\|^Q_Q\geq \bar{\zeta}^{~-(Q-1)}+o_k(1)\quad\text{as}\quad k\to\infty.  $$
This yields at once that
\begin{equation}\label{eq3.12}
\begin{aligned}
&\bar{\zeta}^{~-(Q-1)}+\| G^L(u_k)\|^Q+o_k(1)\\ 
&\qquad=\bar{\zeta}^{~-(Q-1)}+\|D_H G^L(u_k)\|^Q_p+\|D_H G^L(u_k)\|^Q_Q+o_k(1)\\ 
&\qquad\leq \|D_H G^L(u_k)\|^Q_p+\|D_H G^L(u_k)\|^Q_Q+\| D_H G_L( u_k)\|^Q_Q\\ 
&\qquad=\|D_H u_k\|^Q_{L^p(\mathbb{H}^n\setminus \mathbb{U})}+\|D_H u_k\|^Q_{L^Q(\mathbb{H}^n\setminus \mathbb{U})}+\|D_H u_k\|^Q_{L^Q(\mathbb{U})}\\ 
&\qquad\leq \|D_H u_k\|^Q_p+\|D_H u_k\|^Q_Q=\|u_k\|^Q=1\quad\text{as}\quad k\to\infty.
\end{aligned}
\end{equation}
Let us first fix $L>0$. It is easy to see that the sequence $\{G^L(u_k)\}_k\subset W$ is bounded in $W$. Thus, up to a subsequence still denoted by itself, we can easily obtain $G^L(u_k)\rightharpoonup G^L(u)$ in $W$ and $G^L(u_k)\to G^L(u)$ a.e. in $\mathbb{H}^n$ as $k\to\infty$. In addition, due to the weakly lower semicontinuity of the norm in $W$, \eqref{eq3.10} and \eqref{eq3.12},  one has
\begin{align*}
    \bar{\zeta} &\geq \Big(1-\liminf_{k\to\infty}\| G^L(u_k)\|^Q\Big)^{-\frac{1}{Q-1}}\geq \Big(1-\| G^L(u)\|^Q\Big)^{-\frac{1}{Q-1}}\\ 
    &>\frac{\bar{\zeta}}{Z_Q(u)} \big(1-\| u\|^Q\big)^{-\frac{1}{Q-1}}=\bar{\zeta}, \end{align*}
which is a contradiction. It follows that \eqref{eq3.11} holds. Hence, up to a subsequence still denoted by itself, we can assume that $$\bar{\zeta} \alpha_{\vartheta,Q}\|D_H v_k\|^{\frac{Q}{Q-1}}_{L^Q(\mathbb{U})}<\alpha_{\vartheta,Q},~~\forall~k\in\mathbb{N}.$$ In conclusion, we obtain from the Moser-Trudinger inequality \cite[Lemma 4.1]{Lam-Lu-Tang-2012} that
$$ \sup_{k\in\mathbb{N}}\int_{\mathbb{U}}\frac{\operatorname{\exp}\big(\bar{\zeta}\alpha_{\vartheta,Q}|v_k|^{\frac{Q}{Q-1}}\big)}{r(\xi)^\vartheta}\,\mathrm{d}\xi=\sup_{k\in\mathbb{N}}\int_{\mathbb{U}}\frac{\operatorname{\exp}\Big(\bar{\zeta}\alpha_{\vartheta,Q}\|D_H v_k\|^{\frac{Q}{Q-1}}_{L^Q(\mathbb{U})}|v_k/\|D_H v_k\|_{L^Q(\mathbb{U})}|^{\frac{Q}{Q-1}}\Big)}{r(\xi)^\vartheta}\,\mathrm{d}\xi<+\infty, $$
which contradicts \eqref{eq3.9}. This completes the proof in this situation.

Next, to check the validity of the sharpness of $Z_Q(u)$, it is enough to construct a sequence $\{u_k\}_k\subset W$ and a function $u\in W$ satisfying
\begin{align}\label{eq3.13}
    \|u_k\|=1,\quad u_k\rightharpoonup u\quad\text{in}\quad W\quad\text{as}\quad k\to\infty,\quad\|u\|=\varrho<1
\end{align}
but 
\begin{align*}
   \lim_{k\to\infty} \int_{\mathbb{H}^n}\frac{\Phi_{\zeta\alpha_{\vartheta,Q},j_0}(u_k)}{r(\xi)^\vartheta}\,\mathrm{d}\xi=+\infty,~~\forall~ \zeta>Z_Q(u).
\end{align*}
Let $s>0$ be fixed and we define a sequence $\{\psi_k\}_k\subset W$ as follows:
\begin{equation}\label{eq3.14}
  \psi_k(\xi):=\begin{cases}
   \big( k \alpha_Q^{-1}\big)^{\frac{Q-1}{Q}}\quad\quad\quad\quad\quad\quad\text{if}\quad r(\xi)\in\big[0,s\operatorname{\exp}(-kQ^{-1})\big],\\
   \big( k \alpha_Q^{-1}\big)^{\frac{Q-1}{Q}}\frac{Q}{k} \ln\big(\frac{s}{r(\xi)}\big)\quad\quad~~\text{if}\quad r(\xi)\in\big[s\operatorname{\exp}(-kQ^{-1}),s\big],\\
    0 \quad\quad\quad\quad\quad\quad\quad\quad\quad\quad\quad~~\text{if}\quad r(\xi)\in[s,+\infty),
    \end{cases}
\end{equation}
where $\alpha_Q=Qc_Q^{\frac{1}{Q-1}}$. By arguing similarly to the proof of Theorem \ref{thm1.1}, we derive 
$$ |D_H \psi_k|_H=\big( k \alpha_Q^{-1}\big)^{\frac{Q-1}{Q}}\frac{Q}{k} \frac{|z|}{r(\xi)^2}\chi _{B_s\setminus B_{s\operatorname{\exp}(-kQ^{-1})}}(\xi).$$
It follows at once that
\begin{align*}
    \int_{\mathbb{H}^n}|D_H \psi_k|^p_H\,\mathrm{d}\xi&=\int_\Sigma\int_{s\operatorname{\exp}(-kQ^{-1})}^{s}\bigg(\big( k \alpha_Q^{-1}\big)^{\frac{Q-1}{Q}}\frac{Q}{k}\frac{|z^\ast|}{\eta}\bigg)^p\eta^{Q-1}\,\mathrm{d}\eta\,\mathrm{d}\mu(\xi^\ast)\\
    &=\alpha^{-\frac{(Q-1)p}{Q}}_Q Q^p k^{-\frac{p}{Q}}c_p\int_{s\operatorname{\exp}(-kQ^{-1})}^{s}\eta^{Q-p-1}\,\mathrm{d}\eta\\
    &=\frac{\alpha^{-\frac{(Q-1)p}{Q}}_Q Q^p k^{-\frac{p}{Q}}c_p s^{Q-p}}{Q-p}\Big(1-\operatorname{\exp}\big(-k(Q-p)Q^{-1}\big)\Big)
\end{align*}
and 
\begin{align*}
    \int_{\mathbb{H}^n}|D_H \psi_k|^Q_H\,\mathrm{d}\xi&=\int_\Sigma\int_{s\operatorname{\exp}(-kQ^{-1})}^{s}\bigg(\big( k \alpha_Q^{-1}\big)^{\frac{Q-1}{Q}}\frac{Q}{k}\frac{|z^\ast|}{\eta}\bigg)^Q\eta^{Q-1}\,\mathrm{d}\eta\,\mathrm{d}\mu(\xi^\ast)\\
    &=\alpha^{-(Q-1)}_Q  Q^Q k^{-1}c_Q\int_{s\operatorname{\exp}(-kQ^{-1})}^{s}\eta^{-1}\,\mathrm{d}\eta\\
    &=\alpha^{-(Q-1)}_Q  Q^Q k^{-1} c_Q\ln\bigg(\frac{s}{s\operatorname{\exp}(-kQ^{-1})}\bigg)=\alpha^{-(Q-1)}_Q  Q^{Q-1}c_Q=1.
\end{align*}
By standard arguments, one has
$$\psi_k\rightharpoonup 0\quad\text{in}\quad W,\quad \|D_H \psi_k\|^p_p\to 0\quad\text{as}\quad k\to\infty\quad\text{and}\quad \|D_H \psi_k\|^Q_Q=1.$$
Let $B>0$ be a constant and $S=3s$. Define a function $u\colon \mathbb{H}^n\to\mathbb{R}$ as follows:
$$ u(\xi):=\begin{cases}
    B\quad\quad\quad\quad\quad\quad\quad\quad\text{if}\quad r(\xi)\in[0,\frac{2S}{3}],\\
    3B\Big(1-\frac{r(\xi)}{S}\Big)\quad\quad\quad\text{if}\quad r(\xi)\in[\frac{2S}{3},S],\\
    0\quad\quad\quad\quad\quad\quad\quad\quad\text{if}\quad r(\xi)\in[S,+\infty).
\end{cases} $$
Direct computations yield that
$$|D_H u|_H=\frac{3B}{S}\frac{|z|}{r(\xi)}\chi_{B_S\setminus B_{\frac{2S}{3}}}(\xi). $$
In conclusion, we deduce that
\begin{align*}
    \|u\|^Q &=\bigg(\int_{\mathbb{H}^n}|D_H u|^p_H\,\mathrm{d}\xi\bigg)^{\frac{Q}{p}}+\int_{\mathbb{H}^n}|D_H u|^Q_H\,\mathrm{d}\xi\\
    &=\bigg(\int_\Sigma\int_{\frac{2S}{3}}^{S}\bigg(\frac{3B}{S}|z^\ast|\bigg)^p\eta^{Q-1}\,\mathrm{d}\eta\,\mathrm{d}\mu(\xi^\ast)\bigg)^{\frac{Q}{p}}+\int_\Sigma\int_{\frac{2S}{3}}^{S}\bigg(\frac{3B}{S}|z^\ast|\bigg)^Q\eta^{Q-1}\,\mathrm{d}\eta\,\mathrm{d}\mu(\xi^\ast)\\
    &=\bigg(\frac{3B}{S}\bigg)^Q\Bigg[\bigg(c_p\int_{\frac{2S}{3}}^{S}\eta^{Q-1}\,\mathrm{d}\eta\bigg)^{\frac{Q}{p}}+c_Q\int_{\frac{2S}{3}}^{S}\eta^{Q-1}\,\mathrm{d}\eta\Bigg]\\
    &=B^Q\Bigg[\Bigg(\frac{S^{Q-p}\big(3^Q-2^Q\big)c_p}{3^{Q-p}Q}\Bigg)^{\frac{Q}{p}}+\frac{\big(3^Q-2^Q\big)c_Q}{Q}\Bigg].
\end{align*}
Thus, we can find $B>0$ in such a way that $\|u\|=\varrho<1$. Let us define a sequence $\{w_k\}_k\subset W$ as follows:
$$w_k(\xi):=u(\xi)+\big(1-\varrho^Q\big)^{\frac{1}{Q}}\psi_k(\xi)\in W,~\forall~\xi\in\mathbb{H}^n. $$
Notice that $B_s\cap (\mathbb{H}^n\setminus B_{\frac{2S}{3}})=\emptyset$, we thank to $S=3s$. It follows that $D_H u$ and $D_H \psi_k$ have disjoint supports. By direct computations, we have
$$ \|D_H w_k\|^Q_Q=\|D_H u\|^Q_Q+\big(1-\varrho^Q\big)\quad\text{and}\quad \|D_H w_k\|^p_p=\|D_H u\|^p_p+\tau_k,$$
where $\tau_k\to 0$ as $k\to\infty$. This shows that $\|w_k\|^Q\to 1$ as $k\to\infty$. {Hence}, without loss of generality, we can assume that $\|w_k\|^Q=1+\sigma_k$, where $\sigma_k\to 0$ as $k\to\infty$. Next, we choose a sequence $\{u_k\}_k\subset W$ as follows:
$$ u_k(\xi):=\frac{w_k(\xi)}{(1+\sigma_k)^{\frac{1}{Q}}},~\forall~\xi\in\mathbb{H}^n. $$
This implies the validity of \eqref{eq3.13}. Moreover, we assume that $\zeta>Z_Q(u)$ and hence there exists $\varepsilon_0>0$ such that $\zeta=(1+\varepsilon_0)Z_Q(u)$. Thus, one can easily see that
\begin{align*}
\int_{\mathbb{H}^n}\frac{\Phi_{\zeta\alpha_{\vartheta,Q},j_0}(u_k)}{r(\xi)^\vartheta}&\,\mathrm{d}\xi\geq \int_{\big\{\xi\in \mathbb{H}^n\colon ~r(\xi)<s\operatorname{\exp}(-kQ^{-1})\big\}}\frac{\Phi_{\zeta\alpha_{\vartheta,Q},j_0}\Big((1+\sigma_k)^{-\frac{1}{Q}}\big(u+(1-\varrho^Q)^{\frac{1}{Q}}\psi_k\big)\Big)}{r(\xi)^\vartheta}\,\mathrm{d}\xi\\
  &=\int_\Sigma\int_{0}^{s\operatorname{\exp}(-kQ^{-1})}\frac{\Phi_{\zeta\alpha_{\vartheta,Q},j_0}\Big((1+\sigma_k)^{-\frac{1}{Q}}\Big(B+(1-\varrho^Q)^{\frac{1}{Q}}\big( k \alpha_Q^{-1}\big)^{\frac{Q-1}{Q}}\Big)\Big)}{\eta^{\vartheta-Q+1}}\,\mathrm{d}\eta\,\mathrm{d}\mu(\xi^\ast)\\
  &\geq \bigg(\int_\Sigma \mathrm{d}\mu(\xi^\ast)\bigg)\frac{s^{Q-\vartheta}\operatorname{\exp}\big(-k\big(1-\frac{\vartheta}{Q}\big)\big)}{Q-\vartheta}\Phi_{\zeta\alpha_{\vartheta,Q},j_0}\Big((1+\sigma_k)^{-\frac{1}{Q}}(1-\varrho^Q)^{\frac{1}{Q}}\big( k \alpha_Q^{-1}\big)^{\frac{Q-1}{Q}}\Big)\\
  & \gtrsim \bigg(\int_\Sigma \mathrm{d}\mu(\xi^\ast)\bigg)\frac{s^{Q-\vartheta}\operatorname{\exp}\big(-k\big(1-\frac{\vartheta}{Q}\big)\big)}{Q-\vartheta}\operatorname{\exp}\Bigg[\zeta\alpha_{\vartheta,Q}\Bigg|\frac{(1-\varrho^Q)^{\frac{1}{Q}}\big( k \alpha_Q^{-1}\big)^{\frac{Q-1}{Q}}}{(1+\sigma_k)^{\frac{1}{Q}}}\Bigg|^{\frac{Q}{Q-1}}\Bigg]\\
  &=\bigg(\int_\Sigma \mathrm{d}\mu(\xi^\ast)\bigg)\frac{s^{Q-\vartheta}}{Q-\vartheta}\operatorname{\exp}\Bigg[\bigg(1-\frac{\vartheta}{Q}\bigg) k\Bigg(\frac{1+\varepsilon_0}{(1+\sigma_k)^{\frac{1}{Q-1}}}-1\Bigg)\Bigg]\to+\infty\quad\text{as}\quad k\to\infty.
\end{align*}
This completes the proof of the sharpness of $Z_Q(u)$ and thus\textcolor{blue}{,} the theorem is well established.
\end{proof}
\section{Asymptotic behavior of  sharp subcritical singular Moser-Trudinger type inequality: Proof of
 Theorem \ref{thm5.1}}\label{sec5}
 In this section, we shall discuss the asymptotic behavior of the supremum $\text{SCMTI}(\zeta,\vartheta)$ for the sharp subcritical singular Moser-Trudinger inequality. To prove this, we first establish the following lemma. 
\begin{lemma}\label{lem4.1}
The following result holds:
$$\text{SCMTI}(\zeta,\vartheta)=\sup_{u\in W,~\|D_H u\|_{Q}\leq 1,~\|D_H u\|_p=1}\int_{\mathbb{H}^n}\frac{\Phi_{\zeta(1-\frac{\vartheta}{Q}),j_0}(u)}{r(\xi)^\vartheta}\,\mathrm{d}\xi.$$    
\end{lemma}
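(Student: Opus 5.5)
The natural tool is the dilation invariance of the $Q$-Dirichlet energy. For $u\in W$ and $\lambda>0$ we set $u_\lambda(\xi):=u(\delta_\lambda(\xi))$. The plan is to show that the quotient defining $\text{SCMTI}(\zeta,\vartheta)$ is unchanged under $u\mapsto u_\lambda$, and then to choose $\lambda$ so that $\|D_H u_\lambda\|_p=1$.

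First we record the scaling identities. The vector fields $X_j,Y_j$ are homogeneous of degree one with respect to $\delta_\lambda$, so $D_H u_\lambda=\lambda (D_Hu)\circ\delta_\lambda$. The change of variables $\mathrm{d}(\delta_\lambda\xi)=\lambda^Q\mathrm{d}\xi$ then gives
\begin{align*}
\|D_H u_\lambda\|_Q^Q=\|D_Hu\|_Q^Q,\qquad \|D_H u_\lambda\|_p^p=\lambda^{p-Q}\|D_Hu\|_p^p .
\end{align*}
Since $r(\delta_\lambda\xi)=\lambda r(\xi)$, we also get
$$\int_{\mathbb{H}^n}\frac{\Phi_{\zeta(1-\frac{\vartheta}{Q}),j_0}(u_\lambda)}{r(\xi)^\vartheta}\,\mathrm{d}\xi=\lambda^{\vartheta-Q}\int_{\mathbb{H}^n}\frac{\Phi_{\zeta(1-\frac{\vartheta}{Q}),j_0}(u)}{r(\xi)^\vartheta}\,\mathrm{d}\xi.$$
Using $p^\ast=\frac{Qp}{Q-p}$, the denominator transforms as
$$\|D_Hu_\lambda\|_p^{p^\ast(1-\frac{\vartheta}{Q})}=\lambda^{\frac{p-Q}{p}\cdot\frac{Qp}{Q-p}\cdot\frac{Q-\vartheta}{Q}}\|D_Hu\|_p^{p^\ast(1-\frac{\vartheta}{Q})}=\lambda^{\vartheta-Q}\|D_Hu\|_p^{p^\ast(1-\frac{\vartheta}{Q})}.$$
The factors $\lambda^{\vartheta-Q}$ cancel, so the quotient is dilation invariant. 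Moreover the constraint $\|D_H u_\lambda\|_Q\le 1$ is preserved, and $u_\lambda\in W$ whenever $u\in W$, because dilation maps $C_0^\infty$ to itself and preserves both gradient integrals up to constants.

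Next comes the reduction. Let $u\in W\setminus\{0\}$ with $\|D_Hu\|_Q\le1$; then $\|D_Hu\|_p>0$. We choose $\lambda=\|D_Hu\|_p^{p/(Q-p)}$, so that $\|D_Hu_\lambda\|_p=1$ and $\|D_H u_\lambda\|_Q\le 1$. The quotient for $u$ then equals the integral for $u_\lambda$, which is an admissible element of the right-hand supremum. Hence $\text{SCMTI}\le$ RHS. Conversely, any $u$ with $\|D_Hu\|_p=1$ has denominator equal to $1$, so RHS $\le\text{SCMTI}$.

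The main point needing care is the exponent bookkeeping in the denominator scaling. The identity works exactly because the power $p^\ast(1-\vartheta/Q)$ is tuned to the dilation weights, and everything else is routine. We will also note that $u=0$ is excluded, either implicitly or by the convention $0/0$.
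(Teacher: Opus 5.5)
Your proposal is correct and follows essentially the same route as the paper: the dilation $u\mapsto u\circ\delta_\lambda$ with $\lambda=\|D_Hu\|_p^{p/(Q-p)}=\|D_Hu\|_p^{p^\ast/Q}$, which preserves $\|D_Hu\|_Q$, normalizes $\|D_Hu\|_p$ to $1$, and rescales the singular integral by $\lambda^{\vartheta-Q}$. Beyond the paper, you also state both inequalities explicitly and exclude $u=0$; the paper leaves these points implicit.
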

\begin{proof}
Let us assume that $u\in W$ and $\|D_H u\|_Q\leq 1$. Define the dilation function $\delta_\theta\colon\mathbb{H}^n\to \mathbb{H}^n$ by $$ \delta_\theta(\xi):=(\theta z, \theta^2t),~\forall~\xi=(z,t)\in \mathbb{H}^n~~\text{and}~~\theta>0.$$ In addition, we define a function $\psi\colon\mathbb{H}^n\to\mathbb{R}$ by
$$ \psi(\xi):=u(\delta_\theta(\xi))\quad\text{with}\quad \theta:=\|D_H u\|_p^{\frac{p^\ast}{Q}}>0. $$
Since the Jacobian determinant of $\delta_\theta$ coincides with $\theta^Q$, we have
$$ D_H \psi(\xi)=\theta D_H u(\delta_\theta(\xi)),~\|D_H \psi\|_Q=\|D_H u\|_Q\leq 1\quad\text{and}\quad \|D_H \psi\|_p=\theta^{-\frac{Q}{p^\ast}}\|D_H u\|_p =1.$$
Now, by using the fact that $r(\delta_\theta(\xi))=\theta r(\xi)$, one has
\begin{align*}
 \int_{\mathbb{H}^n}\frac{\Phi_{\zeta(1-\frac{\vartheta}{Q}),j_0}(\psi)}{r(\xi)^\vartheta}\,\mathrm{d}\xi & =\theta ^\vartheta\int_{\mathbb{H}^n}\frac{\Phi_{\zeta(1-\frac{\vartheta}{Q}),j_0}(u(\delta_\theta(\xi))}{r(\delta_\theta(\xi))^\vartheta}\,\mathrm{d}\xi=\theta ^{\vartheta-Q}\int_{\mathbb{H}^n}\frac{\Phi_{\zeta(1-\frac{\vartheta}{Q}),j_0}(u(\xi))}{r(\xi)^\vartheta}\,\mathrm{d}\xi\\
 &=\|D_H u\|_p^{-p^\ast(1-\frac{\vartheta}{Q})}\int_{\mathbb{H}^n}\frac{\Phi_{\zeta(1-\frac{\vartheta}{Q}),j_0}(u)}{r(\xi)^\vartheta}\,\mathrm{d}\xi.
\end{align*}
This finishes the proof.
\end{proof}
In virtue of the above lemma, we can always assume that $\|D_H u\|_p=1$ in the sharp subcritical singular Moser-Trudinger inequality. Now, we can prove Theorem \ref{thm5.1} as follows.
\begin{proof}[\bf{Proof of Theorem \ref{thm5.1}}.]
It follows from the density argument of $C_0^\infty(\mathbb{H}^n)$ in $W$ and Lemma \ref{lem4.1} that without loss of generality, we can choose $u\in C_0^\infty(\mathbb{H}^n)\setminus\{0\}$, $u\geq 0$, $\|D_Hu\|_Q\leq 1$ and $\|D_Hu\|_p= 1$. Denote
$$ \mathbb{U}:=\Bigg\{\xi\in\mathbb{H}^n\colon~u(\xi)>\bigg(1-\Big(\frac{\zeta}{\alpha_Q}\Big)^{Q-1}\bigg)^{\frac{1}{Q}}\bigg\}. $$
By using the embedding $S^{1,p}(\mathbb{H}^n)\hookrightarrow L^{p^\ast}(\mathbb{H}^n)$, one sees that $\mathbb{U}$ is a bounded set. In addition, we have
\begin{align}\label{eq4.1}
 |\mathbb{U}|= \int_\mathbb{U} \,\mathrm{d}\xi\leq \bigg(1-\Big(\frac{\zeta}{\alpha_Q}\Big)^{Q-1}\bigg)^{-\frac{p^\ast}{Q}}\|u\|_{p^\ast} ^{p^\ast}\leq \bigg(1-\Big(\frac{\zeta}{\alpha_Q}\Big)^{Q-1}\bigg)^{-\frac{p^\ast}{Q}} C(p,Q),   
\end{align}
where $C(p,Q)>0$ is a suitable constant depending on $p$ and $Q$. Notice that one can write
\begin{align*}
 \int_{\mathbb{H}^n}\frac{\Phi_{\zeta(1-\frac{\vartheta}{Q}),j_0}(u)}{r(\xi)^\vartheta}\,\mathrm{d}\xi=\bigg(\int_{\mathbb{H}^n\setminus \mathbb{U}}+\int_{\mathbb{U}}\bigg)\frac{\Phi_{\zeta(1-\frac{\vartheta}{Q}),j_0}(u)}{r(\xi)^\vartheta}\,\mathrm{d}\xi:=J_1+J_2.   
\end{align*}
Recall that $\mathbb{H}^n\setminus \mathbb{U}\subset \{\xi\in \mathbb{H}^n:~u(\xi)\leq 1\}$. Now, again by taking advantage of  the embedding $S^{1,p}(\mathbb{H}^n)\hookrightarrow L^{p^\ast}(\mathbb{H}^n)$ and $\|D_H u\|_p= 1$, we obtain
\begin{equation}\label{eq4.2}
\begin{aligned}
J_1 & \leq \int_{\{\xi\in \mathbb{H}^n\colon~ u(\xi)\leq 1\}}\frac{1}{r(\xi)^\vartheta}\sum_{j=j_0}^\infty \frac{\big(\zeta\big(1-\frac{\vartheta}{Q}\big)\big)^j}{j!}|u|^{\frac{Qj}{Q-1}}\,\mathrm{d}\xi \leq \int_{\{x\in \mathbb{H}^n\colon~ u(\xi)\leq 1\}}\frac{1}{r(\xi)^\vartheta}\sum_{j=j_0}^\infty \frac{\big(\zeta\big(1-\frac{\vartheta}{Q}\big)\big)^j}{j!}|u|^{p^\ast}\,\mathrm{d}\xi \\ 
 & \leq \operatorname{\exp}\bigg(\zeta\bigg(1-\frac{\vartheta}{Q}\bigg)\bigg) {\Bigg(}\int_{\{\xi\in \mathbb{H}^n\colon~ u(\xi)\leq 1,~r(\xi)<1\}}\frac{|u|^{p^\ast}}{r(\xi)^\vartheta}\,\mathrm{d}\xi+  \int_{\{\xi\in \mathbb{H}^n\colon~u(\xi)\leq1,~r(\xi)\geq1\}}\frac{|u|^{p^\ast}}{r(\xi)^\vartheta}\,\mathrm{d}\xi{\Bigg)}\\ 
& \leq \operatorname{\exp}\bigg(\zeta\bigg(1-\frac{\vartheta}{Q}\bigg)\bigg){\Bigg(} \int_{\{ \xi\in \mathbb{H}^n\colon~r(\xi)<1\}}\frac{1}{r(\xi)^\vartheta}\,\mathrm{d}\xi+ \int_{\{ \xi\in \mathbb{H}^n\colon~r(\xi)\geq1\}}|u|^{p^\ast}\,\mathrm{d}\xi{\Bigg)} \leq  C(\zeta,\vartheta,p,Q),
\end{aligned}
\end{equation}
where $C(\alpha,\vartheta,p,Q)>0$ is a constant depending on  $\zeta$, $\vartheta$, $p$ and $Q$. Let us define $\psi\colon \mathbb{U}\to\mathbb{R}$ as follows: $$\psi(\xi):=u(\xi)-\bigg(1-\Big(\frac{\zeta}{\alpha_Q}\Big)^{Q-1}\bigg)^{\frac{1}{Q}},~\forall~\xi\in \mathbb{U}.$$ 
By standard arguments, one sees that $\psi\in HW^{1,Q}_0(\mathbb{U})$ and $\|D_H \psi\|_{L^Q(\mathbb{U})}\leq 1$. In view of \eqref{eq3.6} with $\eta=\frac{\alpha_Q}{\zeta}-1$, we have
\begin{equation}\label{eq4.3}
\begin{aligned}
    |u(\xi)|^{\frac{Q}{Q-1}}&\leq \Bigg(|\psi(\xi)|+\bigg(1-\Big(\frac{\zeta}{\alpha_Q}\Big)^{Q-1}\bigg)^{\frac{1}{Q}}\Bigg)^{\frac{Q}{Q-1}}\leq (1+\eta)|\psi(\xi)|^{\frac{Q}{Q-1}}+C_\eta \bigg(1-\Big(\frac{\zeta}{\alpha_Q}\Big)^{Q-1}\bigg)^{\frac{1}{Q-1}}\\ 
    &=  \frac{\alpha_Q}{\zeta}|\psi(\xi)|^{\frac{Q}{Q-1}}+\bigg(1-\Big(\frac{\zeta}{\alpha_Q}\Big)^{Q-1}\bigg)^{-\frac{1}{Q-1}}\bigg(1-\Big(\frac{\zeta}{\alpha_Q}\Big)^{Q-1}\bigg)^{\frac{1}{Q-1}}\\
    &=\frac{\alpha_Q}{\zeta}|\psi(\xi)|^{\frac{Q}{Q-1}}+1,~\forall~\xi\in \mathbb{U}.
\end{aligned}
\end{equation}
Due to the Moser-Trudinger inequality \cite[Lemma 4.1]{Lam-Lu-Tang-2012}, \eqref{eq4.1} and \eqref{eq4.3}, we obtain
\begin{equation}\label{eq4.4}
\begin{aligned}
 J_2 &  \leq \int_\mathbb{U}\frac{\operatorname{\exp}\Big(\zeta\big(1-\frac{\vartheta}{Q}\big) |u|^{\frac{Q}{Q-1}}\Big)}{r(\xi)^\vartheta}\,\mathrm{d}\xi\leq \operatorname{\exp}\bigg(\zeta\bigg(1-\frac{\vartheta}{Q}\bigg)\bigg)\int_\mathbb{U}\frac{\operatorname{\exp}(\alpha_{\vartheta,Q} |\psi|^{\frac{Q}{Q-1}})}{r(\xi)^\vartheta}\,\mathrm{d}\xi \\ 
 &\leq \operatorname{\exp}\bigg(\zeta\bigg(1-\frac{\vartheta}{Q}\bigg)\bigg)C(\vartheta,Q)|\mathbb{U}|^{1-\frac{\vartheta}{Q}}\leq C(\vartheta,p,Q)\bigg(1-\Big(\frac{\zeta}{\alpha_Q}\Big)^{Q-1}\bigg)^{-\frac{p^\ast}{Q}\big(1-\frac{\vartheta}{Q}\big)},
\end{aligned}
\end{equation}
where $C(\vartheta,p,Q)>0$ is a suitable constant depending on $\vartheta$, $p$ and $Q$. {By} combining \eqref{eq4.2} and \eqref{eq4.4}, one sees that
$$\text{SCMTI}(\zeta,\vartheta)\leq \bigg(1-\Big(\frac{\zeta}{\alpha_Q}\Big)^{Q-1}\bigg)^{-\frac{p^\ast}{Q}\big(1-\frac{\vartheta}{Q}\big)}C(\vartheta,p,Q).   
$$

Next, we aim to prove the sharpness of $\alpha_Q$. In this purpose, take the same sequence $\{\psi_k\}_k\subset W$ that appeared in \eqref{eq3.14} with $s=1$. It follows that
$$ \|D_H\psi_k\|_Q=1\quad\text{and}\quad \|D_H\psi_k\|_p\to 0\quad\text{as}\quad k\to\infty.$$
Moreover, one has
\begin{align*}
  \int_{\mathbb{H}^n}\frac{\Phi_{\alpha_{\vartheta,Q},j_0}(\psi_k)}{r(\xi)^\vartheta}\,\mathrm{d}\xi&\geq \int_{\big\{\xi\in \mathbb{H}^n\colon~r(\xi)<\operatorname{\exp}(-kQ^{-1})\big\}}\frac{\Phi_{\alpha_{\vartheta,Q},j_0}(\psi_k)}{r(\xi)^\vartheta}\,\mathrm{d}\xi\\
  &=\int_\Sigma\int_{0}^{\operatorname{\exp}(-kQ^{-1})}\frac{\Phi_{\alpha_{\vartheta,Q},j_0}\Big(\big( k \alpha_Q^{-1}\big)^{\frac{Q-1}{Q}}\Big)}{\eta^{\vartheta-Q+1}}\,\mathrm{d}\eta\,\mathrm{d}\mu(\xi^\ast)\\
  &\gtrsim \bigg(\int_\Sigma \,\mathrm{d}\mu(\xi^\ast)\bigg)\frac{\operatorname{\exp}\big(-k\big(1-\frac{\vartheta}{Q}\big)\big)\operatorname{\exp}\big(\alpha_{\vartheta,Q}k\alpha_Q^{-1}\big)}{Q-\vartheta}=\bigg(\int_\Sigma \,\mathrm{d}\mu(\xi^\ast)\bigg)\frac{1}{Q-\vartheta}.  
\end{align*}
This yields that
$$ \|D_H \psi_k\|_p^{-p^\ast(1-\frac{\vartheta}{Q})}\int_{\mathbb{H}^n}\frac{\Phi_{\alpha_{\vartheta,Q},j_0}(\psi_k)}{r(\xi)^\vartheta}\,\mathrm{d}\xi\to+\infty\quad\text{as}\quad k\to\infty. $$
This shows that $\text{SCMTI}(\alpha_Q,\vartheta)=+\infty$ and thus\textcolor{blue}{,} the proof of the sharpness of $\alpha_Q$ is well verified. 

Now, we introduce the same Moser's function sequence $\{v_k\}_k\subset W$, which was used in \eqref{eq3.99}. It is easy to see that
$$ \|D_H v_k\|_Q=1\quad\text{and}\quad \|D_H v_k\|_p\leq (\ln k)^{-\frac{1}{Q}}\Big(\Pi_1+\Pi_2 k^{-(Q-p)}\Big)^{\frac{1}{p}},$$
where $\Pi_1$ and $\Pi_2$ are two positive constants. Hence, we have the following estimates
\begin{align*}
   \text{SCMTI}(\zeta,\vartheta)&\geq \cfrac{1}{\|D_H v_k\|_p^{p^\ast(1-\frac{\vartheta}{Q})}}\int_{\{\xi\in\mathbb{H}^n\colon~r(\xi)<sk^{-1}\}}\frac{\Phi_{\zeta(1-\frac{\vartheta}{Q}),j_0}(v_k)}{r(\xi)^\vartheta}\,\mathrm{d}\xi\\
   &\geq \cfrac{(\ln k)^{\frac{p^\ast}{Q}(1-\frac{\vartheta}{Q})}}{\Big(\Pi_1+\Pi_2 k^{-(Q-p)}\Big)^{\frac{p^\ast}{p}(1-\frac{\vartheta}{Q})}}\int_\Sigma\int_{0}^{sk^{-1}}\Phi_{\zeta(1-\frac{\vartheta}{Q}),j_0}\Big(c^{-\frac{1}{Q}}_Q(\ln k)^{\frac{Q-1}{Q}}\Big)\eta^{Q-\vartheta-1}\,\mathrm{d}\eta\,\mathrm{d}\mu(\xi^\ast)\\
   &\gtrsim \cfrac{(\ln k)^{\frac{p^\ast}{Q}(1-\frac{\vartheta}{Q})} \bigg(\displaystyle\int_\Sigma\,\mathrm{d}\mu(\xi^\ast)\bigg)(sk^{-1})^{Q-\vartheta}}{\Big(\Pi_1+\Pi_2 k^{-(Q-p)}\Big)^{\frac{p^\ast}{p}(1-\frac{\vartheta}{Q})}(Q-\vartheta)}~\operatorname{\exp}\Bigg[\zeta\Big(1-\frac{\vartheta}{Q}\Big)\bigg|c^{-\frac{1}{Q}}_Q(\ln k)^{\frac{Q-1}{Q}}\bigg|^{\frac{Q}{Q-1}}\Bigg]\\
   &\gtrsim \cfrac{(\ln k)^{\frac{p^\ast}{Q}(1-\frac{\vartheta}{Q})} }{\Big(\Pi_1+\Pi_2 k^{-(Q-p)}\Big)^{\frac{p^\ast}{p}(1-\frac{\vartheta}{Q})}}~\operatorname{\exp}\bigg((Q-\vartheta)\ln k\Big(\frac{\zeta}{\alpha_Q}-1\Big)\bigg).
\end{align*}
If we choose $k$ large enough, then we have
$$ \cfrac{1}{\Big(\Pi_1+\Pi_2 k^{-(Q-p)}\Big)^{\frac{p^\ast}{p}(1-\frac{\vartheta}{Q})}}\approx \cfrac{1}{\Pi_1^{\frac{p^\ast}{p}(1-\frac{\vartheta}{Q})}}. $$
Thus, we deduce from the above inequality that
\begin{align*}
   \text{SCMTI}(\zeta,\vartheta) \gtrsim (\ln k)^{\frac{p^\ast}{Q}(1-\frac{\vartheta}{Q})} ~\operatorname{\exp}\bigg((Q-\vartheta)\ln k\Big(\frac{\zeta}{\alpha_Q}-1\Big)\bigg).
\end{align*}
In addition, since $\zeta\underset{\approx}{<}\alpha_Q$, one can assume $k$ large enough such that
$$ \ln k\approx\frac{p}{1-\frac{\zeta}{\alpha_Q}}\quad\text{and}\quad \operatorname{\exp}\bigg((Q-\vartheta)\ln k\Big(\frac{\zeta}{\alpha_Q}-1\Big)\bigg)\approx\operatorname{\exp}\big(-p(Q-\vartheta)\big).$$
It follows that when $\zeta\underset{\approx}{<}\alpha_Q$, there holds
$$ \text{SCMTI}(\zeta,\vartheta) \gtrsim\frac{\operatorname{\exp}\big(-p(Q-\vartheta)\big)}{\bigg(1-\frac{\zeta}{\alpha_Q}\bigg)^{\frac{p^\ast}{Q}(1-\frac{\vartheta}{Q})}}\approx\frac{(Q-1)^{\frac{p^\ast}{Q}(1-\frac{\vartheta}{Q})}~\operatorname{\exp}\big(-p(Q-\vartheta)\big)}{\bigg(1-\Big(\frac{\zeta}{\alpha_Q}\Big)^{Q-1}\bigg)^{\frac{p^\ast}{Q}(1-\frac{\vartheta}{Q})}}, $$
we thank to
$$ \frac{1-\Big(\frac{\zeta}{\alpha_Q}\Big)^{Q-1}}{1-\frac{\zeta}{\alpha_Q}}\approx Q-1\quad\text{for}\quad \zeta\underset{\approx}{<}\alpha_Q.$$
Finally, we have
$$\text{SCMTI}(\zeta,\vartheta)\geq \bigg(1-\Big(\frac{\zeta}{\alpha_Q}\Big)^{Q-1}\bigg)^{-\frac{p^\ast}{Q}\big(1-\frac{\vartheta}{Q}\big)}c(\vartheta,p,Q), $$
where $c(\vartheta,p,Q)>0$ is a constant depending on $\vartheta$, $p$ and $Q$. Thus, the theorem is well established.
\end{proof}
\section{Equivalence of sharp critical and subcritical singular Moser-Trudinger type inequalities: Proof of
 Theorem \ref{thm5.22}}\label{sec6}
In this section, our goal is to establish the relationship between the suprema for the sharp critical and subcritical singular Moser-Trudinger inequalities. Initially, we prove the following influential lemma.
 \begin{lemma}\label{lem4.2}
The sharp subcritical singular Moser-Trudinger type inequality in $W$ follows from the sharp critical singular Moser-Trudinger type inequality in $W$. For instance, if $\text{CMTI}_{a,b}(\vartheta)<+\infty$, then one has $\text{SCMTI}(\zeta,\vartheta)<+\infty$. Furthermore, it holds:
$$\text{SCMTI}(\zeta,\vartheta)\leq\Bigg(\cfrac{\big(\frac{\zeta}{\alpha_Q}\big)^{(\frac{Q-1}{Q})b}}{1-\big(\frac{\zeta}{\alpha_Q}\big)^{(\frac{Q-1}{Q})a}}\Bigg)^{\frac{p^\ast}{b}\big(1-\frac{\vartheta}{Q}\big)}\text{CMTI}_{a,b}(\vartheta). $$
Notably, we have
$$\text{SCMTI}(\zeta,\vartheta)\leq\Bigg(\cfrac{\big(\frac{\zeta}{\alpha_Q}\big)^{Q-1}}{1-\big(\frac{\zeta}{\alpha_Q}\big)^{Q-1}}\Bigg)^{\frac{p^\ast}{Q}\big(1-\frac{\vartheta}{Q}\big)}\text{CMTI}_{Q,Q}(\vartheta). $$
\end{lemma}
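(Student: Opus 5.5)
We obtain the bound by a scaling argument: each competitor for $\text{SCMTI}(\zeta,\vartheta)$ is turned into a competitor for $\text{CMTI}_{a,b}(\vartheta)$. By Lemma \ref{lem4.1} we may restrict to $u\in W$ with $\|D_H u\|_Q\leq 1$ and $\|D_H u\|_p=1$. For such $u$ we set
$$v(\xi):=\Big(\frac{\zeta}{\alpha_Q}\Big)^{\frac{Q-1}{Q}}u(\delta_\lambda(\xi)),$$
with a dilation parameter $\lambda>0$ still to be chosen. Write $t:=\big(\frac{\zeta}{\alpha_Q}\big)^{\frac{Q-1}{Q}}$. The amplitude factor is chosen so that
$$\alpha_{\vartheta,Q}|v|^{\frac{Q}{Q-1}}=\Big(1-\frac{\vartheta}{Q}\Big)\zeta\,|u\circ\delta_\lambda|^{\frac{Q}{Q-1}},$$
and therefore $\Phi_{\alpha_{\vartheta,Q},j_0}(v)=\Phi_{\zeta(1-\frac{\vartheta}{Q}),j_0}(u\circ\delta_\lambda)$ pointwise.

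\textbf{Scaling identities.} Exactly as in the proof of Lemma \ref{lem4.1}, we have $\|D_H v\|_Q=t\|D_H u\|_Q\leq t$. We also have $\|D_H v\|_p=t\lambda^{1-\frac{Q}{p}}=t\lambda^{-\frac{Q}{p^\ast}}$. Using $r(\delta_\lambda\xi)=\lambda r(\xi)$ and the Jacobian $\lambda^Q$,
$$\int_{\mathbb{H}^n}\frac{\Phi_{\alpha_{\vartheta,Q},j_0}(v)}{r(\xi)^\vartheta}\,\mathrm{d}\xi=\lambda^{\vartheta-Q}\int_{\mathbb{H}^n}\frac{\Phi_{\zeta(1-\frac{\vartheta}{Q}),j_0}(u)}{r(\xi)^\vartheta}\,\mathrm{d}\xi .$$

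\textbf{Choice of $\lambda$.} We need $v$ to be admissible for $\text{CMTI}_{a,b}(\vartheta)$. Since $\|D_H v\|_Q^a\leq t^a$, it suffices to impose
$$t^b\lambda^{-\frac{bQ}{p^\ast}}=1-t^a,$$
which gives $\|D_H v\|_Q^a+\|D_H v\|_p^b\leq 1$. This forces
$$\lambda^{\frac{Q}{p^\ast}}=\Big(\frac{t^b}{1-t^a}\Big)^{1/b},\qquad\text{so}\qquad \lambda^{Q-\vartheta}=\Big(\frac{t^b}{1-t^a}\Big)^{\frac{p^\ast}{b}(1-\frac{\vartheta}{Q})}.$$
The choice is legitimate because $t<1$ whenever $\zeta<\alpha_Q$.

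\textbf{Conclusion.} Rearranging the integral identity gives
$$\int_{\mathbb{H}^n}\frac{\Phi_{\zeta(1-\frac{\vartheta}{Q}),j_0}(u)}{r(\xi)^\vartheta}\,\mathrm{d}\xi=\lambda^{Q-\vartheta}\int_{\mathbb{H}^n}\frac{\Phi_{\alpha_{\vartheta,Q},j_0}(v)}{r(\xi)^\vartheta}\,\mathrm{d}\xi\leq\lambda^{Q-\vartheta}\,\text{CMTI}_{a,b}(\vartheta).$$
Taking the supremum over $u$ and invoking Lemma \ref{lem4.1} yields the stated inequality, since $t^b=\big(\frac{\zeta}{\alpha_Q}\big)^{\frac{Q-1}{Q}b}$ and $t^a=\big(\frac{\zeta}{\alpha_Q}\big)^{\frac{Q-1}{Q}a}$. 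Finiteness of $\text{SCMTI}(\zeta,\vartheta)$ then follows immediately from $\text{CMTI}_{a,b}(\vartheta)<+\infty$. The special case $a=b=Q$ is a direct substitution.

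The only delicate step is the bookkeeping of exponents. One must check that dilation leaves the $Q$-norm invariant and changes only the $p$-norm. One must also check that the signs of the exponents $\vartheta-Q$ and $-Q/p^\ast$ combine to give precisely the power $\frac{p^\ast}{b}(1-\frac{\vartheta}{Q})$.
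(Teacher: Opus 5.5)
Your proposal is correct and follows the paper's proof essentially verbatim. It uses the same reduction via Lemma \ref{lem4.1} and the same amplitude factor $(\zeta/\alpha_Q)^{\frac{Q-1}{Q}}$. Your dilation parameter is the paper's $\theta$, fixed by $\|D_H v\|_p^b=1-(\zeta/\alpha_Q)^{\frac{Q-1}{Q}a}$, and the same change of variables yields the factor $\lambda^{Q-\vartheta}$ in front of $\text{CMTI}_{a,b}(\vartheta)$.
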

\begin{proof}
Suppose $u\in W$, $\|D_H u\|_Q\leq 1$ and $\|D_H u\|_p= 1$. By using the similar notations as in Lemma \ref{lem4.1}, we define a function  $\psi\colon\mathbb{H}^n\to\mathbb{R}$ by
   $$\psi(\xi):=\bigg(\frac{\zeta}{\alpha_Q}\bigg)^{\frac{Q-1}{Q}}u(\delta_\theta(\xi))\quad\text{and}\quad \theta:=\Bigg(\cfrac{\big(\frac{\zeta}{\alpha_Q}\big)^{(\frac{Q-1}{Q})b}}{1-\big(\frac{\zeta}{\alpha_Q}\big)^{(\frac{Q-1}{Q})a}}\Bigg)^{\frac{p^\ast}{Qb}}. $$
It follows that
$$ D_H\psi(\xi)=\bigg(\frac{\zeta}{\alpha_Q}\bigg)^{\frac{Q-1}{Q}}\theta D_Hu(\delta_\theta(\xi)),~\|D_H \psi\|_Q^a=\bigg(\frac{\zeta}{\alpha_Q}\bigg)^{\big(\frac{Q-1}{Q}\big)a}\|D_H u\|^a_Q\leq \bigg(\frac{\zeta}{\alpha_Q}\bigg)^{\big(\frac{Q-1}{Q}\big)a}$$
and $$\|D_H \psi\|_p^b=1- \bigg(\frac{\zeta}{\alpha_Q}\bigg)^{\big(\frac{Q-1}{Q}\big)a}. $$
From the above estimates, we obtain $\|D_H \psi\|_Q^a+\|D_H \psi\|_p^b\leq 1$. Moreover, one can easily deduce from the definition of $\text{CMTI}_{a,b}(\vartheta)$ that
\begin{align*}
  \int_{\mathbb{H}^n}\frac{\Phi_{\zeta(1-\frac{\vartheta}{Q}),j_0}(u)}{r(\xi)^\vartheta}\,\mathrm{d}\xi & =\int_{\mathbb{H}^n}\frac{\Phi_{\zeta(1-\frac{\vartheta}{Q}),j_0}\big(u(\delta_\theta(\xi))\big)}{r(\delta_\theta(\xi))^\vartheta}\,\mathrm{d}(\delta_\theta(\xi))=\theta ^{Q-\vartheta}\int_{\mathbb{H}^n}\frac{\Phi_{\alpha_{\vartheta,Q},j_0}(\psi)}{r(\xi)^\vartheta}\,\mathrm{d}\xi\\
 &\leq\Bigg(\cfrac{\big(\frac{\zeta}{\alpha_Q}\big)^{(\frac{Q-1}{Q})b}}{1-\big(\frac{\zeta}{\alpha_Q}\big)^{(\frac{Q-1}{Q})a}}\Bigg)^{\frac{p^\ast}{b}\big(1-\frac{\vartheta}{Q}\big)}\text{CMTI}_{a,b}(\vartheta).  
\end{align*}
This completes the proof. 
\end{proof}
Now, we can prove Theorem \ref{thm5.22} as follows.
\begin{proof}[\bf{Proof of Theorem \ref{thm5.22}}.]
Suppose $b\in(0,Q]$ and $u\in W\setminus\{0\}$ such that $\|D_H u\|^a_{Q}+\|D_H u\|^b_p\leq 1$. Set
$$\|D_H u\|_Q:=\sigma\in(0,1)\quad\text{and}\quad\|D_H u\|_p\leq(1-\sigma^a)^{\frac{1}{b}} .$$
Now, we fix $\sigma\in \big(2^{-\frac{Q}{p^\ast}},1\big)$ and define a function $\psi\colon\mathbb{H}^n\to\mathbb{R}$ by
$$\psi(\xi):=\sigma^{-1}u(\delta_\theta(\xi))\quad\text{with}\quad\theta:=\Big(\sigma^{-1}(1-\sigma^a)^{\frac{1}{b}}\Big)^{\frac{p^\ast}{Q}}>0. $$
Direct calculations yield that
$$ D_H \psi(\xi)=\sigma^{-1}\theta D_H u(\delta_\theta(\xi)),~\|D_H \psi\|_Q= 1\quad\text{and}\quad \|D_H \psi\|_p\leq\sigma^{-1}\theta^{-\frac{Q}{p^\ast}}(1-\sigma^a)^{\frac{1}{b}}=1.  $$
In virtue of Theorem \ref{thm5.1} and $b\leq Q$, we have the following estimates
\begin{align*}
  \int_{\mathbb{H}^n}\frac{\Phi_{\alpha_{\vartheta,Q},j_0}(u)}{r(\xi)^\vartheta}\,\mathrm{d}\xi & =\int_{\mathbb{H}^n}\frac{\Phi_{\alpha_{\vartheta,Q},j_0}\big(u(\delta_\theta(\xi))\big)}{r(\delta_\theta(\xi))^\vartheta}\,\mathrm{d}(\delta_\theta(\xi))\leq\theta ^{Q-\vartheta}\int_{\mathbb{H}^n}\frac{\Phi_{\alpha_{\vartheta,Q},j_0}(\sigma\psi)}{r(\xi)^\vartheta}\,\mathrm{d}\xi\\
 &\leq \theta ^{Q-\vartheta} \text{SCMTI}(\sigma^{Q^\prime}\alpha_Q,\vartheta)\leq \Big(\sigma^{-1}(1-\sigma^a)^{\frac{1}{b}}\Big)^{p^\ast\big(1-\frac{\vartheta}{Q}\big)}\frac{C(\vartheta,p,Q)}{(1-\sigma^Q)^{\frac{p^\ast}{Q}\big(1-\frac{\vartheta}{Q}\big)}}\\
 &\leq \Bigg(\frac{(1-\sigma^a)^{\frac{1}{b}}}{(1-\sigma^Q)^{\frac{1}{Q}}}\Bigg)^{p^\ast\big(1-\frac{\vartheta}{Q}\big)}C(\vartheta,p,Q)\leq C(\vartheta,a,b,p,Q),  
\end{align*}
where $C(\vartheta,a,b,p,Q)$ is a suitable constant depending on $\vartheta,a,b,p$ and $Q$. {Next, we suppose}  that $\sigma\in \big(0,2^{-\frac{Q}{p^\ast}}\big]$ and $\psi\colon\mathbb{H}^n\to\mathbb{R}$ is given by
$$\psi(\xi):=2^{\frac{Q}{p^\ast}}u(\delta_2(\xi)),~\forall~\xi\in \mathbb{H}^n.$$
Then\textcolor{blue}{,} one can easily get
$$ D_H \psi(\xi)=2^{\frac{Q}{p^\ast}+1}D_H u(\delta_2(\xi)),~~\|D_H\psi\|_Q=2^{\frac{Q}{p^\ast}}\|D_H u\|_Q\leq 1\quad\text{and}\quad \|D_H\psi\|_p=\|D_H u\|_p\leq 1.$$
It follows from Theorem \ref{thm5.1} that
\begin{align*}
  \int_{\mathbb{H}^n}\frac{\Phi_{\alpha_{\vartheta,Q},j_0}(u)}{r(\xi)^\vartheta}\,\mathrm{d}\xi & =\int_{\mathbb{H}^n}\frac{\Phi_{\alpha_{\vartheta,Q},j_0}\big(u(\delta_2(\xi))\big)}{r(\delta_2(\xi))^\vartheta}\,\mathrm{d}(\delta_2(\xi))\leq 2 ^{Q-\vartheta}\int_{\mathbb{H}^n}\frac{\Phi_{\alpha_{\vartheta,Q},j_0}\big(2^{-\frac{Q}{p^\ast}}\psi\big)}{r(\xi)^\vartheta}\,\mathrm{d}\xi\\
 &\leq 2 ^{Q-\vartheta} \text{SCMTI}\Big(2^{-\frac{Q^2}{(Q-1)p^\ast}}\alpha_Q,\vartheta\Big)\\
 &\leq 2 ^{Q-\vartheta}\Big(1-2^{-\frac{Q^2}{p^\ast}}\Big)^{-\frac{p^\ast}{Q}(1-\frac{\vartheta}{Q})}C(\vartheta,p,Q)\leq C(\vartheta,p,Q),  
\end{align*}
where $C(\vartheta,p,Q)$ is a constant depending on $\vartheta,p$ and $Q$. This shows that $\text{CMTI}_{a,b}(\vartheta)<+\infty$ {for all} $b\in(0,Q]$. 

To check the sharpness of $\alpha_Q$, we reconsider the sequence $\{v_k\}_k\subset W$, which is defined in \eqref{eq3.99}. It is obvious that
$$\|D_H v_k\|_Q=1\quad\text{and}\quad \|D_H v_k\|_p^p=\frac{c^{-\frac{p}{Q}}_Q(\ln k)^{-\frac{p}{Q}}c_p s^{Q-p}}{Q-p}\bigg(1-\frac{1}{k^{Q-p}}\bigg).$$
Let $\{\tau_k\}_k\subset (0,1)$ be a sequence such that $$\tau_k^a+\tau_k^b\|D_H v_k\|^b_p=1\quad\text{and}\quad \tau_k\to 1\quad\text{as}\quad k\to\infty.$$  Define a new sequence $\{u_k\}_k\subset W$ by
$$ u_k(\xi):=\tau_k v_k(\xi),~~\forall~\xi\in\mathbb{H}^n. $$
Thus, one can easily deduce that $\|D_H u_k\|^a_Q+\|D_H u_k\|^b_p=1$. In addition, if we take $\zeta>\alpha_Q$, then 
\begin{align*}
   \int_{\mathbb{H}^n}\frac{\Phi_{\zeta(1-\frac{\vartheta}{Q}),j_0}(u_k)}{r(\xi)^\vartheta}\,\mathrm{d}\xi&\geq \int_{\{\xi\in\mathbb{H}^n\colon~r(\xi)<sk^{-1}\}}\frac{\Phi_{\zeta(1-\frac{\vartheta}{Q}),j_0}(u_k)}{r(\xi)^\vartheta}\,\mathrm{d}\xi\\
   &= \int_\Sigma\int_{0}^{sk^{-1}}\Phi_{\zeta(1-\frac{\vartheta}{Q}),j_0}\Big(\tau_k c^{-\frac{1}{Q}}_Q(\ln k)^{\frac{Q-1}{Q}}\Big)\eta^{Q-\vartheta-1}\,\mathrm{d}\eta\,\mathrm{d}\mu(\xi^\ast)\\
   &\gtrsim \bigg(\displaystyle\int_\Sigma\,\mathrm{d}\mu(\xi^\ast)\bigg)\cfrac{(sk^{-1})^{Q-\vartheta}}{Q-\vartheta}~\operatorname{\exp}\Bigg[\zeta\Big(1-\frac{\vartheta}{Q}\Big)\bigg|\tau_k c^{-\frac{1}{Q}}_Q(\ln k)^{\frac{Q-1}{Q}}\bigg|^{\frac{Q}{Q-1}}\Bigg]\\
   &=\bigg(\displaystyle\int_\Sigma\,\mathrm{d}\mu(\xi^\ast)\bigg) \cfrac{s^{Q-\vartheta}}{Q-\vartheta}~\operatorname{\exp}\bigg((Q-\vartheta)\ln k\bigg(\frac{\zeta \tau_k^{\frac{Q}{Q-1}}}{\alpha_Q}-1\bigg)\bigg)\to+\infty\quad\text{as}\quad k\to\infty.
\end{align*}
It follows that $\text{CMTI}_{a,b}(\vartheta)=+\infty$ {for all} $\zeta>\alpha_Q$ and thus\textcolor{blue}{,} the proof of the sharpness of $\alpha_Q$ is completed. 

Due to Lemma \ref{lem4.2}, we have
$$ \sup_{\zeta\in(0,\alpha_Q)}\Bigg(\cfrac{1-\big(\frac{\zeta}{\alpha_Q}\big)^{(\frac{Q-1}{Q})a}}{\big(\frac{\zeta}{\alpha_Q}\big)^{(\frac{Q-1}{Q})b}}\Bigg)^{\frac{p^\ast}{b}\big(1-\frac{\vartheta}{Q}\big)}\text{SCMTI}(\zeta,\vartheta)\leq \text{CMTI}_{a,b}(\vartheta).$$
Let $\{\psi_k\}_k\subset W\setminus\{0\}$ be a maximizing sequence for $\text{CMTI}_{a,b}(\vartheta)$. It follows that
$$ \|D_H \psi_k\|^a_Q+\|D_H \psi_k\|^b_p\leq1\quad\text{and}\quad \lim_{k\to\infty} \int_{\mathbb{H}^n}\frac{\Phi_{\alpha_{\vartheta,Q},j_0}(\psi_k)}{r(\xi)^\vartheta}\,\mathrm{d}\xi=\text{CMTI}_{a,b}(\vartheta).$$
Define another sequence $\{w_k\}_k\subset W$ by
$$ w_k(\xi):=\frac{\psi_k(\delta_{\theta_k} (\xi))}{\|D_H \psi_k\|_Q}\quad\text{with}\quad\theta_k:=\Bigg(\frac{1-\|D_H \psi_k\|^a_{Q}}{\|D_H \psi_k\|^b_Q}\Bigg)^{\frac{p^\ast}{Qb}} .$$
By direct calculations, we have 
$$ D_H w_k=\frac{\theta_k D_H\psi_k(\delta_{\theta_k} (\xi))}{\|D_H \psi_k\|_Q},\quad \|D_H w_k\|_Q=1$$
and $$ \|D_H w_k\|_p=\frac{\|D_H \psi_k\|_p}{\|D_H \psi_k\|_Q}\theta_k^{-\frac{Q}{p^\ast}}\leq \frac{(1-\|D_H \psi_k\|^a_{Q})^{\frac{1}{b}}}{\|D_H \psi_k\|_Q}\theta_k^{-\frac{Q}{p^\ast}}=1. $$
From the definition of $\text{SCMTI}(\zeta,\vartheta)$, one sees that 
\begin{align*}
  \int_{\mathbb{H}^n}\frac{\Phi_{\alpha_{\vartheta,Q},j_0}(\psi_k)}{r(\xi)^\vartheta}\,\mathrm{d}\xi&=\int_{\mathbb{H}^n}\frac{\Phi_{\alpha_{\vartheta,Q},j_0}(\psi_k(\delta_{\theta_k} (\xi)))}{r(\delta_{\theta_k} (\xi))^\vartheta}\,\mathrm{d}(\delta_{\theta_k} (\xi))\leq \theta_k^{Q-\vartheta}\int_{\mathbb{H}^n}\frac{\Phi_{\alpha_{\vartheta,Q},j_0}(\|D_H \psi_k\|_Qw_k)}{r(\xi)^\vartheta}\,\mathrm{d}\xi \\
   &\leq \theta_k^{Q-\vartheta}\text{SCMTI}\Big(\|D_H \psi_k\|_{Q}^{\frac{Q}{Q-1}}\alpha_Q,\vartheta\Big)\\
   &\leq \sup_{\zeta\in(0,\alpha_Q)}\Bigg(\cfrac{1-\big(\frac{\zeta}{\alpha_Q}\big)^{(\frac{Q-1}{Q})a}}{\big(\frac{\zeta}{\alpha_Q}\big)^{(\frac{Q-1}{Q})b}}\Bigg)^{\frac{p^\ast}{b}\big(1-\frac{\vartheta}{Q}\big)}\text{SCMTI}(\zeta,\vartheta).
\end{align*}
Letting $k\to\infty$ in the above inequality, we obtain
\begin{align*}
  \text{CMTI}_{a,b}(\vartheta)\leq\sup_{\zeta\in(0,\alpha_Q)}\Bigg(\cfrac{1-\big(\frac{\zeta}{\alpha_Q}\big)^{(\frac{Q-1}{Q})a}}{\big(\frac{\zeta}{\alpha_Q}\big)^{(\frac{Q-1}{Q})b}}\Bigg)^{\frac{p^\ast}{b}\big(1-\frac{\vartheta}{Q}\big)}\text{SCMTI}(\zeta,\vartheta).  
\end{align*}
{By} combining the estimations above yields \eqref{eq1.15}. To finish the proof, it suffices to show that $\text{CMTI}_{a,b}(\vartheta)<+\infty$ implies $b\leq Q$. Indeed, if not, suppose there exists some $b$ satisfying $b>Q$ such that $\text{CMTI}_{a,b}(\vartheta)<+\infty$. Consequently, one can observe from \eqref{eq1.15} that
\begin{align*}
    \limsup_{\zeta\to\alpha_Q^-}\Bigg(\cfrac{1-\big(\frac{\zeta}{\alpha_Q}\big)^{(\frac{Q-1}{Q})a}}{\big(\frac{\zeta}{\alpha_Q}\big)^{(\frac{Q-1}{Q})b}}\Bigg)^{\frac{p^\ast}{b}\big(1-\frac{\vartheta}{Q}\big)}\text{SCMTI}(\zeta,\vartheta)<+\infty.
\end{align*}
In addition, we also obtain from Theorem \ref{thm5.1} that
\begin{align*}
    \limsup_{\zeta\to\alpha_Q^-}\Bigg(\cfrac{1-\big(\frac{\zeta}{\alpha_Q}\big)^{(\frac{Q-1}{Q})a}}{\big(\frac{\zeta}{\alpha_Q}\big)^{(\frac{Q-1}{Q})b}}\Bigg)^{\frac{p^\ast}{Q}\big(1-\frac{\vartheta}{Q}\big)}\text{SCMTI}(\zeta,\vartheta)>0.
\end{align*}
It follows immediately from the above two inequalities that
$$ \liminf_{\zeta\to\alpha_Q^-}\Bigg(\cfrac{1-\big(\frac{\zeta}{\alpha_Q}\big)^{(\frac{Q-1}{Q})a}}{\big(\frac{\zeta}{\alpha_Q}\big)^{(\frac{Q-1}{Q})b}}\Bigg)^{\frac{p^\ast}{b}\big(1-\frac{\vartheta}{Q}\big)}\Bigg(\cfrac{\big(\frac{\zeta}{\alpha_Q}\big)^{(\frac{Q-1}{Q})b}}{1-\big(\frac{\zeta}{\alpha_Q}\big)^{(\frac{Q-1}{Q})a}}\Bigg)^{\frac{p^\ast}{Q}\big(1-\frac{\vartheta}{Q}\big)}<+\infty. $$
By using $b>Q$ in the above relation, we arrive at a contradiction. This ends the proof.
\end{proof}
\section{Compact embedding result: Proof of Theorem \ref{thm1.4}}\label{sec7}
This section is concerned with the study of a crucial compact embedding result, that is, Theorem \ref{thm1.4}, which plays a key role in proving Theorem \ref{thm1.5}. Moreover, we have
\begin{proof}[\bf{Proof of Theorem \ref{thm1.4}}.]
For any $u\in W$ and $\sigma,~\sigma^\prime>1$ satisfying $\frac{1}{\sigma}+\frac{1}{\sigma^\prime}=1$ with $\vartheta \sigma^\prime<Q$, we obtain from H\"{o}lder's inequality and Lemma \ref{lem2.1} that
\begin{align*}
   \int_{\mathbb{H}^n}\frac{|u|^\theta}{r(\xi)^\vartheta}\,\mathrm{d}\xi  
   \leq \bigg(\int_{B_1}|u|^{\theta\sigma}\,\mathrm{d}\xi\bigg)^{\frac{1}{\sigma}}\bigg(\int_{B_1}\frac{1}{r(\xi)^{\vartheta \sigma^\prime}}\,\mathrm{d}\xi\bigg)^{\frac{1}{\sigma^\prime}}+\int_{\mathbb{H}^n}|u|^\theta\,\mathrm{d}\xi \lesssim\|u\|^\theta<+\infty.
\end{align*}
Thus, the embedding $W\hookrightarrow L^\theta(\mathbb{H}^n,r(\xi)^{-\vartheta}\,\mathrm{d}\xi)$ is continuous for all $\theta\in [p^\ast,+\infty)$ and $\vartheta\in(0,Q)$. In addition, to prove the compactness, we choose a bounded sequence $\{u_k\}_k\subset W$. Hence, there exists a subsequence $\{u_{k_j}\}_j$ and $u$ in $W$ such that from Lemma \ref{lem2.1}, we obtain 
\begin{align*}
  u_{k_j}\rightharpoonup u~\text{in} ~ W,~ ~u_{k_j}\to u~~\text{in} ~~ L^\eta(B_R),~~u_{k_j}\to u~\text{a.e. in}~\mathbb{H}^n~~\text{as}~~j\to\infty\quad{\text{for all}}\quad R>0~~\text{and}~~\eta\geq 1{.}
\end{align*}
By the Egoroff's theorem, one sees that for any open ball $B_R$ and $\delta>0$, there exists $\mathbb{U}_\delta\subset B_R$ with $|\mathbb{U}_\delta|<\delta$ such that 
$$\sup_{B_R\setminus \mathbb{U}_\delta}|u_{k_j}- u|\to 0\quad\text{as} \quad j\to\infty.$$
{We can write}
\begin{align*}
    \lim_{j\to\infty}\int_{\mathbb{H}^n}\frac{|u_{k_j}- u|^\theta}{r(\xi)^\vartheta}\,\mathrm{d}\xi =\lim_{R\to\infty}\lim_{\delta\to 0}\lim_{j\to\infty}\bigg(\int_{ \mathbb{U}_\delta}+\int_{B_R\setminus \mathbb{U}_\delta}+\int_{\mathbb{H}^n\setminus B_R}\bigg)\frac{|u_{k_j}- u|^\theta}{r(\xi)^\vartheta}\,\mathrm{d}\xi.
\end{align*}
Due to H\"{o}lder's inequality and the embedding $W\hookrightarrow L^{\theta \sigma^\prime}(\mathbb{H}^n,r(\xi)^{-\vartheta\sigma^\prime}\,\mathrm{d}\xi)$, we have
\begin{align*}
 \int_{ \mathbb{U}_\delta}\frac{|u_{k_j}- u|^\theta}{r(\xi)^\vartheta}\,\mathrm{d}\xi  \leq |\mathbb{U}_\delta|^{\frac{1}{\sigma}}\bigg(\int_{ \mathbb{U}_\delta}\frac{|u_{k_j}- u|^{\theta \sigma^\prime}}{r(\xi)^{\vartheta\sigma^\prime }}\,\mathrm{d}\xi \bigg)^{\frac{1}{\sigma^\prime}} 
 \leq  |\mathbb{U}_\delta|^{\frac{1}{\sigma}}\sup_{j\in\mathbb{N}}\|u_{k_j}- u\|^\theta_{\theta \sigma^\prime,\vartheta\sigma^\prime}\lesssim |\mathbb{U}_\delta|^{\frac{1}{\sigma}}.
\end{align*}
This shows that
$$ \lim_{R\to\infty}\lim_{\delta\to 0}\lim_{j\to\infty}\int_{ \mathbb{U}_\delta}\frac{|u_{k_j}- u|^\theta}{r(\xi)^\vartheta}\,\mathrm{d}\xi=0. $$
Notice that
\begin{align*}
 \int_{B_R\setminus \mathbb{U}_\delta}\frac{|u_{k_j}- u|^\theta}{r(\xi)^\vartheta}\,\mathrm{d}\xi\leq \sup_{B_R\setminus \mathbb{U}_\delta}|u_{k_j}- u|^\theta\int_{B_R} \frac{1}{r(\xi)^{ \vartheta}}\,\mathrm{d}\xi\lesssim \sup_{B_R\setminus \mathbb{U}_\delta}|u_{k_j}- u|^\theta.
\end{align*}
It follows immediately that
$$ \lim_{R\to\infty}\lim_{\delta\to 0}\lim_{j\to\infty}\int_{B_R\setminus \mathbb{U}_\delta}\frac{|u_{k_j}- u|^\theta}{r(\xi)^\vartheta}\,\mathrm{d}\xi=0.$$
Moreover, we also have
\begin{align*}
  \int_{\mathbb{H}^n\setminus B_R}\frac{|u_{k_j}- u|^\theta}{r(\xi)^\vartheta}\,\mathrm{d}\xi\leq \frac{1}{R^\vartheta}  \int_{\mathbb{H}^n}|u_{k_j}- u|^\theta\,\mathrm{d}\xi\lesssim \frac{1}{R^\vartheta}\sup_{j\in\mathbb{N}}\|u_{k_j}- u\|^\theta\lesssim \frac{1}{R^\vartheta}.
\end{align*}
This yields that
$$ \lim_{R\to\infty}\lim_{\delta\to 0}\lim_{j\to\infty}\int_{\mathbb{H}^n\setminus B_R}\frac{|u_{k_j}- u|^\theta}{r(\xi)^\vartheta}\,\mathrm{d}\xi=0.$$
In conclusion, we obtain
$$ u_{k_j}\to u\quad\text{in}\quad L^\theta(\mathbb{H}^n,r(\xi)^{-\vartheta}\,\mathrm{d}\xi)\quad \text{as}\quad j\to\infty\quad{\text{for all}}\quad \theta\in[p^\ast,+\infty)\quad\text{and}\quad \vartheta\in(0,Q).$$ 
This concludes the proof.
\end{proof}
The following result can be directly deduced from the above theorem.
\begin{corollary}
For all $\theta\in[ p^\ast,+\infty)$ and $\vartheta\in(0,Q)$, the best constant $S_\theta$ in the embedding $W\hookrightarrow L^\theta(\mathbb{H}^n,r(\xi)^{-\vartheta}\,\mathrm{d}\xi)$ can be attained, where 
$$ S_\theta:=\inf_{u\in W\setminus\{0\}}\frac{\|u\|}{~\|u\|_{\theta,\vartheta}}. $$  
\end{corollary}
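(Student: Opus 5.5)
The plan is the direct method in the calculus of variations, with Theorem \ref{thm1.4} supplying the compactness. Since $\|cu\|=|c|\,\|u\|$ and $\|cu\|_{\theta,\vartheta}=|c|\,\|u\|_{\theta,\vartheta}$, the quotient is $0$-homogeneous. Hence
$$S_\theta=\inf\{\|u\|\colon~u\in W,~\|u\|_{\theta,\vartheta}=1\}.$$
By the continuity part of Theorem \ref{thm1.4} we have $\|u\|_{\theta,\vartheta}\lesssim\|u\|$, so $S_\theta>0$. Moreover $S_\theta<+\infty$, since any nonzero $u\in C_0^\infty(\mathbb{H}^n)$ is an admissible competitor.

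First, I take a minimizing sequence $\{u_k\}_k\subset W$ with $\|u_k\|_{\theta,\vartheta}=1$ and $\|u_k\|\to S_\theta$. This sequence is bounded in $W$, and $(W,\|\cdot\|)$ is reflexive. So, up to a subsequence, $u_k\rightharpoonup u$ in $W$.

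Second, I apply the compactness part of Theorem \ref{thm1.4}. It gives, along a further subsequence, $u_k\to u$ strongly in $L^\theta(\mathbb{H}^n,r(\xi)^{-\vartheta}\mathrm{d}\xi)$. The limit here coincides with the weak limit in $W$, because both convergences imply a.e. convergence along a subsequence. Strong convergence then yields $\|u\|_{\theta,\vartheta}=\lim_k\|u_k\|_{\theta,\vartheta}=1$. In particular $u\neq0$ and $u$ is admissible.

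Third, I use weak lower semicontinuity of the norm $\|\cdot\|$ on $W$; it is a norm, hence convex and continuous. This gives
$$\|u\|\leq\liminf_{k\to\infty}\|u_k\|=S_\theta.$$
Since also $\|u\|\geq S_\theta$ by the definition of $S_\theta$, we get $\|u\|/\|u\|_{\theta,\vartheta}=S_\theta$, so the infimum is attained. Replacing $u$ by $|u|$ via \eqref{eq2.3} even gives a nonnegative minimizer, although this is not needed.

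The only delicate point is making sure the normalization survives the limit, that is, that no mass is lost, so that $u\neq0$. Weak convergence in $W$ alone would allow $u=0$, for instance under dilation or concentration. This is exactly where the strong convergence from Theorem \ref{thm1.4} is essential, and it is the reason the hypothesis $\vartheta\in(0,Q)$ is needed.
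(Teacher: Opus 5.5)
Your proposal is correct and matches the paper, which gives no written proof and only says the corollary follows directly from the compact embedding in Theorem \ref{thm1.4}. Your direct-method argument (normalized minimizing sequence, weak compactness in the reflexive space $W$, strong convergence in $L^\theta(\mathbb{H}^n,r(\xi)^{-\vartheta}\mathrm{d}\xi)$ to keep $\|u\|_{\theta,\vartheta}=1$, and weak lower semicontinuity of $\|\cdot\|$) is exactly the omitted deduction, and your identification of the strong limit with the weak limit is valid.
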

\section{Existence of ground state solutions: Proof of Theorem \ref{thm1.5}}\label{sec8}
In this section, by using typical variational approaches, we address the presence of ground state solutions to the {problem} \eqref{main problem}. To begin, we define the Palais-Smale compactness criterion and the famous mountain pass theorem of Rabinowitz \cite{Rabinowitz-1986} as follows.
\begin{definition}
 Let $X$ be a Banach space and $J:X\to\mathbb{R}$ be a functional of class $C^1(X,\mathbb{R})$. We say that  $J$ satisfies the Palais-Smale compactness condition at a suitable level $c\in \mathbb{R}$, if for any sequence $\{u_k\}_k\subset X$ such that
 \begin{equation}\label{eq6.1}
 J (u_k)\to c\quad\text{and}\quad\sup_{\|\varphi\|_{X}=1}|\langle{J^\prime(u_k),\varphi}\rangle|\to 0\quad\text{as}\quad k\to\infty   
 \end{equation}
has a strongly convergent subsequence in  $X$. Note that the sequence $\{u_k\}_k\subset X$ satisfying \eqref{eq6.1} is known as a Palais-Smale sequence at level $c\in \mathbb{R}$, which is denoted by the symbol $(PS)_c$.
\end{definition}
\begin{theorem}\label{thm6.2}
Let $X$ be a real Banach space and $J\in C^1(X,\mathbb{R})$. Suppose $J(0)=0$ and there hold:
\begin{itemize}
    \item[\textnormal{(a)}] There exist two constants $\alpha,\rho>0$ such that $J(u)\geq \alpha$ for all $u\in X$ with $\|u\|=\rho$.
     \item[\textnormal{(b)}] There exists $e\in X$ satisfying $\|e\|>\rho$ such that $J(e)<0$.  
\end{itemize}
Define $$\Gamma:=\big\{\gamma\in C^1([0,1],X)\colon \gamma(0)=0,~\gamma(1)=e\big\}. $$
Then $$c:=\inf_{\gamma\in \Gamma}\max_{s\in[0,1]} J(\gamma(s))\geq \alpha $$
and there exists a $(PS)_c$ sequence $\{u_k\}_k$ for $J$ in $X$. In addition, if $J$ satisfies the Palais-Smale condition, then $J$ possesses a critical value $c\geq\alpha$.   
\end{theorem}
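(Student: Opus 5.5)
The plan is the classical Ambrosetti--Rabinowitz argument via a quantitative deformation lemma, with one extra step because $\Gamma$ consists of $C^1$ paths rather than merely continuous ones. I would carry out four steps, in this order.

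\emph{Step 1: the lower bound $c\geq\alpha$.} Fix $\gamma\in\Gamma$. The map $s\mapsto\|\gamma(s)\|$ is continuous on $[0,1]$, equals $0$ at $s=0$ and $\|e\|>\rho$ at $s=1$. By the intermediate value theorem there is $s_0$ with $\|\gamma(s_0)\|=\rho$, so $\max_{s}J(\gamma(s))\geq J(\gamma(s_0))\geq\alpha$ by (a). Taking the infimum over $\gamma$ gives $c\geq\alpha$. In particular $c$ is finite, since $\Gamma\neq\emptyset$ (the segment $s\mapsto se$ belongs to it).

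\emph{Step 2: reduce to continuous paths.} Let $\Gamma_0$ be the set of continuous paths from $0$ to $e$, and $c_0$ the corresponding minimax value. Clearly $c_0\leq c$. For the reverse inequality, take $\gamma\in\Gamma_0$. Its image is compact, so $J$ is uniformly continuous on a neighbourhood of it. Approximate $\gamma$ uniformly by a $C^1$ path with the same endpoints: first a fine piecewise-linear interpolation, then smoothing of the corners, keeping the endpoints fixed. The maximum of $J$ along the path changes by an arbitrarily small amount, hence $c\leq c_0$ and $c=c_0$. From now on we may work with continuous paths.

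\emph{Step 3: existence of a $(PS)_c$ sequence, by contradiction.} Suppose no $(PS)_c$ sequence exists. Then there are $\varepsilon_0,\delta>0$ with $\|J'(u)\|_{X^*}\geq\delta$ whenever $|J(u)-c|\leq2\varepsilon_0$.

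1. Choose $\varepsilon<\min\{\varepsilon_0,\alpha/2,\delta^2/8\}$ (whatever the deformation lemma demands).
2. Apply the quantitative deformation lemma in the Banach-space version, built from a locally Lipschitz pseudo-gradient field. It gives $\eta\in C([0,1]\times X,X)$ such that $\eta(1,u)=u$ whenever $J(u)\notin[c-2\varepsilon,c+2\varepsilon]$, and $\eta(1,\{J\leq c+\varepsilon\})\subset\{J\leq c-\varepsilon\}$.
3. The endpoints are fixed by $\eta(1,\cdot)$. Indeed $J(0)=0$ and $J(e)<0$, and $c-2\varepsilon\geq\alpha-2\varepsilon>0$, so both values lie below the interval $[c-2\varepsilon,c+2\varepsilon]$.
4. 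Pick $\gamma$ with $\max J\circ\gamma\leq c+\varepsilon$. Then $\eta(1,\gamma(\cdot))\in\Gamma_0$ and $\max J\circ\eta(1,\gamma)\leq c-\varepsilon<c_0$. This contradicts Step 2.

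\emph{Step 4: the critical value.} If $J$ satisfies the Palais--Smale condition, a $(PS)_c$ sequence has a subsequence $u_k\to u$ in $X$. Continuity of $J$ and of $J'$ gives $J(u)=c\geq\alpha$ and $J'(u)=0$, so $c$ is a critical value.

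The main obstacle is Step 3, which must be done in a general Banach space. There is no gradient there, so the deformation has to be built from a pseudo-gradient vector field and a locally Lipschitz cutoff. Alternatively, I would bypass the deformation by applying Ekeland's variational principle to $\gamma\mapsto\max J\circ\gamma$ on the complete metric space $\Gamma_0$ with the sup metric. Either way, Step 2 is what lets the $C^1$ definition of $\Gamma$ be reconciled with the merely continuous deformed paths.
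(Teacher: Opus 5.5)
Your proof is correct. The paper gives no argument for this statement; it quotes it as Rabinowitz's mountain pass theorem. The classical proof behind that citation is exactly your Steps 1, 3 and 4: a deformation built from a locally Lipschitz pseudo-gradient field. Rabinowitz's notes, however, state the theorem for paths in $C([0,1],X)$ and assume the Palais--Smale condition from the start.

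Your write-up differs from that citation in two useful ways.
\begin{enumerate}
\item Step 3 starts from the assumption that no $(PS)_c$ sequence exists and uses a quantitative deformation lemma of Willem type. This gives the existence of a $(PS)_c$ sequence without assuming (PS), which is part of what is claimed here.
\item Step 2 is genuinely needed for the version as stated. The paper takes $\Gamma\subset C^1([0,1],X)$, but $\eta(1,\gamma(\cdot))$ is in general only continuous. Your contradiction therefore closes only once you know that the $C^1$ and $C^0$ minimax levels coincide.
\end{enumerate}
So the citation buys brevity, while your route actually justifies the precise formulation.

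One small correction in Step 2. In infinite dimensions a continuous $J$ need not be uniformly continuous on any neighbourhood of a compact set. What you actually use is still true, by a finite covering argument: for every $\varepsilon>0$ there is $\delta>0$ such that $|J(x)-J(y)|<\varepsilon$ whenever $y\in\gamma([0,1])$ and $\|x-y\|<\delta$. Combine this with a smoothing that keeps each point on the same segment of the polygon. For instance, reparametrize each segment by a smooth $\phi\colon[0,1]\to[0,1]$ with $\phi(0)=0$, $\phi(1)=1$ and $\phi'(0)=\phi'(1)=0$. The resulting path stays uniformly within $\delta$ of $\gamma$ and leaves the endpoints untouched, which gives $c\le c_0$.
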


The following lemma shows that the mountain pass geometrical structures are satisfied by the energy functional $I$.
\begin{lemma}\label{lem6.3}
  Suppose that the hypotheses $(f_1)-(f_3)$ are satisfied. Then, the following results hold:
    \begin{itemize}
        \item[\textnormal{(a)}]  Any nontrivial critical point of the energy functional $I$ is nonnegative.
        \item[\textnormal{(b)}] There exist $\delta,\rho>0$ such that $I(u)\geq \delta$ for all $u\in W$ and $\|u\|=\rho$.
       \item[\textnormal{(c)}] There exists $e\in W$ with $\|e\|>\rho$ such that $I(e)<0$.
    \end{itemize}
\end{lemma}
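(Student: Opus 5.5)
For part (a), I will test the equation $\langle I'(u),\psi\rangle=0$ with $\psi=u^-=\max\{-u,0\}\in W$; by \eqref{eq2.3}, truncations stay in $W$. On $\{u<0\}$ we have $D_Hu^-=-D_Hu$ a.e., and $D_Hu^-=0$ elsewhere. Since $f(\xi,s)=0$ for $s\le 0$ by $(f_1)$, the right-hand side $\int f(\xi,u)u^-r^{-\vartheta}$ vanishes. This leaves $-\|D_Hu^-\|_p^p-\|D_Hu^-\|_Q^Q=0$, so $u^-$ is constant. Because $u^-\in W\hookrightarrow L^{p^\ast}$, that constant is zero, and hence $u\ge0$.

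For part (b), I will use the growth bound \eqref{eq1.2}, namely $|F(\xi,s)|\le\varepsilon|s|^q+C_\varepsilon|s|^\ell\Phi_{\alpha,j_0}(s)$ with $\alpha>\alpha_0$ and $\ell>\max\{p^\ast,Q\}$. The first term is controlled by Theorem \ref{thm1.4}: $\int|u|^q r^{-\vartheta}\lesssim\|u\|^q$, since $q\ge p^\ast$. For the second term I will use H\"older with exponents $\sigma,\sigma'$ and split the weight as $r^{-\vartheta}=r^{-\vartheta/\sigma}r^{-\vartheta/\sigma'}$:
\[
\int\frac{|u|^\ell\Phi_{\alpha,j_0}(u)}{r^\vartheta}\le\Big(\int\frac{|u|^{\ell\sigma'}}{r^{\vartheta}}\Big)^{1/\sigma'}\Big(\int\frac{\Phi_{\alpha,j_0}(u)^\sigma}{r^\vartheta}\Big)^{1/\sigma}.
\]
By Lemma \ref{lem2.2}, $\Phi_{\alpha,j_0}(u)^\sigma\le\Phi_{\alpha\sigma,j_0}(u)=\Phi_{\alpha\sigma\|u\|^{Q'},j_0}(u/\|u\|)$. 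If $\|u\|=\rho$ is small enough that $\alpha\sigma\rho^{Q'}\le\alpha_{\vartheta,Q}$, Theorem \ref{thm1.1} bounds this factor uniformly. The first factor is $\lesssim\|u\|^\ell$, again by Theorem \ref{thm1.4}. Together these give $\int F(\xi,u)r^{-\vartheta}\le C(\|u\|^q+\|u\|^\ell)$ on the sphere $\|u\|=\rho$.

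The main obstacle is the left side, which mixes the two homogeneities $\frac1p\|D_Hu\|_p^p+\frac1Q\|D_Hu\|_Q^Q$, while the norm is $\|u\|^Q=\|D_Hu\|_p^Q+\|D_Hu\|_Q^Q$. For $\|u\|\le1$ we have $\|D_Hu\|_p\le1$, so $\|D_Hu\|_p^p\ge\|D_Hu\|_p^Q$. Hence
\[
I(u)\ge\tfrac1Q\|u\|^Q-C(\|u\|^q+\|u\|^\ell)=\tfrac1Q\rho^Q-C(\rho^q+\rho^\ell).
\]
Since $q,\ell>Q$, this is at least some $\delta>0$ once $\rho$ is small.

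For part (c), I will fix a nonnegative $\varphi\in C_0^\infty(\mathbb{H}^n)\setminus\{0\}$. By $(f_3)$, $F(\xi,s)\ge C_1s^\mu-C_2$ for $s\ge0$, where $C_1,C_2$ are uniform in $\xi$ by integrating the inequality from $s_1=1$ and using a positive lower bound on $F(\xi,1)$. If that lower bound is not uniform in $\xi$, I will instead use $(f_5)$, which gives $F\ge\beta s^\varrho$, though the lemma is stated only under $(f_1)$--$(f_3)$. Since $\varphi$ has compact support and $r^{-\vartheta}$ is locally integrable, this yields
\[
I(t\varphi)\le\tfrac{t^p}{p}\|D_H\varphi\|_p^p+\tfrac{t^Q}{Q}\|D_H\varphi\|_Q^Q-C_1t^\mu\int\varphi^\mu r^{-\vartheta}+C_2\int_{\operatorname{supp}\varphi}r^{-\vartheta}.
\]
Because $\mu>Q>p$, this tends to $-\infty$ as $t\to\infty$, so $e=t\varphi$ with $t$ large satisfies $\|e\|>\rho$ and $I(e)<0$.
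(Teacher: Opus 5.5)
Your parts (a) and (b) follow the paper's argument. In (a) you test with $u^-$. In (b) you use \eqref{eq1.2}, H\"older, Lemma~\ref{lem2.2}, Theorem~\ref{thm1.1} and Theorem~\ref{thm1.4}, together with $\|D_Hu\|_p^p\ge\|D_Hu\|_p^Q$ for $\|u\|\le 1$; the paper uses this last fact only implicitly through $\rho=\min\{1,\tau\}$. You also correctly see that no smallness of $\varepsilon$ is needed in (b), since $q>Q$.

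The gap is in (c). Your bound $F(\xi,s)\ge C_1s^\mu-C_2$ with $\xi$-independent constants rests on $\inf_\xi F(\xi,1)>0$. Hypotheses $(f_1)$--$(f_3)$ give only $F(\xi,1)>0$ for each $\xi$, and $F(\cdot,1)$ is merely measurable. Your fallback to $(f_5)$ uses a hypothesis the lemma does not assume. So, as written, (c) is not proved under $(f_1)$--$(f_3)$.

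The missing observation is that no uniformity in $\xi$ is needed. At points where $\varphi(\xi)>0$, $(f_3)$ gives
\[
\frac{d}{ds}\Big(s^{-\mu}F(\xi,s\varphi(\xi))\Big)=s^{-\mu-1}\Big(f(\xi,s\varphi)\,s\varphi-\mu F(\xi,s\varphi)\Big)\ge 0 .
\]
Hence $F(\xi,s\varphi(\xi))\ge s^\mu F(\xi,\varphi(\xi))$ pointwise for $s\ge1$; where $\varphi(\xi)=0$ both sides vanish. It follows that
\[
I(s\varphi)\le \frac{s^p}{p}\|D_H\varphi\|_p^p+\frac{s^Q}{Q}\|D_H\varphi\|_Q^Q-s^\mu\int_{\mathbb{H}^n}\frac{F(\xi,\varphi)}{r(\xi)^\vartheta}\,\mathrm{d}\xi .
\]
The last integral is strictly positive, because $F(\xi,\varphi(\xi))>0$ on $\{\varphi>0\}$, a set of positive measure. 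It is finite by \eqref{eq1.2} and the compact support of $\varphi$. Since $\mu>Q>p$, this gives $I(s\varphi)\to-\infty$. This is exactly the paper's route, with $u$ normalized by $\|u\|=1$ in place of $\varphi$.

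Alternatively, you can keep your uniform version by replacing $s_1=1$ with a large $s_1$. The uniformity in $(f_2)$ (take any $\alpha<\alpha_0$) gives $f(\xi,s)\ge 1$ for all $s\ge s_*$ and all $\xi$. Hence $\inf_\xi F(\xi,s_*+1)\ge 1$, and integrating $(f_3)$ from $s_1=s_*+1$ yields $\xi$-independent $C_1,C_2$.
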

\begin{proof}
 Let $u\in W\setminus\{0\}$ be a critical point of the energy functional $I$. Denote $u=u^+-u^-$ and take $u^-\in W$ as a test function. Thus, we obtain from $\langle{I^\prime(u),u^{-}}\rangle=0$ and $(f_1)$ that
    $$\|D_Hu^-\|^p_p+\|D_H u^-\|^Q_Q=0. $$
This shows that
$$ \|D_Hu^-\|_p=\|D_H u^-\|_Q=0,~\text{that is},~~\|u^-\|=0. $$
It follows that $u^-=0$ a.e. in $\mathbb{H}^n$ and $u=u^+\geq 0$  a.e. in $\mathbb{H}^n$. This finishes the proof of $\textnormal{(a)}$. Next, we assume that $\sigma,\sigma^\prime>1$ with $\frac{1}{\sigma}+\frac{1}{\sigma^\prime}=1$ and $\tau>0$ such that $\alpha \sigma^\prime\tau^{\frac{Q}{Q-1}}\leq\alpha_{\vartheta,Q}$ for all $u\in W$ with $\|u\|\leq\tau$. By using \eqref{eq1.2}, H\"older's inequality, {Lemma \ref{lem2.2}}, Theorem \ref{thm1.1} and Theorem \ref{thm1.4}, one has
    \begin{align*}
      \int_{\mathbb{H}^n}\frac{F(\xi,u)}{r(\xi)^\vartheta}\,\mathrm{d}\xi &\leq \varepsilon \|u\|^q_{q,\vartheta}+C_\varepsilon \|u\|^\ell_{\ell\sigma,\vartheta}\bigg( \int_{\mathbb{H}^n}\frac{\Phi_{\alpha \sigma^\prime,j_0}(u)}{r(\xi)^\vartheta}\,\mathrm{d}\xi \bigg)^{\frac{1}{\sigma^\prime}}\\
      &\leq \varepsilon S^{-q}_q\|u\|^q+C_\varepsilon S^{-\ell}_{\ell\sigma}\|u\|^\ell\bigg( \int_{\mathbb{H}^n}\frac{\Phi_{\alpha_{\vartheta,Q},j_0}(u/\|u\|)}{r(\xi)^\vartheta}\,\mathrm{d}\xi \bigg)^{\frac{1}{\sigma^\prime}}\\
      &\leq D\big(\varepsilon \|u\|^q+\|u\|^\ell\big),~~\forall~\varepsilon>0,
    \end{align*}
where $D>0$ is a constant defined as follows:
\begin{align*}
    D:=\max\Bigg\{S^{-q}_q,C_\varepsilon S^{-\ell}_{\ell\sigma}\bigg( \int_{\mathbb{H}^n}\frac{\Phi_{\alpha_{\vartheta,Q},j_0}(u/\|u\|)}{r(\xi)^\vartheta}\,\mathrm{d}\xi \bigg)^{\frac{1}{\sigma^\prime}}\Bigg\}<+\infty.
\end{align*}
Set $\rho=\min\{1,\tau\}$. It follows that for all $u\in W$ with $\|u\|=\rho$, one {can obtain from \eqref{eq1.18} that}
\begin{align*}
    I(u)\geq \|u\|^Q\bigg(\frac{1}{Q}- D\big(\varepsilon \|u\|^{q-Q}+\|u\|^{\ell-Q}\big)\bigg).
\end{align*}
In addition, we choose $0<\varepsilon<\frac{1}{QD}$. Using the fact that $q, \ell>Q$, one can able to choose $\rho>0$ small enough such that $$\frac{1}{Q}- D\big(\varepsilon \rho^{q-Q}+\rho^{\ell-Q}\big)>0.$$
In conclusion, we have
\begin{align*}
  I(u)\geq \rho^{Q}\bigg(\frac{1}{Q}- D\big(\varepsilon \rho^{q-Q}+\rho^{\ell-Q}\big)\bigg) :=\delta>0\quad\text{for}\quad \|u\|=\rho. 
\end{align*}
This completes the proof of $\textnormal{(b)}$. Finally, let $u\in W\setminus\{0\}$ and $u\geq 0$ be such that $\|u\|=1$. Define a function  $\mathcal{H}\colon [1,+\infty)\to\mathbb{R}$ by $$\mathcal{H}(s):=s^{-\mu}F(\xi,su)-F(\xi,u) \quad{\text{for all}}\quad s\geq 1~~~\text{and}~~~u\in (0,+\infty).$$ 
Due to $(f_3)$, one sees that the map $ s\mapsto \mathcal{H}(s)$ is increasing in $[1,+\infty)$. It follows that $$F(\xi,su)\geq s^{\mu}F(\xi,u)\quad{\text{for all}}\quad s\geq 1~~~\text{and}~~~u\in (0,+\infty).$$ 
By choosing $s> 1$ large enough, we obtain
\begin{align*}
    I(su)\leq \frac{2}{p}s^{Q}-s^\mu \int_{\mathbb{H}^n}\frac{F(\xi,u)}{r(\xi)^\vartheta}\,\mathrm{d}\xi\to-\infty\quad\text{as}\quad s\to\infty.
\end{align*}
The proof now follows by choosing $ e = su$ with $s$ sufficiently large. This completes the proof of $\textnormal{(c)}$ and consequently of Lemma \ref{lem6.3}.
\end{proof}
Notice that by using Lemma \ref{lem6.3} and Theorem \ref{thm6.2}, one sees that the mountain pass minimax level 
 $$c:=\inf_{\gamma\in \Gamma}\max_{s\in[0,1]} I(\gamma(s))\geq \rho>0,$$
 where $$\Gamma:=\big\{\gamma\in C^1([0,1],W)\colon \gamma(0)=0,~ I(\gamma(1))<0\big\}. $$

The following result characterizes the range of the mountain pass minimax level $c$.
\begin{lemma}\label{lem6.4}
There exists $\beta_0>0$ large enough such that if the hypothesis $(f_5)$ is satisfied for all $\beta\geq \beta_0$, then there holds:
\begin{align}\label{eq6.2}
0<c<c_0:=\min\Bigg\{\frac{1}{2^{Q}}\bigg(\frac{\mu-Q}{Q\mu}\bigg)^{\frac{Q}{p}}\bigg(\frac{\alpha_{\vartheta,Q}}{\alpha_0}\bigg)^{Q-1},\frac{1}{2^p}\bigg(\frac{\mu-Q}{Q\mu}\bigg)\bigg(\frac{\alpha_{\vartheta,Q}}{\alpha_0}\bigg)^{(1-\frac{1}{Q})p}\Bigg\}.  
\end{align}    
\end{lemma}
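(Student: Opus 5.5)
Since $c>0$ is already known from Lemma \ref{lem6.3} and Theorem \ref{thm6.2}, only the upper bound $c<c_0$ needs an argument. The plan is to bound $c$ by the maximum of $I$ along one explicit ray and to use $(f_5)$ with a large $\beta$ to make that maximum small. Fix a nonnegative $\phi\in C_0^\infty(\mathbb{H}^n)\setminus\{0\}$; it has compact support away from infinity and finite weighted norm $\|\phi\|_{\varrho,\vartheta}^\varrho=\int_{\mathbb{H}^n}\phi^\varrho r(\xi)^{-\vartheta}\,\mathrm{d}\xi\in(0,+\infty)$, which is finite because $\vartheta<Q$. By part (c) of Lemma \ref{lem6.3}, $I(s\phi)\to-\infty$ as $s\to\infty$. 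Hence for some $s_1$ large the path $\gamma(s)=s s_1\phi$ lies in $\Gamma$ (it is even $C^1$), and therefore
\[
c\le \max_{s\ge0} I(s\phi).
\]

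Next I estimate $I(s\phi)$ using $(f_5)$, which gives $F(\xi,s\phi)\ge \beta s^\varrho\phi^\varrho$:
\[
I(s\phi)\le \frac{s^p}{p}\|D_H\phi\|_p^p+\frac{s^Q}{Q}\|D_H\phi\|_Q^Q-\beta s^\varrho\|\phi\|^\varrho_{\varrho,\vartheta}=:g_\beta(s).
\]
I split $s\in[0,1]$ from $s\ge1$. For $s\ge1$ we have $s^p\le s^Q$, so $g_\beta(s)\le A s^Q-\beta B s^\varrho$ with $A=\frac1p\|D_H\phi\|_p^p+\frac1Q\|D_H\phi\|_Q^Q$ and $B=\|\phi\|_{\varrho,\vartheta}^\varrho$. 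For $s\le1$ we have $s^Q\le s^p$, so $g_\beta(s)\le A s^p-\beta B s^\varrho$. In both cases the elementary maximization of $As^m-\beta Bs^\varrho$ with $m<\varrho$, which uses $\varrho>\max\{p^\ast,Q\}>Q>p$, gives
\[
\max_{s\ge0}(As^m-\beta Bs^\varrho)=\Big(\frac1m-\frac1\varrho\Big)\frac{(mA)^{\varrho/(\varrho-m)}}{(\varrho\beta B)^{m/(\varrho-m)}}.
\]
This tends to $0$ as $\beta\to\infty$. It follows that $\max_{s\ge0}I(s\phi)\le C(\phi)\big(\beta^{-p/(\varrho-p)}+\beta^{-Q/(\varrho-Q)}\big)\to0$.

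Since $c_0>0$ depends only on $\mu,Q,p,\vartheta,\alpha_0$ and not on $\beta$, I can choose $\beta_0$ so that this bound is below $c_0$ for all $\beta\ge\beta_0$, which gives $c<c_0$. One subtlety is that $\phi$ must be chosen independently of $\beta$. Since $\phi$ is an arbitrary fixed test function, this holds, so the constant $C(\phi)$ is genuinely fixed.

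The main obstacle is the requirement that $(f_5)$ hold uniformly in $\beta$: a given $f$ satisfies $(f_5)$ only for its own $\beta$, so "for all $\beta\ge\beta_0$" is interpreted as an assumption that $\beta$ is large. I would state this explicitly, and I would check that the maximization step does not depend on any property of $F$ other than the lower bound in $(f_5)$.
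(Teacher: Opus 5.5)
Your proposal is correct and follows essentially the paper's argument: bound $c$ by the maximum of $I$ along the ray through a fixed nonnegative test function, use $(f_5)$ to dominate $I(s\phi)$ by $As^m-\beta Bs^\varrho$, maximize explicitly, and let $\beta\to\infty$ while $c_0$ stays fixed. The only differences are cosmetic. The paper uses a specific cut-off $\varphi$ (equal to $1$ on $B_{1/2}$, with $|D_H\varphi|_H\le 2$) and first takes $\beta>\beta_1$ so that $I(\varphi)<0$; this keeps the path on $s\in[0,1]$, where only the $s^p$ bound is needed. Your split into $s\le 1$ and $s\ge 1$ instead covers the whole ray, which is slightly more careful than the paper's intermediate step $\max_{s\ge 0}I(s\varphi)\le\cdots$.
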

\begin{proof}
Let $\varphi\in C^\infty_0(\mathbb{H}^n,[0,1])$ be a cut-off function satisfying $\varphi\equiv 1~~\text{in}~~B_{\frac{1}{2}}$, $\varphi\equiv 0~~\text{in}~~\mathbb{H}^n\setminus B_1$ and $|D_H \varphi|_H\leq 2~~\text{in}~~\mathbb{H}^n$.
Moreover, by using $(f_5)$, we obtain for all $s\in[0,1]$ that
\begin{align*}
I(s\varphi)&\leq \frac{s^p}{p}\|D_H \varphi\|^p_{L^p(B_1)}+\frac{s^{Q}}{Q}\|D_H \varphi\|^Q_{L^Q(B_1)}-\beta s^\varrho\|r(\xi)^{-\vartheta}\|_{L^1\big(B_{\frac{1}{2}}\big)}\\
 &\leq \bigg(\int_{\Sigma}\,\mathrm{d}\mu(\xi^\ast)\bigg) \Bigg[\bigg(\frac{2^{p}}{Qp}+\frac{2^{Q}}{Q^2}\bigg) s^p-\frac{\beta }{2^{Q-\vartheta}(Q-\vartheta)}s^\varrho\Bigg].
\end{align*}
Let $h\colon[0,1]\to W$ be such that $h(s)=s\varphi$. In addition, we choose $\beta_1>0$ in such a way that 
 \begin{align*}
        I(\varphi)\leq \bigg(\int_{\Sigma}\,\mathrm{d}\mu(\xi^\ast)\bigg) \Bigg[\bigg(\frac{2^{p}}{Qp}+\frac{2^{Q}}{Q^2}\bigg) -\frac{\beta_1 }{2^{Q-\vartheta}(Q-\vartheta)}\Bigg]<0{\quad\text{for all}\quad}\beta>\beta_1.
    \end{align*}
It follows that $h\in \Gamma$. Now, by simple computations, we deduce that 
\begin{align*}
    c&\leq \max_{s\in[0,1]} I(h(s))\leq \max_{s\geq 0} I(s\varphi)\leq \bigg(\int_{\Sigma}\,\mathrm{d}\mu(\xi^\ast)\bigg) \max_{s\geq0}\Bigg[\bigg(\frac{2^{p}}{Qp}+\frac{2^{Q}}{Q^2}\bigg)s^p -\frac{\beta }{2^{Q-\vartheta}(Q-\vartheta)}s^\varrho\Bigg]\\
    &=\bigg(\int_{\Sigma}\,\mathrm{d}\mu(\xi^\ast)\bigg)\bigg(1-\frac{p}{\varrho}\bigg)\bigg(\frac{2^{p}}{Qp}+\frac{2^{Q}}{Q^2}\bigg)^{\frac{\varrho}{\varrho-p}}\bigg(\frac{2^{Q-\vartheta}(Q-\vartheta)p}{\beta \varrho}\bigg)^{\frac{p}{\varrho-p}}\to 0\quad\text{as}\quad \beta\to\infty.
\end{align*}
It follows that there exists $\beta_0>\beta_1$ large enough such that $c<c_0$ for all $\beta\geq\beta_0$, where $c_0$ is given in \eqref{eq6.2}. This finishes the proof.
\end{proof}
\begin{lemma}\label{lem5.5}
Every $(PS)_c$ sequence $\{u_k\}_k\subset W$ for $I$ is bounded in $W$, where $c\in(0,c_0)$ and $c_0$ is given in Lemma \ref{lem6.4}. In addition, it holds:
    \begin{align}\label{eq6.3}
        \limsup_{k\to\infty}\|u_k\|^{\frac{Q}{Q-1}}<\frac{\alpha_{\vartheta,Q}}{\alpha_0}.
    \end{align}
\end{lemma}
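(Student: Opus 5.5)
The plan is the standard Ambrosetti--Rabinowitz argument, combining the Palais--Smale identities with $(f_3)$. Let $\{u_k\}_k\subset W$ be a $(PS)_c$ sequence with $c\in(0,c_0)$, so that $I(u_k)=c+o_k(1)$ and $\|I'(u_k)\|_*=o_k(1)$. I would compute $\mu I(u_k)-\langle I'(u_k),u_k\rangle$:
\begin{align*}
\mu c+o_k(1)+o_k(1)\|u_k\|&=\Big(\frac{\mu}{p}-1\Big)\|D_Hu_k\|_p^p+\Big(\frac{\mu}{Q}-1\Big)\|D_Hu_k\|_Q^Q\\
&\quad+\int_{\mathbb{H}^n}\frac{f(\xi,u_k)u_k-\mu F(\xi,u_k)}{r(\xi)^\vartheta}\,\mathrm{d}\xi .
\end{align*}
By $(f_1)$ and $(f_3)$ the last integral is nonnegative. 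The integrand vanishes where $u_k\le 0$, since $f$ and $F$ are zero there. Because $p<Q<\mu$, we get
\[
\frac{\mu-Q}{Q}\Big(\|D_Hu_k\|_p^p+\|D_Hu_k\|_Q^Q\Big)\le \mu c+o_k(1)+o_k(1)\|u_k\|.
\]
Boundedness then follows by contradiction. If $\|u_k\|\to\infty$, then $\max\{\|D_Hu_k\|_p,\|D_Hu_k\|_Q\}\to\infty$. Since $\|u_k\|\le \|D_Hu_k\|_p+\|D_Hu_k\|_Q\le 2\max\{\cdot\}$, the left-hand side grows at least like $\|u_k\|^p/2^p$, while the right-hand side grows at most linearly. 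This is impossible because $p>1$.

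For \eqref{eq6.3}, I would pass to the limit in the same inequality to obtain the sharper bounds
\[
\limsup_{k}\|D_Hu_k\|_p^p\le \frac{Q\mu c}{\mu-Q}\quad\text{and}\quad \limsup_{k}\|D_Hu_k\|_Q^Q\le \frac{Q\mu c}{\mu-Q}.
\]
Boundedness makes $o_k(1)\|u_k\|=o_k(1)$, and the coefficient $\frac{\mu}{p}-1$ is at least $\frac{\mu-Q}{Q}$. Then
\[
\|u_k\|^Q=\|D_Hu_k\|_p^Q+\|D_Hu_k\|_Q^Q,
\]
which gives
\[
\limsup_k\|u_k\|^Q\le \Big(\frac{Q\mu c}{\mu-Q}\Big)^{Q/p}+\frac{Q\mu c}{\mu-Q}.
\]
Now I use $c<c_0$. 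The first entry of $c_0$ gives
\[
\Big(\frac{Q\mu c}{\mu-Q}\Big)^{Q/p}<2^{-Q^2/p}\Big(\frac{\alpha_{\vartheta,Q}}{\alpha_0}\Big)^{(Q-1)Q/p}.
\]
This form is awkward, so I would instead bound each piece by half of $(\alpha_{\vartheta,Q}/\alpha_0)^{Q-1}$. Concretely, by the second entry of $c_0$,
\[
\frac{Q\mu c}{\mu-Q}<2^{-p}\Big(\frac{\alpha_{\vartheta,Q}}{\alpha_0}\Big)^{(Q-1)p/Q}.
\]
Raising to the power $Q/p$ gives
\[
\limsup_k\|D_Hu_k\|_p^Q<2^{-Q}\Big(\frac{\alpha_{\vartheta,Q}}{\alpha_0}\Big)^{Q-1}.
\]
For the $Q$-term, the first entry should be used. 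Note that $c_0$ was designed with exponents matching $\|u_k\|^Q\approx(\alpha_{\vartheta,Q}/\alpha_0)^{Q-1}$.

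The main obstacle is exactly this bookkeeping. I need to check which entry of $c_0$ controls which term, and whether the factors $2^{-Q},2^{-p}$ and the power $Q/p$ combine so that
\[
\limsup_k\|u_k\|^Q<\Big(\frac{\alpha_{\vartheta,Q}}{\alpha_0}\Big)^{Q-1}.
\]
Raising this to the power $\frac{1}{Q-1}$ gives \eqref{eq6.3}. If the constants as written do not close directly, I would split into the cases $\frac{Q\mu c}{\mu-Q}\le 1$ and $\frac{Q\mu c}{\mu-Q}>1$. In the first case the $p$-power dominates, and in the second the $Q$-power dominates. Since $Q/p>1$, this lets one bound $\limsup\|u_k\|^Q$ by twice the larger of the two pieces. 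Each piece is then controlled by the corresponding entry of the minimum in \eqref{eq6.2}, using $\|u_k\|\le\|u_k\|_W\le 2\max\{\|D_Hu_k\|_p,\|D_Hu_k\|_Q\}$ from \eqref{eq1.99}.
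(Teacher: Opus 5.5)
Your argument is correct, and the bookkeeping you flag as the main obstacle closes directly, with no case split. Write $A:=\frac{Q\mu c}{\mu-Q}$. The inequality you already derived from the first entry of $c_0$ in \eqref{eq6.2} gives, before raising to the power $Q/p$,
\[
A<\frac{1}{2^{Q}}\Big(\frac{\mu-Q}{Q\mu}\Big)^{\frac{Q}{p}-1}\Big(\frac{\alpha_{\vartheta,Q}}{\alpha_0}\Big)^{Q-1}\le \frac{1}{2^{Q}}\Big(\frac{\alpha_{\vartheta,Q}}{\alpha_0}\Big)^{Q-1}.
\]
The second step uses $0<\frac{\mu-Q}{Q\mu}<1$ and $\frac{Q}{p}>1$. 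This is exactly the bound you need for the $\|D_Hu_k\|_Q^Q$-piece.

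Combine it with your bound $A^{Q/p}<2^{-Q}(\alpha_{\vartheta,Q}/\alpha_0)^{Q-1}$ from the second entry. You get
\[
\limsup_k\|u_k\|^Q\le A^{Q/p}+A<2^{1-Q}\Big(\frac{\alpha_{\vartheta,Q}}{\alpha_0}\Big)^{Q-1},
\]
hence $\limsup_k\|u_k\|^{\frac{Q}{Q-1}}<\frac12\,\frac{\alpha_{\vartheta,Q}}{\alpha_0}$. This is stronger than \eqref{eq6.3}.

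So your fallback is unnecessary. Read literally, its description of which term dominates is also reversed. If $A\le 1$ then $A^{Q/p}\le A$, so the $Q$-piece dominates; if $A>1$ the $p$-piece does.

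Compared with the paper, the Ambrosetti--Rabinowitz estimate is the same; the paper uses $I(u_k)-\frac1\mu\langle I'(u_k),u_k\rangle$. Your single bound $\|D_Hu_k\|_p^p+\|D_Hu_k\|_Q^Q\ge 2^{-p}\|u_k\|^p$, valid once $\|u_k\|$ is large, replaces the paper's three unboundedness cases.

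For \eqref{eq6.3} the paper works with $\|u_k\|^{\frac{Q}{Q-1}}$ and proceeds as follows:
\begin{itemize}
\item it applies $(a+b)^{\frac{1}{Q-1}}\le a^{\frac{1}{Q-1}}+b^{\frac{1}{Q-1}}$;
\item it enlarges the coefficient of the $c^{\frac{1}{Q-1}}$ term;
\item it splits into $c\le 1$ and $c\ge 1$, using the first entry of the minimum in the first case and the second entry in the second.
\end{itemize}
Your route uses both entries at once, one per piece. This is legitimate because $c$ lies below the minimum. It avoids the case distinction and shows that $c_0$ leaves room to spare.
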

\begin{proof}
Let $c\in(0,c_0)$ and $\{u_k\}_k\subset W$ be a $(PS)_c$ sequence for $I$. Now, by using $(f_3)$, one has
\begin{align}\label{eq6.4}
        c+o_k(1)+o_k(1)\|u_k\|\geq \bigg(\frac{1}{Q}-\frac{1}{\mu}\bigg)\Big(\|D_H u_k\|_p^p+\|D_H u_k\|_Q^Q\Big)\quad\text{as}\quad k\to\infty.
\end{align}
It is obvious that $\{u_k\}_k$ is bounded in $W$. Indeed, if not, assume that $\{u_k\}_k$ is unbounded in $W$. Hence, we have three possibilities as follows.

\text{Case-1:}
Assume that $\|D_H u_k\|_p\to \infty$ and $\|D_H u_k\|_Q\to\infty$ as $k\to\infty$. By using $1<p<Q$, we have  
\begin{align*}
\|D_H u_k\|_Q^Q\geq\|D_H u_k\|_Q^p>1\quad\text{for large}\quad k.   
\end{align*}
It follows from \eqref{eq1.99} and \eqref{eq6.4} that
\begin{align*}
  c+o_k(1)+o_k(1)\|u_k\|\geq 2^{1-p}\bigg(\frac{1}{Q}-\frac{1}{\mu}\bigg)\|u_k\|^p \quad\text{as}\quad k\to\infty. 
\end{align*}
Dividing $\|u_k\|^p$ on the both the sides and letting $k\to\infty$, one has $0\geq 2^{1-p}\big(\frac{1}{Q}-\frac{1}{\mu}\big)>0$, which is a contradiction.

\text{Case-2:}
Assume that $\|D_H u_k\|_p\to \infty$ as $k\to\infty$ and $\|D_H u_k\|_Q$ is bounded. Thus, we obtain from \eqref{eq1.99} and \eqref{eq6.4} that
\begin{align*}
 c+o_k(1)+o_k(1)\|u_k\|_W\geq \bigg(\frac{1}{Q}-\frac{1}{\mu}\bigg)\|D_H u_k\|_p^p\quad\text{as}\quad k\to\infty.  
\end{align*}
Dividing $\|D_H u_k\|_p^p$ on the both sides and letting $k\to\infty$, one has $0\geq \big(\frac{1}{Q}-\frac{1}{\mu}\big)>0$, which is again a contradiction.

\text{Case-3:}
Assume that $\|D_H u_k\|_Q\to\infty$ as $k\to\infty$ and $\|D_H u_k\|_p$ is bounded, then by arguing similarly as in Case-$2$, we arrive at contradiction.

Thus, one sees that $\{u_k\}_k\subset W$ is a bounded sequence in $W$. In addition, due to \eqref{eq6.4}, we have
\begin{align}\label{eq6.5}
\limsup_{k\to\infty} \|D_H u_k\|^Q_Q\leq \bigg(\frac{Q\mu}{\mu-Q}\bigg)c\quad\text{and}\quad \limsup_{k\to\infty} \|D_H u_k\|_p^Q\leq \bigg[\bigg(\frac{Q\mu}{\mu-Q}\bigg)c\bigg]^{\frac{Q}{p}}.   
\end{align}
By using \eqref{eq6.5}, one has 
\begin{align*}
   \limsup_{k\to\infty}\|u_k\|^{\frac{Q}{Q-1}}\leq \Bigg[\bigg(\frac{Q\mu}{\mu-Q}\bigg)c\Bigg]^{\frac{1}{Q-1}}+\Bigg[\bigg(\frac{Q\mu}{\mu-Q}\bigg)c\Bigg]^{\frac{Q}{(Q-1)p}},  
\end{align*}
where we have used  $$(a+b)^\eta\leq a^\eta+b^\eta,~~\forall~a,b\geq 0;~\eta\in(0, 1].$$
In virtue of $\frac{Q\mu}{\mu-Q}>1$ and $\frac{1}{Q-1}<\frac{Q}{(Q-1)p}$, we obtain from the previous inequality that
\begin{align}\label{eq6.6}
  \limsup_{k\to\infty}\|u_k\|^{\frac{Q}{Q-1}}\leq \Bigg[2\bigg(\frac{Q\mu}{\mu-Q}\bigg)^{\frac{Q}{p}}\Bigg]^{\frac{1}{Q-1}} \Big(c^{\frac{1}{Q-1}}+c^{\frac{Q}{(Q-1)p}}\Big).
\end{align}
Pick $c\in(0,1]$. It follows that $c^{\frac{1}{Q-1}}+c^{\frac{Q}{(Q-1)p}}\leq 2c^{\frac{1}{Q-1}} $ and thus\textcolor{blue}{,} we obtain from \eqref{eq6.6} that
\begin{align}\label{eq6.7}
  \limsup_{k\to\infty}\|u_k\|^{\frac{Q}{Q-1}}\leq \Bigg[2^Q\bigg(\frac{Q\mu}{\mu-Q}\bigg)^{\frac{Q}{p}} c\Bigg]^{\frac{1}{Q-1}} .
\end{align}
For $c\geq 1$, we deduce that $c^{\frac{1}{Q-1}}+c^{\frac{Q}{(Q-1)p}}\leq 2c^{\frac{Q}{(Q-1)p}} $. It follows from \eqref{eq6.6} that
\begin{align}\label{eq6.8}
  \limsup_{k\to\infty}\|u_k\|^{\frac{Q}{Q-1}}\leq \Bigg[2^{Q}\bigg(\frac{Q\mu}{\mu-Q}\bigg)^{\frac{Q}{p}} c^{\frac{Q}{p}}\Bigg]^{\frac{1}{Q-1}} .
\end{align}
Due to \eqref{eq6.7} and \eqref{eq6.8}, one can easily obtain \eqref{eq6.3}. This finishes the proof.
\end{proof}
\begin{lemma}\label{lem5.6}
 Suppose the hypotheses $(f_1)-(f_3)$, $(f_5)$ and $(f_6)$ are satisfied. Let $\{u_k\}_k\subset W$ be an arbitrary $(PS)_c$ sequence for $I$ with $0<c<c_0$, where $c_0$ is given in Lemma \ref{lem6.4}. Then, there exists a subsequence of $\{u_k\}_k$ still denoted by itself and $u\in W$ such that $D_H u_k\to D_H u$ a.e. in $\mathbb{H}^n$ as $k\to\infty$. Moreover, there holds\textcolor{blue}{:}
 \begin{align}\label{eq6.9}
     |D_H u_k|_H^{m-2}D_H u_k \rightharpoonup |D_H u|^{m-2}D_H u\quad\text{in}\quad L^{\frac{m}{m-1}}(\mathbb{H}^n,\mathbb{R}^{2n})\quad\text{as}\quad k\to\infty\quad\text{for}\quad m\in\{p,Q\}.
 \end{align}
\end{lemma}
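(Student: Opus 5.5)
The plan is a Boccardo--Murat type truncation argument, localized on Kor\'anyi balls. The exponential nonlinearity is controlled by the bound \eqref{eq6.3} together with Theorem \ref{thm1.1}.

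\textbf{Step 1 (limit and local convergence).} By Lemma \ref{lem5.5}, the sequence $\{u_k\}_k$ is bounded in $W$. Passing to a subsequence, $u_k\rightharpoonup u$ in $W$. By Lemma \ref{lem2.1}, $u_k\to u$ in $L^\tau(B_R)$ for every $R>0$ and $\tau\ge 1$, and $u_k\to u$ a.e. in $\mathbb{H}^n$.

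\textbf{Step 2 (uniform $L^1$ bound on the nonlinearity).} This is the main obstacle. I want
\[
\sup_k\int_{\mathbb{H}^n}\frac{|f(\xi,u_k)|\,|\phi|}{r(\xi)^\vartheta}\,\mathrm{d}\xi<+\infty
\]
for every fixed bounded $\phi\in W$ with compact support.
\begin{itemize}
\item By \eqref{eq6.3}, choose $\alpha>\alpha_0$ and $\sigma'>1$ close to $1$ so that, for large $k$, $\alpha\sigma'\|u_k\|^{\frac{Q}{Q-1}}\le \alpha_{\vartheta,Q}$.
\item Use \eqref{eq1.1} to bound $|f(\xi,u_k)|$ by $\varepsilon|u_k|^{q-1}+C_\varepsilon|u_k|^{\ell-1}\Phi_{\alpha,j_0}(u_k)$.
\item Apply H\"older's inequality with exponents $\sigma,\sigma'$.
\item Use Lemma \ref{lem2.2} to pass the power $\sigma'$ inside $\Phi_{\alpha,j_0}$.
\item Apply Theorem \ref{thm1.1} to $u_k/\|u_k\|$.
\item Apply Theorem \ref{thm1.4} to the polynomial factors.
\end{itemize}
This gives a bound $C(\|\phi\|_\infty)$ independent of $k$.

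\textbf{Step 3 (testing the equation).} Fix $R>0$ and a cut-off $\phi_R\in C_0^\infty(B_{2R},[0,1])$ with $\phi_R\equiv1$ on $B_R$. For $\varepsilon>0$ let $T_\varepsilon(s)=\max\{-\varepsilon,\min\{s,\varepsilon\}\}$, and test $I'(u_k)$ with $\phi_R T_\varepsilon(u_k-u)\in W$. This test function is bounded in $W$, so $\langle I'(u_k),\phi_RT_\varepsilon(u_k-u)\rangle=o_k(1)$. For $m\in\{p,Q\}$ set $a_m(\eta)=|\eta|_H^{m-2}\eta$.
\begin{itemize}
\item By Step 2, the nonlinear term is at most $C\varepsilon$.
\item The terms containing $T_\varepsilon(u_k-u)D_H\phi_R$ tend to $0$, by H\"older's inequality and the strong $L^m(B_{2R})$ convergence of Step 1.
\item The terms $\int a_m(D_Hu)\cdot D_H T_\varepsilon(u_k-u)\,\phi_R$ tend to $0$ by weak convergence, because $D_HT_\varepsilon(u_k-u)=\chi_{\{|u_k-u|<\varepsilon\}}D_H(u_k-u)$ converges weakly to $0$ in $L^m$.
\end{itemize}
Writing $e_k=\sum_{m\in\{p,Q\}}(a_m(D_Hu_k)-a_m(D_Hu))\cdot(D_Hu_k-D_Hu)\ge0$, these estimates give
\[
\limsup_{k}\int_{B_R\cap\{|u_k-u|<\varepsilon\}}e_k\,\mathrm{d}\xi\le C\varepsilon .
\]

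\textbf{Step 4 (from the estimate to a.e. convergence).} Fix $\theta\in(0,1)$ and split $\int_{B_R}e_k^\theta$ over the sets $\{|u_k-u|<\varepsilon\}$ and $\{|u_k-u|\ge\varepsilon\}$.
\begin{itemize}
\item On the first set, H\"older's inequality gives a bound $(C\varepsilon)^\theta|B_R|^{1-\theta}$.
\item On the second set, H\"older's inequality, the boundedness of $e_k$ in $L^1(B_R)$, and $|\{|u_k-u|\ge\varepsilon\}\cap B_R|\to0$ make the contribution vanish.
\end{itemize}
Letting $\varepsilon\to0$ yields $e_k^\theta\to0$ in $L^1(B_R)$, hence $e_k\to0$ a.e. on $B_R$ along a subsequence. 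Strict monotonicity of $a_p$ and $a_Q$ on $\mathbb{R}^{2n}$ then forces $D_Hu_k\to D_Hu$ a.e. on $B_R$. A diagonal argument over $R\in\mathbb{N}$ gives a.e. convergence on all of $\mathbb{H}^n$.

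\textbf{Step 5 (weak convergence of the fluxes, \eqref{eq6.9}).} For $m\in\{p,Q\}$, the sequence $\{a_m(D_Hu_k)\}_k$ is bounded in $L^{\frac{m}{m-1}}(\mathbb{H}^n,\mathbb{R}^{2n})$ and converges a.e. to $a_m(D_Hu)$. The standard lemma that a bounded sequence in a reflexive $L^s$ space ($1<s<\infty$) converging a.e. converges weakly to its a.e. limit then yields \eqref{eq6.9}.
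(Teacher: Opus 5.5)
Your argument is correct, but it follows a different route from the paper.

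\textbf{The paper's route.} The paper tests with $(u_k-u)\psi$, without truncation, and uses $\langle I'(u_k)-I'(u),(u_k-u)\psi\rangle=o_k(1)$. It then proves that the full nonlinear term $\int_{B_{2R}}(|f(\xi,u_k)|+|f(\xi,u)|)|u_k-u|\,r(\xi)^{-\vartheta}\,\mathrm{d}\xi$ tends to zero:
\begin{itemize}
\item the exponential factor is controlled by \eqref{eq1.1}, H\"older's inequality, Lemma \ref{lem2.2} and the uniform bound of Theorem \ref{thm1.1}, which is available because of \eqref{eq6.3};
\item the compact weighted embedding of Theorem \ref{thm1.4} gives $\|u_k-u\|_{\theta,\vartheta}\to0$.
\end{itemize}
Simon's inequality $(a_Q(x)-a_Q(y))\cdot(x-y)\ge c|x-y|^Q$, valid since $Q\ge4$, then yields $D_Hu_k\to D_Hu$ strongly in $L^Q(B_R)$. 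This is stronger than what the lemma claims.

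\textbf{Your route.} The Boccardo--Murat truncation asks much less of the nonlinearity. You need only a uniform bound on $\int|f(\xi,u_k)|\phi\,r(\xi)^{-\vartheta}\,\mathrm{d}\xi$, with no convergence of that term and no Simon-type inequality. The price is that you obtain only a.e. convergence, through $e_k^\theta\to0$ in $L^1(B_R)$ and strict monotonicity, which is exactly what is asked.

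\textbf{Your Step 2 needs neither \eqref{eq6.3} nor Theorem \ref{thm1.1}.} Since $f\ge0$ vanishes for $s\le0$ and is bounded on $\mathbb{H}^n\times[-1,1]$ by \eqref{eq1.1}, splitting over $\{u_k\le1\}$ and $\{u_k>1\}$ gives $\int_{B_{2R}}|f(\xi,u_k)|\,r(\xi)^{-\vartheta}\,\mathrm{d}\xi\le C\int_{B_{2R}}r(\xi)^{-\vartheta}\,\mathrm{d}\xi+\int_{\mathbb{H}^n}f(\xi,u_k)u_k\,r(\xi)^{-\vartheta}\,\mathrm{d}\xi$. The last integral is bounded for any bounded $(PS)$ sequence, by testing $I'(u_k)$ with $u_k$ (cf.\ \eqref{eq5.18}). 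Boundedness of $(PS)$ sequences uses only $(f_3)$. Hence your route would give a.e. convergence of the gradients without the energy restriction $c<c_0$.

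\textbf{A small correction.} In Step 2 the factor $|u_k|^{q-1}$ may have exponent below $p^\ast$, so Theorem \ref{thm1.4} does not apply to it literally. Use the compact support of $\phi$ and $r^{-\vartheta}\in L^1_{\mathrm{loc}}$ instead, for example via $|u_k|^{q-1}\le1+|u_k|^q$.
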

\begin{proof}
Suppose that $c\in(0,c_0)$ and $\{u_k\}_k\subset W$ is a $(PS)_c$ sequence for $I$. Due to Lemma \ref{lem5.5}, we deduce that $\{u_k\}_k$ is a bounded sequence in $W$ and \eqref{eq6.3} holds. It follows that up to a subsequence of $\{u_k\}_k$ still denoted by itself and $u\in W$ such that $u_k\rightharpoonup u$ in $W$ as $k\to\infty$. In addition, we obtain from Lemma \ref{lem2.1} and Theorem \ref{thm1.4} that
\begin{align}\label{eq6.10}
    \begin{cases}
        u_k\to u\quad\text{in}\quad L^\tau (B_R)\quad{\text{for all}}\quad R>0\quad\text{and}\quad \tau\in [1,+\infty);\\
         u_k\to u\quad\text{in}\quad L^\theta(\mathbb{H}^n,r(\xi)^{-\vartheta}\mathrm{d}\xi)\quad{\text{for all}}\quad \theta\in [p^\ast,+\infty);\\
         u_k\to u\quad\text{a.e. in}\quad \mathbb{H}^n\quad\text{as}\quad k\to\infty.
    \end{cases}
\end{align}
Take $R>0$ and $\psi\in C^\infty_0(\mathbb{H}^n)$ such that $0\leq\psi\leq 1$ in $\mathbb{H}^n$, $\psi\equiv 1$ in $B_R$ and $\psi\equiv 0$ in $\mathbb{H}^n\setminus B_{2R}$. Further, since $u_k\rightharpoonup u$ in $W$ as $k\to\infty$ and $I^\prime(u_k)\to 0$ in $W^\ast$ as $k\to\infty$, we get
\begin{align}\label{eq6.11}
    \langle{I^\prime(u_k)-I^\prime(u),(u_k-u)\psi}\rangle=o_k(1)\quad\text{as}\quad k\to\infty.
\end{align}
For each $k\in\mathbb{N}$ and $m\in\{p,Q\}$, we obtain from convexity that
\begin{align*}
    \mathcal{L}^m_k(u_k,u):=\big(|D_H u_k|_H^{m-2} D_H u_k-|D_H u|_H^{m-2} D_H u, D_H u_k-D_H u\big)_H\geq 0\quad\text{a.e. in}\quad \mathbb{H}^n.
\end{align*}
Due to the well known Simon's inequality \cite{Simon-1978} and \eqref{eq6.11}, we infer that there exists $c_Q>0$ such that
\begin{equation}\label{eq6.12}
\begin{aligned}		
c^{-1}_{Q} \int_{B_R} &|D_H u_k-D_H u|_H^{Q}\,\mathrm{d}\xi \\ &\leq    \int_{B_R} \mathcal{L}^{Q}_k(u_k,u) \,\mathrm{d}\xi\leq \displaystyle\sum_{m\in\{p,Q\}} \bigg(\int_{B_R}\mathcal{L}^m_k(u_k,u) \,\mathrm{d}\xi\bigg) \\
&\leq \displaystyle\sum_{m\in\{p,Q\}} \bigg(\int_{\mathbb{H}^n}\mathcal{L}^m_k(u_k,u) \psi\,\mathrm{d}\xi\bigg)\\ 
&= -\sum_{m\in\{p,Q\}} \Bigg(\int_{\mathbb{H}^n}\big(|D_H u_k|_H^{m-2}D_H u_k-|D_H u|_H^{m-2}D_H u,D_H\psi)_H ( u_k-u)\,\mathrm{d}\xi \Bigg) \\ 
&\qquad +\int_{\mathbb{H}^n}\frac{(f(\xi,u_k)-f(\xi,u))(u_k-u)\psi}{r(\xi)^\vartheta}\,\mathrm{d}\xi{+o_k(1)} \quad\text{as}\quad k\to\infty.
\end{aligned}
\end{equation}
By applying H\"older's inequality and \eqref{eq6.10}, we obtain for $m\in\{p,Q\}$ that
\begin{align*}
    \bigg|\int_{\mathbb{H}^n}\big(|D_H u_k|_H^{m-2}D_H u_k-&|D_H u|_H^{m-2}D_H u,D_H\psi)_H ( u_k-u)\,\mathrm{d}\xi  \bigg|
		\\ & \leq \|D_H \psi\|_\infty\big(\|D_H u_k\|^{m-1}_m+\|D_H u\|^{m-1}_m\big) \bigg(\int_{B_{2R}}|u_k-u|^m\,\mathrm{d}\xi\bigg)^{\frac{1}{m}}\\
        &\lesssim \bigg(\int_{B_{2R}}|u_k-u|^m\,\mathrm{d}\xi\bigg)^{\frac{1}{m}}\to 0\quad\text{as }k\to\infty.
\end{align*}
It follows that for $m\in\{p,Q\}$, one has
\begin{align}\label{eq6.13}
  \lim_{k\to\infty} \int_{\mathbb{H}^n}\big(|D_H u_k|_H^{m-2}D_H u_k-|D_H u|_H^{m-2}D_H u,D_H\psi)_H ( u_k-u)\,\mathrm{d}\xi=0. 
\end{align}
Define
\begin{align*}
    J_1:=\varepsilon\int_{B_{2R}}\frac{(|u_k|^{q-1}+|u|^{q-1})|u
    _k-u|}{r(\xi)^\vartheta}\,\mathrm{d}\xi
\end{align*}
and \begin{align*}
    J_2:=C_\varepsilon\int_{B_{2R}}\frac{\big(|u_k|^{\ell-1} \Phi_{\alpha,j_0}(u_k)+|u|^{\ell-1} \Phi_{\alpha,j_0}(u)\big)|u
    _k-u|}{r(\xi)^\vartheta}\,\mathrm{d}\xi.
\end{align*}
Let $1<\sigma<\frac{Q}{\vartheta}$ and $\varrho\geq p^\ast$ be such that $\frac{1}{\zeta}+\frac{1}{\zeta^\prime}=1$ for $\zeta\in\{\sigma,\varrho\}$. By employing H\"older's inequality and \eqref{eq6.10}, one has
\begin{align*}
J_1&\leq \varepsilon\Bigg[\bigg(\int_{B_{2R}}\frac{|u_k|^{(q-1)\varrho^\prime}}{r(\xi)^\vartheta}\,\mathrm{d}\xi\bigg)^{\frac{1}{\varrho^\prime}}+\bigg(\int_{B_{2R}}\frac{|u|^{(q-1)\varrho^\prime}}{r(\xi)^\vartheta}\,\mathrm{d}\xi\bigg)^{\frac{1}{\varrho^\prime}}\Bigg]\bigg(\int_{B_{2R}}\frac{|u
_k-u|^\varrho}{r(\xi)^\vartheta}\,\mathrm{d}\xi\bigg)^{\frac{1}{\varrho}}\\
&\leq \varepsilon\Big(\|u_k\|^{q-1}_{L^{(q-1)\varrho^\prime \sigma^\prime}(B_{2R})}+\|u\|^{q-1}_{L^{(q-1)\varrho^\prime \sigma^\prime}(B_{2R})}\Big)\bigg(\int_{B_{2R}}\frac{1}{r(\xi)^{\vartheta \sigma}}\,\mathrm{d}\xi\bigg)^{\frac{1}{\varrho^\prime \sigma}}\bigg(\int_{B_{2R}}\frac{|u
_k-u|^\varrho}{r(\xi)^\vartheta}\,\mathrm{d}\xi\bigg)^{\frac{1}{\varrho}}\\
& \lesssim \bigg(\int_{\mathbb{H}^n}\frac{|u
_k-u|^\varrho}{r(\xi)^\vartheta}\,\mathrm{d}\xi\bigg)^{\frac{1}{\varrho}}\to 0\quad\text{as}\quad k\to\infty.
\end{align*}
In virtue of \eqref{eq6.3}, passing if necessary to a subsequence, still denoted by $\{u_k\}_k$, one can assume
\begin{align*}
    \sup_{k\in\mathbb{N}}\|u_k\|^{\frac{Q}{Q-1}}<\frac{\alpha_{\vartheta,Q}}{\alpha_0}.
\end{align*}
Suppose that $\eta,\eta^\prime>1$ and $\frac{1}{\eta}+\frac{1}{\eta^\prime}=1$. Now, we fix $\bar{m}\in \big(\|u_k\|^{\frac{Q}{Q-1}},\frac{\alpha_{\vartheta,Q}}{\alpha_0}\big)$, $\alpha>\alpha_0$ close to $\alpha_0$ and $\eta>1$ close to $1$ such that  $\alpha \eta \bar{m}<\alpha_{\vartheta,Q}$. Hence, we obtain from H\"older's inequality, Theorem \ref{thm1.1}, {Lemma \ref{lem2.2}},  Corollary \ref{cor3.1} and \eqref{eq6.10} that
\begin{align*}
J_2&\leq C_\varepsilon \Bigg[\bigg(\int_{\mathbb{H}^n}\frac{|u_k|^{(\ell-1)\eta^\prime}|u_k-u|^{\eta^\prime}}{r(\xi)^{\vartheta}}\,\mathrm{d}\xi\bigg)^{\frac{1}{\eta^\prime}}\bigg(\int_{\mathbb{H}^n}\frac{\Phi_{\alpha \eta,j_0}(u_k)}{r(\xi)^{\vartheta}}\,\mathrm{d}\xi\bigg)^{\frac{1}{\eta}}\\
&\quad\quad+\bigg(\int_{\mathbb{H}^n}\frac{|u|^{(\ell-1)\eta^\prime}|u_k-u|^{\eta^\prime}}{r(\xi)^{\vartheta}}\,\mathrm{d}\xi\bigg)^{\frac{1}{\eta^\prime}}\bigg(\int_{\mathbb{H}^n}\frac{\Phi_{\alpha \eta,j_0}(u)}{r(\xi)^{\vartheta}}\,\mathrm{d}\xi\bigg)^{\frac{1}{\eta}}\Bigg]\\
&\lesssim \Bigg[\bigg(\int_{\mathbb{H}^n}\frac{|u_k|^{\ell \eta^\prime}}{r(\xi)^{\vartheta}}\,\mathrm{d}\xi\bigg)^{\frac{\ell-1}{\ell \eta^\prime}} \bigg(\int_{\mathbb{H}^n}\frac{\Phi_{\alpha_{\vartheta,Q},j_0}(u_k/\|u_k\|)}{r(\xi)^{\vartheta}}\,\mathrm{d}\xi\bigg)^{\frac{1}{\eta}}\\
&\quad\quad+\bigg(\int_{\mathbb{H}^n}\frac{|u|^{\ell \eta^\prime}}{r(\xi)^{\vartheta}}\,\mathrm{d}\xi\bigg)^{\frac{\ell-1}{\ell \eta^\prime}} \bigg(\int_{\mathbb{H}^n}\frac{\Phi_{\alpha \eta,j_0}(u)}{r(\xi)^{\vartheta}}\,\mathrm{d}\xi\bigg)^{\frac{1}{\eta}}\Bigg]\bigg(\int_{\mathbb{H}^n}\frac{|u_k-u|^{\ell \eta^\prime}}{r(\xi)^{\vartheta}}\,\mathrm{d}\xi\bigg)^{\frac{1}{\ell \eta^\prime}}\\
&\lesssim \bigg(\int_{\mathbb{H}^n}\frac{|u_k-u|^{\ell \eta^\prime}}{r(\xi)^{\vartheta}}\,\mathrm{d}\xi\bigg)^{\frac{1}{\ell \eta^\prime}}\to 0\quad\text{as}\quad k\to\infty.
\end{align*}
Moreover, by using \eqref{eq1.1}, we have
\begin{align*}
 \bigg|\int_{\mathbb{H}^n}\frac{(f(\xi,u_k)-f(\xi,u))(u_k-u)\psi}{r(\xi)^\vartheta}\,\mathrm{d}\xi\bigg|&\leq \int_{B_{2R}}\cfrac{(|f(\xi,u_k)|+|f(\xi,u)|)|u_k-u|}{r(\xi)^\vartheta}\, \mathrm{d}\xi \\
 &\leq J_1+J_2\to 0\quad\text{as}\quad k\to\infty.   
\end{align*}
It follows that
\begin{equation}\label{eq6.14}
  \lim_{k\to\infty} \int_{\mathbb{H}^n}\frac{(f(\xi,u_k)-f(\xi,u))(u_k-u)\psi}{r(\xi)^\vartheta}\,\mathrm{d}\xi=0.  
\end{equation}
Passing $k\to\infty$ in \eqref{eq6.12} and using \eqref{eq6.13} and \eqref{eq6.14}, one sees that $D_H u_k\to D_H u$ in $L^{Q}(B_{R})$ as $k\to\infty$ for all $R>0$. Consequently, up to a subsequence still denoted by itself such that  $D_H u_k\to D_H u$ a.e. in $\mathbb{H}^n$ as $k\to\infty$. This directly implies that $|D_H u_k|_H^{m-2}D_H u_k \to |D_H u|^{m-2}D_H u$ a.e. in $\mathbb{H}^n$ as $k\to\infty$ for $m\in\{p,Q\}$. In conclusion, since $\{|D_H u_k|_H^{m-2}D_H u_k\}_k$ is bounded in $L^{\frac{m}{m-1}}(\mathbb{H}^n,\mathbb{R}^{2n})$ for $m\in\{p,Q\}$, we deduce that \eqref{eq6.9} holds.
\end{proof}
\begin{proposition}\label{prop5.7}
    Suppose that the hypotheses $(f_1)-(f_5)$ are satisfied. Then, the functional $I$ satisfies $(PS)_c$ condition for any $c\in(0,c_0)$, where $c_0$ is given in Lemma \ref{lem6.4}.
\end{proposition}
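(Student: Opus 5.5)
Let $c\in(0,c_0)$ and let $\{u_k\}_k\subset W$ be a $(PS)_c$ sequence for $I$. By Lemma \ref{lem5.5} it is bounded and satisfies \eqref{eq6.3}. Passing to a subsequence, Lemma \ref{lem5.6} gives $u\in W$ with $u_k\rightharpoonup u$ in $W$, the convergences \eqref{eq6.10}, $D_Hu_k\to D_Hu$ a.e., and the weak convergence \eqref{eq6.9}. The plan is to show $\langle I'(u_k),u_k-u\rangle\to0$ and then obtain strong convergence from the monotone structure of the $(p,Q)$-Laplacian.

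\textbf{Step 1 (the nonlinear term).} I will show
$$\int_{\mathbb{H}^n}\frac{f(\xi,u_k)(u_k-u)}{r(\xi)^\vartheta}\,\mathrm{d}\xi\to0 .$$
This is the main obstacle. The argument is the global version of the estimate of $J_1+J_2$ in Lemma \ref{lem5.6}, without the cutoff $\psi$. First apply \eqref{eq1.1}. The polynomial part is bounded by Hölder's inequality:
$$\varepsilon\|u_k\|_{q,\vartheta}^{q-1}\|u_k-u\|_{q,\vartheta}\to0,$$
by Theorem \ref{thm1.4}, since $q>p^\ast$.

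For the exponential part, fix $\bar m\in\big(\sup_k\|u_k\|^{Q/(Q-1)},\,\alpha_{\vartheta,Q}/\alpha_0\big)$, which is possible by \eqref{eq6.3} after passing to a subsequence. Then choose $\alpha>\alpha_0$ and $\eta>1$ close to $1$ so that $\alpha\eta\bar m<\alpha_{\vartheta,Q}$. Hölder's inequality with exponents $\eta,\eta'$, together with Lemma \ref{lem2.2} and Theorem \ref{thm1.1} applied to $u_k/\|u_k\|$, bounds
$$\int_{\mathbb{H}^n}\frac{\Phi_{\alpha\eta,j_0}(u_k)}{r(\xi)^\vartheta}\,\mathrm{d}\xi$$
uniformly in $k$. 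The remaining factor
$$\bigg(\int_{\mathbb{H}^n}\frac{|u_k|^{(\ell-1)\eta'}|u_k-u|^{\eta'}}{r(\xi)^\vartheta}\,\mathrm{d}\xi\bigg)^{1/\eta'}$$
is split by Hölder's inequality again into $\|u_k\|_{\ell\eta',\vartheta}^{\ell-1}\|u_k-u\|_{\ell\eta',\vartheta}$. This tends to $0$ by Theorem \ref{thm1.4}, since $\ell\eta'>p^\ast$.

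The same estimate, applied to the fixed function $u$ and using Corollary \ref{cor3.1}, gives
$$\int_{\mathbb{H}^n}\frac{f(\xi,u)(u_k-u)}{r(\xi)^\vartheta}\,\mathrm{d}\xi\to0 .$$
This step uses \eqref{eq6.3} essentially, which is the reason for restricting to $c<c_0$.

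\textbf{Step 2 (weak limit is critical).} From \eqref{eq6.9} and the computation in Step 1, $I'(u)=0$. More simply, $\langle I'(u),u_k-u\rangle\to0$ because $u_k\rightharpoonup u$ and $I'(u)\in W^\ast$. Hence
$$\langle I'(u_k)-I'(u),u_k-u\rangle\to0 .$$
Combined with Step 1, this gives
$$\sum_{m\in\{p,Q\}}\int_{\mathbb{H}^n}\mathcal{L}^m_k(u_k,u)\,\mathrm{d}\xi\to0 ,$$
where each integrand is nonnegative.

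\textbf{Step 3 (strong convergence).} By Simon's inequality, as in \eqref{eq6.12}, the $m=Q$ term controls $\|D_Hu_k-D_Hu\|_Q^Q$. For $m=p$, if $p\ge2$ the same inequality gives $\|D_Hu_k-D_Hu\|_p^p\to0$. If $1<p<2$, I will use the inequality
$$|a-b|^p\lesssim \big(\mathcal{L}^p\big)^{p/2}\,(|a|+|b|)^{(2-p)p/2}$$
together with Hölder's inequality and the boundedness of $\|D_Hu_k\|_p$. Therefore $\|u_k-u\|\to0$ in $W$.

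An alternative route is the uniform convexity of $W$, noted after \eqref{eq1.99}. Testing with $u_k$ and $u$ shows
$$\|D_Hu_k\|_m^m\to\|D_Hu\|_m^m\quad\text{for } m\in\{p,Q\},$$
and then Brezis--Lieb with the a.e. convergence of $D_Hu_k$ yields the conclusion.
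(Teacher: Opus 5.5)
Your proof is correct, and it takes a shorter route than the paper's.

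\textbf{What the paper does.} It first proves $F(\xi,u_k)r(\xi)^{-\vartheta}\to F(\xi,u)r(\xi)^{-\vartheta}$ in $L^1(\mathbb{H}^n)$, which is where $(f_4)$ enters, and deduces that the weak limit $u$ is a weak solution. It then splits into cases:
\begin{itemize}
\item $u=0$ is ruled out by showing $\int_{\mathbb{H}^n}f(\xi,u_k)u_k\,r(\xi)^{-\vartheta}\,\mathrm{d}\xi\to0$ via Theorem~\ref{thm1.1}.
\item For $u\neq0$ it argues against $\|u\|<\lim_k\|u_k\|$. It normalizes $w_k=u_k/\|u_k\|\rightharpoonup w_0$ and invokes the concentration-compactness principle, Theorem~\ref{thm1.2}, with an exponent $\bar m<\alpha_{\vartheta,Q}(1-\|w_0\|^Q)^{-1/(Q-1)}$. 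Only then does it run the same H\"older/compact-embedding estimate and Simon's inequality that you use.
\end{itemize}

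\textbf{Why your shortcut works.} You observe that \eqref{eq6.3} already puts $\alpha\eta\|u_k\|^{Q/(Q-1)}$ below $\alpha_{\vartheta,Q}$. So Theorem~\ref{thm1.1} alone bounds $\int_{\mathbb{H}^n}\Phi_{\alpha\eta,j_0}(u_k)r(\xi)^{-\vartheta}\,\mathrm{d}\xi$ uniformly in every case. Since $(1-\|w_0\|^Q)^{-1/(Q-1)}\ge1$, and the paper itself derives its threshold from Lemma~\ref{lem5.5}, the case analysis and the appeal to Theorem~\ref{thm1.2} are redundant here.

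\textbf{What each route buys.} Your version needs only \eqref{eq1.1}, Lemma~\ref{lem5.5}, Theorems~\ref{thm1.1} and~\ref{thm1.4}, and Corollary~\ref{cor3.1}; in particular it never uses $(f_4)$. You also treat the range $1<p<2$ in the Simon step explicitly, which the paper leaves as ``not difficult''. The paper obtains along the way that $u$ is a nontrivial critical point. In your approach this follows afterwards from strong convergence, continuity of $I'$, and $c>0$. The concentration-compactness route would only pay off if you had the weaker bound $\limsup_k\|u_k\|^{Q/(Q-1)}<\frac{\alpha_{\vartheta,Q}}{\alpha_0}(1-\|w_0\|^Q)^{-1/(Q-1)}$ without \eqref{eq6.3}, as happens at higher energy levels.

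\textbf{A minor remark.} Your main route needs only reflexivity, Lemma~\ref{lem2.1} and Theorem~\ref{thm1.4} to produce $u$ and \eqref{eq6.10}. The a.e.\ convergence of $D_Hu_k$ from Lemma~\ref{lem5.6}, which formally assumes $(f_6)$, is used only in your alternative Brezis--Lieb route.
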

\begin{proof}
Let $\{u_k\}_k\subset W$ be a $(PS)_c$ sequence for $I$ with $c\in(0,c_0)$. It directly implies that
\begin{align}\label{eq5.16}
    \frac{1}{p}\|D_H u_k\|^p_p+\frac{1}{Q}\|D_H u_k\|^Q_Q-\int_{\mathbb{H}^n}\frac{F(\xi,u_k)}{r(\xi)^\vartheta}\,\mathrm{d}\xi\to c\quad\text{as}\quad k\to\infty
\end{align}
and 
\begin{align}\label{eq5.17}
  | \langle{I^\prime(u_k),\psi}\rangle |\leq \tau_k\|\psi\|,~\forall~\psi\in W,
\end{align}
where $\tau_k\to 0$ as $k\to\infty$. Take $\psi=u_k$ in \eqref{eq5.17}, then we obtain
\begin{align}\label{eq5.18}
 \int_{\mathbb{H}^n}\frac{f(\xi,u_k)u_k}{r(\xi)^\vartheta}\,\mathrm{d}\xi-\|D_H u_k\|^p_p-\|D_H u_k\|^Q_Q\leq \tau_k\|\ u_k\|.  
\end{align}
In view of Lemma \ref{lem5.5}, one sees that $\{u_k\}_k$ is a bounded sequence in $W$ and \eqref{eq6.3} holds. Moreover, we obtain from \eqref{eq5.16} and \eqref{eq5.18} that
\begin{align}\label{eq5.19}
\int_{\mathbb{H}^n}\frac{F(\xi,u_k)}{r(\xi)^\vartheta}\,\mathrm{d}\xi\leq C\quad\text{and}\quad \int_{\mathbb{H}^n}\frac{f(\xi,u_k)u_k}{r(\xi)^\vartheta}\,\mathrm{d}\xi \leq C,  
\end{align}
where $C>0$ is a suitable constant. Observe that \eqref{eq6.10} also holds. By using  \eqref{eq1.1}, Corollary \ref{cor3.1}, Lemma \ref{lem2.1} and H\"older's inequality, we can easily deduce that $\frac{f(\xi,u)}{r(\xi)^\vartheta}\in L^1_{\textit{loc}}(\mathbb{H}^n)$. In addition, by arguing similarly as in \cite[Lemma 3.1]{de Souza-do O-2011}, one has
\begin{align}\label{eq5.20}
 \frac{f(\xi,u_k)}{r(\xi)^\vartheta}\to \frac{f(\xi,u)}{r(\xi)^\vartheta}\quad\text{in}\quad L^1_{\textit{loc}}(\mathbb{H}^n)\quad\text{as}\quad k\to\infty. 
\end{align}
Next, we claim that
\begin{align}\label{eq5.21}
  \frac{F(\xi,u_k)}{r(\xi)^\vartheta}\to \frac{F(\xi,u)}{r(\xi)^\vartheta}\quad\text{in}\quad L^1(\mathbb{H}^n)\quad\text{as}\quad k\to\infty. 
\end{align}
To prove \eqref{eq5.21}, we choose $R>0$ and write 
\begin{align*}
\int_{\mathbb{H}^n}\frac{|F(\xi,u_k)-F(\xi,u)|}{r(\xi)^\vartheta}\,\mathrm{d}\xi=\bigg(\int_{B_R}+\int_{\mathbb{H}^n\setminus B_R}\bigg)\frac{|F(\xi,u_k)-F(\xi,u)|}{r(\xi)^\vartheta}\,\mathrm{d}\xi.    
\end{align*}
By using $(f_3)$ and $(f_4)$, one can find a constant $M_0>0$ such that
\begin{align}\label{eq5.22}
    \frac{F(\xi,u_k)}{r(\xi)^\vartheta}\leq M_0 \frac{|f(\xi,u_k)|}{r(\xi)^\vartheta},~\forall~ \xi\in \mathbb{H}^n. 
\end{align}
It follows from \eqref{eq6.10}, \eqref{eq5.20}, \eqref{eq5.22} and the generalized dominated convergence theorem of Lebesgue's that
\begin{align}\label{eq5.23}
    \lim_{R\to\infty}\lim_{k\to\infty} \int_{B_R}\frac{|F(\xi,u_k)-F(\xi,u)|}{r(\xi)^\vartheta}\,\mathrm{d}\xi=0.
\end{align}
Let us write
\begin{align*}
 \int_{\mathbb{H}^n\setminus B_R}\frac{|F(\xi,u_k)-F(\xi,u)|}{r(\xi)^\vartheta}\,\mathrm{d}\xi&=\int_{\mathbb{H}^n\setminus B_R~\cap\{\xi\in\mathbb{H}^n\colon~|u_k|> M\}}\frac{|F(\xi,u_k)-F(\xi,u)|}{r(\xi)^\vartheta}\,\mathrm{d}\xi\\
 &\quad\quad+\int_{\mathbb{H}^n\setminus B_R~\cap\{\xi\in\mathbb{H}^n\colon~|u_k|\leq M\}}\frac{|F(\xi,u_k)-F(\xi,u)|}{r(\xi)^\vartheta}\,\mathrm{d}\xi.   
\end{align*}
Due to \eqref{eq5.19} and \eqref{eq5.22}, one has
\begin{align*}
\int_{\mathbb{H}^n\setminus B_R~\cap\{\xi\in\mathbb{H}^n\colon~|u_k|> M\}}\frac{|F(\xi,u_k)-F(\xi,u)|}{r(\xi)^\vartheta}\,\mathrm{d}\xi &\leq \frac{M_0}{M}\int_{\mathbb{H}^n\setminus B_R~\cap\{\xi\in\mathbb{H}^n\colon~|u_k|> M\}}\frac{|f(\xi,u_k)u_k|}{r(\xi)^\vartheta}\,\mathrm{d}\xi\\
&\quad\quad+\int_{\mathbb{H}^n\setminus B_R}\frac{F(\xi,u)}{r(\xi)^\vartheta}\,\mathrm{d}\xi\\
 &\leq \frac{CM_0}{M}+\int_{\mathbb{H}^n\setminus B_R}\frac{F(\xi,u)}{r(\xi)^\vartheta}\,\mathrm{d}\xi.
\end{align*}
The above estimate together with \eqref{eq1.2}, Corollary \ref{cor3.1}, Theorem \ref{thm1.4} and the Lebesgue dominated convergence theorem implies that
\begin{align*}
 \lim_{M\to\infty} \lim_{R\to\infty}\lim_{k\to\infty}\int_{\mathbb{H}^n\setminus B_R~\cap\{\xi\in\mathbb{H}^n\colon~|u_k|> M\}}\frac{|F(\xi,u_k)-F(\xi,u)|}{r(\xi)^\vartheta}\,\mathrm{d}\xi=0. 
\end{align*}
By using $(f_1)$, one can find a constant $C_1>0$ such that $|F(\xi,s)|\leq C_1|s|^q$ for all $|s|\leq M$. It follows from Theorem \ref{thm1.4} that
\begin{align*}
   \int_{\mathbb{H}^n\setminus B_R~\cap\{\xi\in\mathbb{H}^n\colon~|u_k|\leq M\}}\frac{|F(\xi,u_k)-F(\xi,u)|}{r(\xi)^\vartheta}\,\mathrm{d}\xi&\leq \frac{C_1}{R^{\frac{\vartheta}{2}}} \int_{\mathbb{H}^n\setminus B_R~\cap\{\xi\in\mathbb{H}^n\colon~|u_k|\leq M\}}\frac{|u_k|^q}{r(\xi)^{\frac{\vartheta}{2}}}\,\mathrm{d}\xi\\
   &\qquad+\int_{\mathbb{H}^n\setminus B_R}\frac{F(\xi,u)}{r(\xi)^\vartheta}\,\mathrm{d}\xi\\
    & \leq \frac{C_2}{R^{\frac{\vartheta}{2}}}\sup_{k\in\mathbb{N}}\|u_k\|^q+\int_{\mathbb{H}^n\setminus B_R}\frac{F(\xi,u)}{r(\xi)^\vartheta}\,\mathrm{d}\xi,
\end{align*}
where $C_2>0$ is a constant independent on $k$. Moreover, we obtain from \eqref{eq1.2}, Corollary \ref{cor3.1}, Theorem \ref{thm1.4} and the Lebesgue dominated convergence theorem that
\begin{align*}
 \lim_{M\to\infty} \lim_{R\to\infty}\lim_{k\to\infty}  \int_{\mathbb{H}^n\setminus B_R~\cap\{\xi\in\mathbb{H}^n\colon~|u_k|\leq M\}}\frac{|F(\xi,u_k)-F(\xi,u)|}{r(\xi)^\vartheta}\,\mathrm{d}\xi=0. 
\end{align*}
This yields
\begin{align}\label{eq5.24}
    \lim_{R\to\infty}\lim_{k\to\infty} \int_{\mathbb{H}^n\setminus B_R}\frac{|F(\xi,u_k)-F(\xi,u)|}{r(\xi)^\vartheta}\,\mathrm{d}\xi=0.
\end{align}
Due to \eqref{eq5.23} and \eqref{eq5.24}, one sees that \eqref{eq5.21} holds. Now, by using \eqref{eq5.17}, \eqref{eq5.20}, Lemma \ref{lem5.6} and the Lebesgue dominated convergence theorem, we get
\begin{align*}
   \int_{\mathbb{H}^n} |D_H u|_H^{p-2}(D_H u,D_H \psi)_H\,\mathrm{d}\xi+\int_{\mathbb{H}^n} |D_H u|_H^{Q-2}(D_H u,D_H \psi)_H\,\mathrm{d}\xi=\int_{\mathbb{H}^n}\frac{f(\xi,u)}{r(\xi)^\vartheta}\psi\,\mathrm{d}\xi,~\forall~ \psi\in C^\infty_0(\mathbb{H}^n).
\end{align*}
By using the density argument of $C^\infty_0(\mathbb{H}^n)$ in $W$, we deduce that $u$ is a weak solution of {the problem} \eqref{main problem}. To finish the proof, it suffices to show the compactness of the $(PS)_c$ sequence $\{u_k\}_k$ in $W$. Next, by using $c>0$, we have two situations as follows.

\text{Case-1:} $(c\neq 0,~u=0)$ In this situation, one can easily obtain from \eqref{eq5.21} that
\begin{align*}
    \lim_{k\to\infty} \int_{\mathbb{H}^n}\frac{F(\xi,u_k)}{r(\xi)^\vartheta}\,\mathrm{d}\xi= 0.
\end{align*}
Thus, we deduce by using $I(u_k)\to c$ as $k\to\infty$ that
\begin{align*}
    c+o_k(1)&\notag=\frac{1}{p}\|D_H u_k\|^p_p+\frac{1}{Q}\|D_H u_k\|^Q_Q\geq\frac{1}{Q}\big(\|D_H u_k\|^p_p+\|D_H u_k\|^Q_Q\big) \\
    &\geq \bigg(\frac{1}{Q}-\frac{1}{\mu}\bigg)\Big(\|D_H u_k\|^p_p+\|D_H u_k\|^Q_Q\Big)\quad\text{as}\quad k\to\infty.
\end{align*}
Since $c\in(0,c_0)$, we obtain from the above estimate and Lemma \ref{lem5.5} that \eqref{eq6.3} holds. In addition, up to a subsequence, still denoted by itself, one can assume 
\begin{align*}
    \sup_{k\in\mathbb{N}}\|u_k\|^{\frac{Q}{Q-1}}<\frac{\alpha_{\vartheta,Q}}{\alpha_0}.
\end{align*}
Suppose that $\bar{m}\in\big(\|u_k\|^{\frac{Q}{Q-1}},\frac{\alpha_{\vartheta,Q}}{\alpha_0}\big)$ and $\sigma$, $\sigma^\prime>1$ satisfying $\frac{1}{\sigma}+\frac{1}{\sigma^\prime}=1$. Take $\alpha>\alpha_0$ close to $\alpha_0$, $\sigma>1$ close to $1$ such that $\alpha \sigma \bar{m}< \alpha_{\vartheta,Q}$. Now, by using \eqref{eq1.1}, Theorem \ref{thm1.1}, Theorem \ref{thm1.4}, {Lemma \ref{lem2.2}} and H\"older's inequality, one has
\begin{align*}
    \bigg|\int_{\mathbb{H}^n}\frac{f(\xi,u_k)u_k}{r(\xi)^\vartheta}\,\mathrm{d}\xi\bigg|&\leq\varepsilon \int_{\mathbb{H}^n}\frac{|u_k|^q}{r(\xi)^\vartheta}\,\mathrm{d}\xi+C_\varepsilon \bigg(\int_{\mathbb{H}^n}\frac{|u_k|^{\ell\sigma^\prime}}{r(\xi)^\vartheta}\,\mathrm{d}\xi\bigg)^{\frac{1}{\sigma^\prime}}\bigg(\int_{\mathbb{H}^n}\frac{\Phi_{\alpha \sigma,j_0}(u_k)}{r(\xi)^\vartheta}\,\mathrm{d}\xi\bigg)^{\frac{1}{\sigma}}\\
    & \lesssim \int_{\mathbb{H}^n}\frac{|u_k|^q}{r(\xi)^\vartheta}\,\mathrm{d}\xi+ \bigg(\int_{\mathbb{H}^n}\frac{|u_k|^{\ell\sigma^\prime}}{r(\xi)^\vartheta}\,\mathrm{d}\xi\bigg)^{\frac{1}{\sigma^\prime}}\bigg(\int_{\mathbb{H}^n}\frac{\Phi_{\alpha_{\vartheta,Q},j_0}(u_k/\|u_k\|)}{r(\xi)^\vartheta}\,\mathrm{d}\xi\bigg)^{\frac{1}{\sigma}}\\
    & \lesssim \int_{\mathbb{H}^n}\frac{|u_k-u|^q}{r(\xi)^\vartheta}\,\mathrm{d}\xi+ \bigg(\int_{\mathbb{H}^n}\frac{|u_k-u|^{\ell\sigma^\prime}}{r(\xi)^\vartheta}\,\mathrm{d}\xi\bigg)^{\frac{1}{\sigma^\prime}}\to 0\quad\text{as}\quad k\to\infty.
\end{align*}
This implies that
\begin{align}\label{eq5.25}
    \lim_{k\to\infty} \int_{\mathbb{H}^n}\frac{f(\xi,u_k)u_k}{r(\xi)^\vartheta}\,\mathrm{d}\xi=0.
\end{align}
It follows from $\langle{I^\prime(u_k),u_k}\rangle\to 0$ as $k\to\infty$ and \eqref{eq5.25} that $u_k\to 0$ in $W$ as $k\to\infty$. Further, by using $I\in C^1(W,\mathbb{R})$, we have $c=0$, which is a contradiction. Hence, this situation is not possible.

\text{Case-2:} $(c\neq 0,~u\neq 0)$
By using $u_k\rightharpoonup u$ in $W$ as $k\to\infty$, we deduce from the lower semicontinuity of the norm that, up to a subsequence, still denoted by itself, such that $\|u\|\leq\lim_{k\to\infty}\|u_k\|$. Moreover, if $\|u\|=\lim_{k\to\infty}\|u_k\|$, then by applying \cite[Corollary A.2]{Autuori-Pucci-2013}, one sees that $u_k\to u$ in $W$ as $k\to\infty$. In conclusion, we infer that the sequence $\{u_k\}_k$ satisfies the $(PS)_c$ compactness condition and thus\textcolor{blue}{,} $u$ is a nontrivial mountain pass solution of {the problem} \eqref{main problem}, we thank to $I\in C^1(W,\mathbb{R})$.

In addition, to complete the proof, one needs to show that $\|u\|<\lim_{k\to\infty}\|u_k\|$ is not possible. Indeed, we assume by contradiction that $\|u\|<\lim_{k\to\infty}\|u_k\|$ holds. Define the sequence $\{w_k\}_k$ and $w_0$ as follows:
\begin{align*}
    w_k:=\frac{u_k}{\|u_k\|}\quad\text{and}\quad w_0:=\frac{u}{\lim_{k\to\infty}\|u_k\|}.
\end{align*}
Due to \eqref{eq6.10}, one can easily check that $\|w_k\|=1$, $w_0\neq 0$ and $w_k\rightharpoonup w_0$ in $W$ as $k\to\infty$. Further, if $\|w_0\|=1$, then by using \cite[Corollary A.2]{Autuori-Pucci-2013}, one has $u_k\to u$ in $W$ as $k\to\infty$. It contradicts $\|u\|<\lim_{k\to\infty}\|u_k\|$ and the proof is completed here. 

Now, we study the case $\|w_0\|<1$. It follows from direct calculations and Lemma \ref{lem5.5} that up to a subsequence still denoted by itself, we have
\begin{align*}
\lim_{k\to\infty}\|u_k\|^{\frac{Q}{Q-1}}<\frac{\alpha_{\vartheta,Q}}{\alpha_0}\big(1-\|w_0\|^Q\big)^{-\frac{1}{Q-1}} .   
\end{align*}
Assume that $\sigma$, $\sigma^\prime>1$ and $\frac{1}{\sigma}+\frac{1}{\sigma^\prime}=1$. From the previous inequality, one sees that there esists $\bar{m}>0$, $\alpha>\alpha_0$ close to $\alpha_0$, $\sigma>1$ close to $1$ such that
\begin{align*}
    \alpha \sigma \|u_k\|^{\frac{Q}{Q-1}}<\bar{m}<\alpha_{\vartheta,Q}\big(1-\|w_0\|^Q\big)^{-\frac{1}{Q-1}} \quad\text{for sufficiently large}\quad k.
\end{align*}
By using \eqref{eq1.1}, H\"older's inequality, Theorem \ref{thm1.2}, Theorem \ref{thm1.4}, {Lemma \ref{lem2.2}}, we obtain for sufficiently large $k$ that
\begin{align*}
    \bigg|\int_{\mathbb{H}^n}\frac{f(\xi,u_k)(u_k-u)}{r(\xi)^\vartheta}\,\mathrm{d}\xi\bigg|&\leq \varepsilon \bigg(\int_{\mathbb{H}^n}\frac{|u_k|^{q}}{r(\xi)^\vartheta}\,\mathrm{d}\xi\bigg)^{\frac{q-1}{q}}\bigg(\int_{\mathbb{H}^n}\frac{|u_k-u|^{q}}{r(\xi)^\vartheta}\,\mathrm{d}\xi\bigg)^{\frac{1}{q}}\\
    &\quad+C_\varepsilon \bigg(\int_{\mathbb{H}^n}\frac{|u_k|^{(\ell-1)\sigma^\prime}|u_k-u|^{\sigma^\prime}}{r(\xi)^\vartheta}\,\mathrm{d}\xi\bigg)^{\frac{1}{\sigma^\prime}}\bigg(\int_{\mathbb{H}^n}\frac{\Phi_{\alpha \sigma,j_0}(u_k)}{r(\xi)^\vartheta}\,\mathrm{d}\xi\bigg)^{\frac{1}{\sigma}}\\
    & \lesssim \bigg(\int_{\mathbb{H}^n}\frac{|u_k|^{q}}{r(\xi)^\vartheta}\,\mathrm{d}\xi\bigg)^{\frac{q-1}{q}}\bigg(\int_{\mathbb{H}^n}\frac{|u_k-u|^{q}}{r(\xi)^\vartheta}\,\mathrm{d}\xi\bigg)^{\frac{1}{q}}+\bigg(\int_{\mathbb{H}^n}\frac{|u_k|^{\ell \sigma^\prime}}{r(\xi)^\vartheta}\,\mathrm{d}\xi\bigg)^{\frac{\ell-1}{\ell \sigma^\prime}}\\
    &\quad\times\bigg(\int_{\mathbb{H}^n}\frac{|u_k-u|^{\ell \sigma^\prime}}{r(\xi)^\vartheta}\,\mathrm{d}\xi\bigg)^{\frac{1}{\ell \sigma^\prime}} \bigg(\int_{\mathbb{H}^n}\frac{\Phi_{\bar{m},j_0}(w_k)}{r(\xi)^\vartheta}\,\mathrm{d}\xi\bigg)^{\frac{1}{\sigma}} \\
    &\lesssim \bigg(\int_{\mathbb{H}^n}\frac{|u_k-u|^{q}}{r(\xi)^\vartheta}\,\mathrm{d}\xi\bigg)^{\frac{1}{q}}+\bigg(\int_{\mathbb{H}^n}\frac{|u_k-u|^{\ell \sigma^\prime}}{r(\xi)^\vartheta}\,\mathrm{d}\xi\bigg)^{\frac{1}{\ell \sigma^\prime}}=o_k(1)\quad\text{as}\quad k\to\infty.
\end{align*}
This shows that
\begin{align*}
\lim_{k\to\infty}\int_{\mathbb{H}^n}\frac{f(\xi,u_k)(u_k-u)}{r(\xi)^\vartheta}\,\mathrm{d}\xi=0.   
\end{align*}
From the above estimate and $\langle{I^\prime(u_k),u_k-u}\rangle=o_k(1)$ as $k\to\infty$, one has 
\begin{align*}
   \int_{\mathbb{H}^n} |D_H u_k|_H^{p-2}(D_H u_k,D_H (u_k-u))_H\,\mathrm{d}\xi+\int_{\mathbb{H}^n}|D_H u_k|_H^{Q-2}(D_H u_k,D_H (u_k-u))_H\,\mathrm{d}\xi=o_k(1)~\text{as}~ k\to\infty.
\end{align*}
In addition, since $u_k\rightharpoonup u$ in $W$ as $k\to\infty$, we also have 
\begin{align*}
   \int_{\mathbb{H}^n} |D_H u|_H^{p-2}(D_H u,D_H (u_k-u))_H\,\mathrm{d}\xi+\int_{\mathbb{H}^n}|D_H u|_H^{Q-2}(D_H u,D_H (u_k-u))_H\,\mathrm{d}\xi=o_k(1)~\text{as}~ k\to\infty.
\end{align*}
It follows from convexity and the above convergences that
\begin{align}\label{eq5.26}
   \int_{\mathbb{H}^n} \big(|D_H u_k|_H^{m-2} D_H u_k-|D_H u|_H^{m-2} D_H u, D_H u_k-D_H u\big)_H\,\mathrm{d}\xi=o_k(1) \quad\text{as}\quad k\to\infty 
\end{align}
for $m\in\{p,Q\}$. Now, once more by the Simon's inequality \cite{Simon-1978} and \eqref{eq5.26}, it is not difficult to show that $D_H u_k\to D_H u$ in $L^m(\mathbb{H}^n)$ as $k\to\infty$ for $ m\in\{p,Q\}$. {Hence}, we have $u_k\to u$ in $W$ as $k\to\infty$, which contradicts the fact that $\|u\|<\lim_{k\to\infty}\|u_k\|$. This finishes the proof.
\end{proof}
Next, we are in a position to prove Theorem \ref{thm1.5}.
\begin{proof}[\bf{Proof of Theorem \ref{thm1.5}}.]
Let $\beta_0$ be given in Lemma \ref{lem6.4} and the hypothesis $(f_5)$ holds for all $\beta\geq\beta_0$. In view of Theorem \ref{thm6.2} and Proposition \ref{prop5.7}, we infer that $u$ is a critical point of $I$. Now, by using Lemma \ref{lem6.3}, one sees that $u\geq 0$ in $\mathbb{H}^n$. Additionally, we can observe that $I(u)=c>0$. It follows that $u$ is a positive solution to the {problem} \eqref{main problem}. Denote
$$\Lambda:=\inf_{u\in U} I(u)\quad\text{with}\quad U:=\{u\in W\setminus\{0\}\colon~I^\prime(u)=0\}. $$
Notice that to prove $u$ is a ground state solution to the {problem} \eqref{main problem}, it suffices to show that $c\leq\Lambda$. Define a map $g\colon(0,+\infty)\to\mathbb{R}$ by $g(s)=I(su)$ for all $s>0$ and $u\in U$. Since $I\in C^1(W,\mathbb{R})$, we deduce that $g$ is differentiable and
\begin{align}\label{eq6.26}
    g^\prime(s)=s^{p-1}\|D_H u\|^p_p+s^{Q-1}\|D_H u\|^Q_Q-\int_{\mathbb{H}^n}\frac{f(\xi,su)u}{r(\xi)^\vartheta}\,\mathrm{d}\xi.
\end{align}
Moreover, one can easily obtain from $\langle{I^\prime(u),u}\rangle=0$ that
\begin{align}\label{eq6.27}
    0=s^{Q-1}\|D_H u\|^p_p+s^{Q-1}\|D_H u\|^Q_Q-s^{Q-1}\int_{\mathbb{H}^n}\frac{f(\xi,u)u}{r(\xi)^\vartheta}\,\mathrm{d}\xi.
\end{align}
On subtracting \eqref{eq6.27} from \eqref{eq6.26}, we have
$$g^\prime(s)=\big(s^{p-1}-s^{Q-1}\big)\|D_H u\|^p_p+s^{Q-1}\int_{\mathbb{H}^n}\bigg(\frac{f(\xi,u)}{u^{Q-1}}-\frac{f(\xi,su)}{(su)^{Q-1}}\bigg)\frac{u^Q}{r(\xi)^\vartheta}\,\mathrm{d}\xi. $$
Consequently, we obtain from $(f_6)$ that
$$ g^\prime(s)>0,~\forall~s\in(0,1)\quad\text{and}\quad g^\prime(s)<0,~\forall~s\in(1,+\infty). $$
By using the elementary calculus, one sees that $1$ is the point of maximum of $g\colon(0,+\infty)\to\mathbb{R}$. It follows that
$$ I(u)=\max_{s\geq 0} I(su). $$
Let $h\colon[0,1]\to W$ be such that $h(s)=ss_0u$, where $s_0$ is assume in such a way that $I(s_0u)<0$. Hence, we have $h\in\Gamma$ and
$$c\leq \max_{s\in[0,1]} I(h(s))\leq \max_{s\geq 0} I(su)=I(u). $$
Due to the arbitrariness of $u\in U$, one sees that $c\leq \Lambda$. This completes the proof.
\end{proof}
\section*{Acknowledgements}
The authors would like to thank the anonymous referees for their careful reading and valuable comments and suggestions, which significantly enhanced the quality of the paper.
\section*{Funding}
Giovany M. Figueiredo has been partially supported by FAPESP 2023/10287-7, CNPq, and FAPDF. Deepak Kumar Mahanta sincerely thanks the Government of India for the DST INSPIRE Fellowship (Ref. No. DST/INSPIRE/03/2019/000265) and gratefully acknowledges their support. Abhishek Sarkar received funding from the DST-INSPIRE Grant DST/INSPIRE/04/2018/002208, funded by the Government of India.

\end{document}